\documentclass[12pts]{amsart}

\usepackage{amscd,latexsym,amsthm,amsfonts,amssymb,amsmath,amsxtra}
\usepackage[mathscr]{eucal}
\pagestyle{plain} \setcounter{secnumdepth}{2}

\newcommand{\BA}{{\mathbb {A}}}

\newcommand{\BC}{{\mathbb {C}}}

\newcommand{\BQ}{{\mathbb {Q}}}
\newcommand{\BR}{{\mathbb {R}}}

\newcommand{\RX}{{\mathrm {X}}}
\newcommand{\RY}{{\mathrm {Y}}}
\newcommand{\RZ}{{\mathrm {Z}}}

\newcommand{\diag}{{\mathrm{diag}}}

\newcommand{\GL}{{\mathrm{GL}}}

\newcommand{\I}{{\mathrm{I}}}

\renewcommand{\Re}{{\mathrm{Re}}}

\newcommand{\SL}{{\mathrm{SL}}}

\newcommand{\Sp}{{\mathrm{Sp}}}

\newtheorem{thm}{Theorem}[section]

\newtheorem{lem}[thm]{Lemma}
\newtheorem{prop}[thm]{Proposition}

\newtheorem{rmk}[thm]{Remark}




\begin{document}
\renewcommand{\theequation}{\arabic{equation}}
\numberwithin{equation}{section}

\title{Poles of Eisenstein series on general linear groups, induced from two Speh representations}

\author{David Ginzburg}

\address{School of Mathematical Sciences, Sackler Faculty of Exact Sciences, Tel-Aviv University, Israel
69978} \email{ginzburg@tauex.tau.ac.il}

\thanks{This research was supported by the ISRAEL SCIENCE FOUNDATION
	(grant No. 461/18).}

\author{David Soudry}
\address{School of Mathematical Sciences, Sackler Faculty of Exact Sciences, Tel-Aviv University, Israel
69978} \email{soudry@tauex.tau.ac.il}




\keywords{Eisenstein series, Speh representations, Poles, Nilpotent orbits, Descent}

\maketitle

\begin{abstract}
We determine the poles of the Eisenstein series on $\GL_{n(m_1+m_2)}(\BA)$, induced from two Speh representations, $\Delta(\tau,m_1)|\cdot|^s\times \Delta(\tau,m_2)|\cdot|^{-s}$, $Re(s)\geq 0$, where $\tau$ is an irreducible, unitary, cuspidal, automorphic representation of $\GL_n(\BA)$. These poles are simple and occur at $s=\frac{m_1+m_2}{4}-\frac{i}{2}, \ \ 0\leq i\leq min(m_1,m_2)-1$. Our methods also show that, when $m_1=m_2$, the above Eisenstein series vanish at $s=0$.
\end{abstract}

\section{Introduction}

Let $\tau$ be an irreducible, unitary, cuspidal, automorphic representation of $\GL_n(\BA)$, where $\BA$ is the ring of adeles of a numbr field $F$. Let $m_1, m_2$ be two positive integers. In this paper, we consider Eisenstein series on $\GL_{n(m_1+m_2)}(\BA)$, attached to the parabolic induction
$$
\rho_{\Delta(\tau,(m_1,m_2)),s}=\Delta(\tau,m_1)|\cdot|^s\times \Delta(\tau,m_2)|\cdot|^{-s},
$$
where $\Delta(\tau,r)$ denotes the Speh representation of $\GL_{nr}(\BA)$, corresponding to $\tau$. This is the irreducible, automorphic representation of $\GL_{nr}(\BA)$ generated by the multi-residues of Eisenstein series attached to $\tau|\cdot|^{\zeta_1}\times\cdots\times \tau|\cdot|^{\zeta_r}$, at the point $(\frac{r-1}{2},\frac{r-3}{2},...,\frac{1-r}{2})$. Such representations generate the discrete, non-cuspidal spectrum of $L^2(\GL_{nr}(F)\backslash \GL_{nr}(\BA))$. See the paper of Moeglin and Waldspurger \cite{MW89}.
The following theorem is stated by Z.Zhang, in \cite{Zh23}, for $F=\BQ$.
\begin{thm}\label{thm A}
	The list of poles of the Eisenstein series attached to $\rho_{\Delta(\tau,(m_1,m_2)),s}$, for $Re(s)\geq 0$, is
	$$
	s(i)=\frac{m_1+m_2}{4}-\frac{i}{2}, \ \ 0\leq i\leq min(m_1,m_2)-1.
	$$
	All these poles are simple.
\end{thm}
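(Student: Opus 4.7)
The plan is to reduce the pole analysis to the analytic behavior of the Rankin-Selberg $L$-function $L(s,\tau\times\tau^\vee)$, and then to verify that each putative simple pole is attained by producing a non-vanishing Fourier coefficient of the residue. First, I would take the constant term of the Eisenstein series along the inducing parabolic $P$, whose Levi is $\GL_{nm_1}\times\GL_{nm_2}$. Since $P$ is maximal and the inducing data is irreducible on each block, this constant term has only two summands: the identity section, which is entire, and its image under the intertwining operator $M(w_0,s)$ for the unique non-trivial Weyl element $w_0\in W(M,M)$ that swaps the two blocks. All poles of the Eisenstein series therefore come from $M(w_0,s)$.

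By the Shahidi-type normalization, extended to Speh representations by Moeglin-Waldspurger and others, $M(w_0,s)$ factors as a normalized operator $N(w_0,s)$ times the ratio
\begin{equation*}
\frac{L^S(2s,\Delta(\tau,m_1)\times\Delta(\tau,m_2)^\vee)}{L^S(1+2s,\Delta(\tau,m_1)\times\Delta(\tau,m_2)^\vee)},
\end{equation*}
with $N(w_0,s)$ holomorphic on $\Re(s)\geq 0$. Using the Langlands parameter of a Speh representation,
\begin{equation*}
L^S\bigl(z,\Delta(\tau,m_1)\times\Delta(\tau,m_2)^\vee\bigr)=\prod_{i=0}^{m_1-1}\prod_{j=0}^{m_2-1} L^S\bigl(z+\tfrac{m_1-1}{2}-i-\tfrac{m_2-1}{2}+j,\,\tau\times\tau^\vee\bigr).
\end{equation*}
Since $L^S(z,\tau\times\tau^\vee)$ has a simple pole at $z=1$ and is otherwise holomorphic and non-vanishing in $\Re(z)\geq 1$, a direct count of the lattice points $(i,j)$ for which the shifted argument equals $1$ shows that the only candidate poles of this quotient in $\Re(s)\geq 0$ are exactly the $s(k)$ with $0\leq k\leq \min(m_1,m_2)-1$, and that at each $s(k)$ the numerator has a pole of order exactly $k+1$ while the denominator has a pole of order exactly $k$. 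The quotient therefore has a simple pole at each $s(k)$ and no other poles in $\Re(s)\geq 0$.

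To confirm that each simple pole is actually attained, I would produce a non-vanishing Fourier coefficient of the corresponding residual representation. For the top pole $s(0)$, the expected residue is the Speh representation $\Delta(\tau,m_1+m_2)$, whose Fourier coefficient attached to the partition $((m_1+m_2)^n)$ is non-zero by standard results on Speh representations. For $s(k)$ with $k\geq 1$, the residue is a non-square-integrable isobaric subquotient whose expected associated nilpotent orbit corresponds to the partition $((m_1+m_2-k)^n,k^n)$. Unfolding the Eisenstein integral along the unipotent attached to this orbit, and applying the descent machinery of Ginzburg-Rallis-Soudry, should reduce non-vanishing at $s(k)$ to the non-vanishing of a Fourier coefficient on a single Speh representation of strictly smaller rank. \emph{The main obstacle} is this last step: the $L$-function analysis pins down the location and order of the poles of $M(w_0,s)$ cleanly, but confirming that each residue is actually non-zero requires identifying the correct nilpotent orbit attached to it, organizing the unfolding so that the Eisenstein integral collapses properly, and tracking the descent down to a manageable base case --- the technical heart of the argument, where the techniques of nilpotent orbits and descent highlighted in the introduction come into play.
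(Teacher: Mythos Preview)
Your argument has a gap at the very first step. The constant term of $E(f_{\Delta(\tau,(m_1,m_2)),s})$ along the maximal parabolic $P=P_{nm_1,nm_2}$ does \emph{not} reduce to two summands, because the inducing data $\Delta(\tau,m_1)\otimes\Delta(\tau,m_2)$ is not cuspidal on the Levi. The Bruhat decomposition $P\backslash G/P$ has $\min(nm_1,nm_2)+1$ cells, and for each intermediate cell $w$ the corresponding term in $E_P$ involves a constant term of $\Delta(\tau,m_i)$ along a proper parabolic of $\GL_{nm_i}$; these are nonzero (they produce $\Delta(\tau,a)\otimes\Delta(\tau,m_i-a)$ up to twist) and carry their own intertwining integrals whose poles must be accounted for. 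The two-term formula you quote is valid only for cuspidal data. This is precisely why the paper computes the constant term along the cuspidal-support parabolic $P_{n^m}$ instead (Section~4), where the data is $\tau^{\otimes m}$ and the standard formula applies cleanly, at the price of a large sum indexed by the pairs $(\underline a,\underline b)$ of \eqref{3.2}--\eqref{3.10}.

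Even if one isolates the single term $M(w_0,s)f_s$, your assertion that the normalized operator $N(w_0,s)$ is holomorphic on all of $\Re(s)\geq 0$ for non-tempered inducing data is exactly the point the introduction flags as unjustified in prior work. When one passes to the cuspidal picture, the result you cite (\cite{MW89}, I.10) yields holomorphy only for $\Re(s)\geq\frac{m-2}{4}$ (see the paragraph after \eqref{3.10}), covering $s(0)$ but not the deeper poles. The paper's architecture is designed to avoid this: existence of the pole at each $s(i)$ is obtained by applying the descent $\mathcal D_{\psi,2n}$, which turns $E(f_{\Delta(\tau,(m_1,m_2)),s})$ into an Eisenstein series attached to $\Delta(\tau,m_1-1)\times\Delta(\tau,m_2-1)$ with a section holomorphic in the relevant range (Theorems~\ref{thm 2.2}--\ref{thm 2.3}, Proposition~\ref{prop 2.4}), and one inducts on $m$ (Theorem~\ref{thm 4.1}). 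Simplicity and exhaustivity are then proved by a completely different mechanism: if the leading term $\pi$ at a putative pole satisfies $\mathcal D_{\psi,2n}(\pi)=0$, one shows $\mathcal O(\pi)=(n^m)$ (Proposition~\ref{prop 4.3}), transfers the resulting $\SL_m(\BA)$-invariance of $\mathcal F_{\psi_{(n^m)}}$ to the constant-term expansion via \eqref{1.60}, and reads off from the characters $\chi_{\underline a,\underline b,s_0}$ that the only possibility is $s_0=\frac{m}{4}$ (Theorem~\ref{thm 4.2}). Your second half, which you correctly identify as the hard part, is handled in the paper not by identifying a Fourier coefficient of the residue directly, but as a byproduct of this same descent-and-induction scheme.
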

We remark that the points $s(i)$ are exactly the positive numbers $s$, such that the segments $[\frac{1-m_2}{2}-s,\frac{m_2-1}{2}-s]$ and $[\frac{1-m_1}{2}+s,\frac{m_1-1}{2}+s]$ are linked, in the sense of Zelevinsky, \cite{Ze80}, p. 184. Thus, the local analog of Theorem \ref{thm A}, for a local non-archimedean field $K$ and an irreducible, supercuspidal representation $\sigma$ of $\GL_n(K)$, is the fact that, for $Re(s)\geq 0$, the parabolic induction $\Delta(\sigma,m_1)|\cdot|^s\times \Delta(\sigma,m_2)|\cdot|^{-s}$ is irreducible iff $s=s(i)$, $0\leq i\leq min(m_1,m_2)-1$, where $\Delta(\sigma,r)$ is the unique irreducible quotient of the parabolic induction $\sigma|\cdot|^{\frac{r-1}{2}}\times\sigma|\cdot|^{\frac{r-3}{2}}\times\cdots\times\sigma|\cdot|^{\frac{1-r}{2}}$. See Theorem 4.2 in \cite{Ze80}.

In the case $n=1$, Theorem \ref{thm A} was proved by Hanzer and Muic in \cite{HM15}. The proof in \cite{Zh23} follows the combinatoric method of \cite{HM15}, but is problematic, as there is not enough attention paid to poles which might arise from local intertwining operators. 

In this paper, we present a completely different proof, valid for any number field $F$. Let $f_{\Delta(\tau,(m_1,m_2)),s}$ be a smooth holomorphic section of $\rho_{\Delta(\tau,(m_1,m_2)),s}$, and denote the corresponding Eisenstein series by $E(f_{\Delta(\tau,(m_1,m_2)),s})$. We first show in Theorem \ref{thm 3.1}, that $s=\frac{m_1+m_2}{4}$ is a simple pole of our Eisenstein series, and that it is its right-most pole. The proof is straightforward, by considering the constant term of $E(f_{\Delta(\tau,(m_1,m_2)),s})$, along the unipotent radical of the parabolic subgroup corresponding to the partition $(n^{m_1+m_2})$. Next, we show that, for $1\leq i\leq min(m_1,m_2)-1$, $s(i)$ is a pole of $E(f_{\Delta(\tau,(m_1,m_2)),s})$ (for some section). We prove this in Theorem \ref{thm 4.1}, by applying to $E(f_{\Delta(\tau,(m_1,m_2)),s})$ a Fourier coefficient analogous to the Bernstein-Zelevinsky derivative (of order $2n$), stabilized by $\GL_{n(m_1+m_2-2)}(\BA)$. We denote the application of this Fourier coefficient by $\mathcal{D}_{\psi,2n}$. For an automorphic representation $\pi$ of $\GL_{n(m_1+m_2)}(\BA)$, we view $\mathcal{D}_{\psi,2n}(\pi)$ (by restriction) as an automorphic representation of the stabilizer $\GL_{n(m_1+m_2-2)}(\BA)$. ($\psi$ is a fixed nontrivial character of $F\backslash \BA$). It turns out that $\mathcal{D}_{\psi,2n}(E(f_{\Delta(\tau,(m_1,m_2)),s}))$ is, up to a power of $|\det\cdot|$, an Eisenstein series corresponding to $\rho_{\Delta(\tau,(m_1-1,m_2-1)),s}$. This is a special case of Theorem \ref{thm 2.2}. This property is a global analog of the computation of derivatives applied to parabolic induction of segment representations. See Remark \ref{rmk 2.2}. Now, we apply induction on $m_1+m_2$ (and the results of Sec. 3.3, 3.4). We note that we have used a similar idea in \cite{GS22}, where we applied a descent operation to an Eisenstein series on $\Sp_{2nm}(\BA)$, induced from $\Delta(\tau,m)$ and the Siegel parabolic subgroup. It gave an Eisenstein series on $\Sp_{2n(m-1)}(\BA)$, induced from $\Delta(\tau,m-1)$. Thus, poles for the last case must come from poles of the former case. In this paper, $\mathcal{D}_{\psi,2n}$ provides a descent operation, taking an Eisenstein series attached to $\rho_{\Delta(\tau,(m_1,m_2)),s}$ to one attached to $\rho_{\Delta(\tau,(m_1-1,m_2-1)),s}$. We remark that Zhang considered the derivative-type coefficient above, as well as others, in another paper, \cite{Zh22}, and proved the descent property above. However, he did not consider poles that might arise from the section of the new Eisenstein series, nor the problem of how general this section is. These issues are taken care of in Sec. 3.3, 3.4. In \cite{Zh22}, Zhang applied the derivative-type coefficient above to the residue at the simple pole $s(i)$ (whose existence he knew from \cite{Zh23}), in order to find the wave front set of the automorphic representation $\pi_i$, generated by the residues at $s(i)$ of the Eisenstein series above. In this paper, we use $\mathcal{D}_{\psi,2n}$ in order to prove by induction, that $s(i)$, $1\leq min(m_1,m_2)-1$, is a pole of the Eisenstein series above. 

Denote $m=m_1+m_2$. What we explained so far, proves that the points $s(i)$, $0\leq i\leq min(m_1,m_2)-1$, are poles of the Eisenstein series attached to $\rho_{\Delta(\tau,(m_1,m_2)),s}$, for $Re(s)\geq 0$, that $s(0)$ is a simple pole, and that it is the right-most pole. The next part of the proof of Theorem \ref{thm A} consists of showing that all the poles $s(i)$ are simple, and that there are no other poles when $Re(s)\geq 0$. This is done in Sec. 5.2. The idea here is to consider automorphic representations $\pi$ of $\GL_{mn}(\BA)$, generated by the leading terms of the Laurent expansions of our Eisenstein series, at a pole $s_0$, such that $\mathcal{D}_{\psi,2n}(\pi)=0$. We prove in Prop. \ref{prop 4.3}, that the set of maximal nilpotent orbits, whose corresponding Fourier coefficients are nontrivial on $\pi$, $\mathcal{O}(\pi)$, is a singleton and corresponds to the partition $(n^m)$. In such a case, we find from Theorem \ref{thm 1.6}, an explicit relation between two Fourier coefficients applied to $\pi$. The first is the Fourier coefficient attached to the partition $(n^m)$, $\mathcal{F}_{\psi_{(n^m)}}$. The second is the Fourier coefficient, $\mathcal{Z}_{\psi, n,m}$, obtained by first taking a constant term along the unipotent radical of the parabolic subgroup $P_{n^m}$ corresponding to the partition $(n^m)$, and then applying the Whittaker coefficient, with respect to $\psi$, along the standard maximal unipotent subgroup of the Levi part. This explicit passage between the two Fourier coefficients is done by the method of exchanging roots. Next, we use the following invariance property from Theorem \ref{thm 1.7},
$$
\mathcal{F}_{\psi_{(n^m)}}(\varphi)(\delta_n(a)g)=\mathcal{F}_{\psi_{(n^m)}}(\varphi)(g),
$$
where $\varphi\in \pi$, $a\in \SL_m(\BA)$, $\delta_n(a)=\diag(a,...,a)$ ($n$ times). Transferring this invariance to $\mathcal{Z}_{\psi, n,m}(\varphi)$ forces $s_0$ to be $\frac{m}{4}$. It is interesting that these ideas can be used to show
\begin{thm}\label{thm B}
	Assume that $m_1=m_2=k$. Then the value of any Eisenstein series attached to $\rho_{\Delta(\tau,(k,k)),s}$, at $s=0$, is zero.
\end{thm}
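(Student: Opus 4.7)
The plan is to mirror the argument of Sec.~5.2 (where extraneous poles of $E(f_{\Delta(\tau,(m_1,m_2)),s})$ are ruled out) by running it at the regular point $s_0=0$. Suppose, for contradiction, that $E(f_{\Delta(\tau,(k,k)),s})$ does not vanish at $s=0$ for some smooth holomorphic section, and let $\pi$ denote the automorphic representation of $\GL_{2nk}(\BA)$ generated by such values. I would proceed by induction on $k$.

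For $k\geq 2$, apply the descent $\mathcal{D}_{\psi,2n}$. By Theorem~\ref{thm 2.2}, $\mathcal{D}_{\psi,2n}(E(f_{\Delta(\tau,(k,k)),s}))$ is, up to a power of $|\det\cdot|$, an Eisenstein series attached to $\rho_{\Delta(\tau,(k-1,k-1)),s}$. Evaluated at $s=0$, this vanishes by the inductive hypothesis, so $\mathcal{D}_{\psi,2n}(\pi)=0$. By Proposition~\ref{prop 4.3}, this forces $\mathcal{O}(\pi)=\{(n^{2k})\}$. Theorem~\ref{thm 1.6} then supplies the explicit identity relating the Fourier coefficient $\mathcal{F}_{\psi_{(n^{2k})}}$ to the coefficient $\mathcal{Z}_{\psi,n,2k}$ (constant term along the unipotent radical of $P_{n^{2k}}$, followed by the standard Whittaker integration on the Levi), while Theorem~\ref{thm 1.7} supplies the invariance
$$
\mathcal{F}_{\psi_{(n^{2k})}}(\varphi)(\delta_n(a)g)=\mathcal{F}_{\psi_{(n^{2k})}}(\varphi)(g),\qquad a\in \SL_{2k}(\BA),\ \varphi\in\pi.
$$
Transferring this $\SL_{2k}(\BA)$-invariance through the identity of Theorem~\ref{thm 1.6} to $\mathcal{Z}_{\psi,n,2k}(\varphi)$, and expanding $\mathcal{Z}_{\psi,n,2k}$ against the known cuspidal exponents of the inducing data of the Eisenstein series, the same computation as in Sec.~5.2 forces $s_0=\tfrac{2k}{4}=\tfrac{k}{2}$. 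Since $s_0=0\neq k/2$, we obtain $\pi=0$, contradicting our assumption.

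The main obstacle is the base case $k=1$: the descent $\mathcal{D}_{\psi,2n}$ lands on $\GL_0(\BA)$ and provides no reduction. There I would run the Thm~\ref{thm 1.6}/Thm~\ref{thm 1.7} machinery directly on $\GL_{2n}(\BA)$: once again, if $E(f,0)\not\equiv 0$, Proposition~\ref{prop 4.3} shows that the resulting $\pi$ is supported on $(n^2)$, and the same invariance argument transferred to $\mathcal{Z}_{\psi,n,2}(\varphi)$ is incompatible with $s_0=0$ (the required parameter being $\tfrac{1}{2}$). A secondary technical point is to check carefully that the power of $|\det\cdot|$ introduced in the descent of Theorem~\ref{thm 2.2} does not shift the evaluation point away from $s=0$, so the inductive step is actually at the same parameter.
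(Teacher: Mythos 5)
Your inductive step ($k\geq 2$) is essentially the paper's argument: descend via $\mathcal{D}_{\psi,2n}$, use the inductive hypothesis to get $\mathcal{D}_{\psi,2n}(\pi)=0$, invoke Proposition \ref{prop 4.3} to get $\mathcal{O}(\pi)=(n^{2k})$, and then run the Theorem \ref{thm 1.6}/Theorem \ref{thm 1.7} invariance argument against the cuspidal exponents to derive a contradiction at $s_0=0$ (in the paper this appears as: all the characters $\chi_{\underline{a},\underline{b},0}$ are nontrivial, so by linear independence of characters $\mathcal{F}_{\psi_{n^{2k}}}$ vanishes on $\pi$, contradicting $\mathcal{O}(\pi)=(n^{2k})$). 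Your worry about the $|\det\cdot|^{\frac{2n-1}{2}}$ twist is harmless, since it does not shift the parameter $s$.

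The genuine gap is your base case $k=1$. There the machinery you propose cannot be run: on $\GL_{2n}(\BA)$ the coefficient $\mathcal{D}_{\psi,2n}$ is integration over the full maximal unipotent subgroup $Z_{2n}$ against the generic Whittaker character, and at $s=0$ the induced representation $\tau\times\tau$ is irreducible and generic (as $\tau$ is cuspidal), so if $E(f,0)\not\equiv 0$ then $\pi\cong\tau\times\tau$ is generic and $\mathcal{D}_{\psi,2n}(\pi)\neq 0$. Hence the hypothesis of Proposition \ref{prop 4.3} fails; in fact $\mathcal{O}(\pi)=(2n)$, not $(n^2)$, and no contradiction with the $\SL_2(\BA)$-invariance of $\mathcal{F}_{\psi_{(n^2)}}$ can be extracted. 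The paper settles $k=1$ by a completely different mechanism: the Keys--Shahidi result (\cite{KS88}, Prop.\ 6.3) that the intertwining operator satisfies $M_0=-\mathrm{id}$, combined with the functional equation $E(f_s)=E(M_s(f_s))$, gives $E(f_0)=-E(f_0)=0$. Without some such external input at $k=1$, your induction has no starting point.
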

This is Theorem \ref{thm 5.1}. The case $k=1$ follows from a result of Keys and Shahidi (Prop. 6.3, \cite{KS88}). Finally, let $\pi_i$ denote the representation of $\GL_{mn}(\BA)$ by right translations, in the space generated by the residues $Res_{s=\frac{m}{4}-\frac{i}{2}}E(f_{\Delta(\tau,(m_1,m_2)),s})$, $0\leq min(m_1,m_2)-1$. Now that we know that $\mathcal{D}_{\psi,2n}(\pi_i)\neq 0$, for $1\leq min(m_1,m_2)-1$, and the results of Sec. 3.3, 3.4, we use induction on $m$ to show
\begin{thm}\label{thm C}
Let $0\leq i\leq min(m_1,m_2)-1$. Then
$$
\mathcal{O}(\pi_i)=((2n)^i,n^{m-2i}).
$$
\end{thm}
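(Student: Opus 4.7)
\emph{Proof plan.}
The plan is induction on $m = m_1 + m_2$, simultaneously over $i$. For the base case $i = 0$: apply Theorem \ref{thm 2.2} to transform $E(f_{\Delta(\tau,(m_1,m_2)),s})$ into (a $|\det\cdot|$-twist of) an Eisenstein series attached to $\rho_{\Delta(\tau,(m_1-1,m_2-1)),s}$. The right-most pole of the latter is $\tfrac{m-2}{4} < \tfrac{m}{4} = s(0)$, so residues at $s(0)$ vanish, forcing $\mathcal{D}_{\psi,2n}(\pi_0) = 0$. Proposition \ref{prop 4.3} then yields $\mathcal{O}(\pi_0) = \{(n^m)\} = \{((2n)^0, n^m)\}$, matching the formula.

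For the inductive step, fix $1 \le i \le \min(m_1,m_2)-1$ and assume the theorem for all smaller $m$. Using Theorem \ref{thm 2.2} together with the section-genericity results of Sec. 3.3, 3.4, the descent $\mathcal{D}_{\psi,2n}(\pi_i)$ is identified (up to a $|\det\cdot|$-twist) with the analogous residual representation $\pi'_{i-1}$ on $\GL_{n(m-2)}(\BA)$, built from $\rho_{\Delta(\tau,(m_1-1,m_2-1)),s}$ at the pole $s'(i-1) = \tfrac{m-2}{4} - \tfrac{i-1}{2} = s(i)$. Since $0 \le i-1 \le \min(m_1-1,m_2-1)-1$, the induction hypothesis yields $\mathcal{O}(\pi'_{i-1}) = \{((2n)^{i-1}, n^{m-2i})\}$. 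Since $\mathcal{D}_{\psi,2n}$ is itself a Fourier coefficient associated to a single $2n$-block (Bernstein--Zelevinsky derivative of order $2n$), composing it with the $((2n)^{i-1}, n^{m-2i})$-coefficient on $\mathcal{D}_{\psi,2n}(\pi_i)$ produces a nonvanishing $((2n)^i, n^{m-2i})$-coefficient on $\pi_i$, establishing the lower bound. For the upper bound, let $\mathcal{O}$ be any partition of $nm$ supporting a nonzero Fourier coefficient on $\pi_i$; the $\tau$-isotypic structure forces every part of $\mathcal{O}$ to be a multiple of $n$. If $\mathcal{O}$ has a part of size $\ge 2n$, a root-exchange argument in the spirit of Theorem \ref{thm 1.6} extracts a $2n$-block to yield a partition $\mathcal{O}'$ of $n(m-2)$ on which $\mathcal{D}_{\psi,2n}(\pi_i)$ supports a nonzero coefficient; by induction, $\mathcal{O}' \le ((2n)^{i-1}, n^{m-2i})$, hence $\mathcal{O} \le ((2n)^i, n^{m-2i})$. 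If every part of $\mathcal{O}$ is at most $n$, then $\mathcal{O} = (n^m) \le ((2n)^i, n^{m-2i})$. Combined with the lower bound, this gives $\mathcal{O}(\pi_i) = \{((2n)^i, n^{m-2i})\}$.

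The main technical obstacle is the root-exchange extraction of a $2n$-block from an arbitrary partition with a part of size $\ge 2n$, and the verification that the extraction yields a bona fide Fourier coefficient on $\mathcal{D}_{\psi,2n}(\pi_i)$ indexed by a well-defined partition $\mathcal{O}'$ of $n(m-2)$. This parallels Theorem \ref{thm 1.6} and the manipulations of Sec. 5.2, but must be adapted to partitions other than $(n^m)$, and one needs to rule out spurious cancellations. Equally, the restriction that the parts of all supported partitions are multiples of $n$ requires a careful global argument, grounded in the Moeglin--Waldspurger classification of the $\tau$-isotypic discrete spectrum and applied to the residues generating $\pi_i$.
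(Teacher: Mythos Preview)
Your lower-bound argument for $i\geq 1$ (descend via $\mathcal{D}_{\psi,2n}$, apply induction to $\pi'_{i-1}$, then lift back using a \cite{GRS03} Lemma 2.6-type step) matches the paper's approach. Your $i=0$ case is a bit different: the paper simply identifies $\pi_0$ with $|\det\cdot|^{\frac{m_1-m_2}{4}}\Delta(\tau,m)$ and quotes $\mathcal{O}(\Delta(\tau,m))=(n^m)$ from \cite{G06}, \cite{JL13}. Your route through Prop.~\ref{prop 4.3} is fine too, but note that you should also invoke Theorem~\ref{thm 2.3} (or Theorem~\ref{thm 2.7} when $m_1<m_2$) to be sure that the descended section is holomorphic at $s=\frac{m}{4}$, so that $\mathcal{D}_{\psi,2n}(\pi_0)=0$ really follows.

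The genuine gap is your upper bound. The paper does \emph{not} use any global root-exchange extraction of a $2n$-block from an arbitrary partition; instead it gives a short local argument. At almost every place $\nu$ where $\tau$ is unramified, the unramified constituent of $\Delta(\tau_\nu,m_1)|\cdot|^{s(i)}\times\Delta(\tau_\nu,m_2)|\cdot|^{-s(i)}$ coincides with that of $|\det\cdot|^{\frac{m_1-m_2}{4}}\big(\Delta(\tau_\nu,m-i)\times\Delta(\tau_\nu,i)|\cdot|^{\frac{m_2-m_1}{2}}\big)$, and by a standard argument (e.g.\ \cite{LX20}, Prop.~5.1) this already forces every element of $\mathcal{O}(\pi_i)$ to be $\leq ((2n)^i,n^{m-2i})$. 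Your proposed extraction, by contrast, would need a statement of the form ``if $\mathcal{F}_{\psi_{\mathcal{O}}}$ is nonzero on $\pi_i$ and $\mathcal{O}$ has a part $2n$, then $\mathcal{F}_{\psi_{\mathcal{O}'}}$ is nonzero on $\mathcal{D}_{\psi,2n}(\pi_i)$ for some specific $\mathcal{O}'$ with $\mathcal{O}\leq (2n)\cup\mathcal{O}'$''. This is not what \cite{GRS03} Lemma 2.6 or Theorem~\ref{thm 1.6} give (those go in the opposite direction, building up rather than peeling off), and establishing it in the generality you need would be substantial new work. Likewise, your ``all parts are multiples of $n$'' step is neither proved nor needed once you use the local bound. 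I recommend replacing your upper-bound strategy with the paper's one-line unramified comparison.
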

Here, the proof for $i=0$ (and any $m$) is straightforward.	The proof by induction on $m$, for $1\leq min(m_1,m_2)-1$, is similar to \cite{Zh22}, once we have the results of Sec. 3.3, 3.4. Again, we used a similar idea in \cite{GS22}.
\vspace{0.5cm}

\noindent {\bf Some notation.} Let $\underline{k}=(k_1,...,k_r)$ be partition $\ell$, i.e. $k_1,...,k_r$ are positive integers, whose sum is $\ell$. Let $F$ be a field. We denote by $P_{\underline{k}}(F)$ the standard parabolic subgroup of $\GL_\ell(F)$, whose Levi part is
$$
M_{\underline{k}}(F)=\{\diag(g_1,...,g_r):\ g_1\in \GL_{k_1}(F),...,g_r\in \GL_{k_r}(F)\}.
$$
We denote the unipotent radical by $V_{\underline{k}}(F)$. Oftentimes, when $\underline{k}$ is explicit, we omit the parentheses and denote $P_{k_1,...,k_r}$, $M_{k_1,...,k_r}$, $V_{k_1,...,k_r}$.\\
We denote, for positive integers $k, r$, by $(k^r)$, the partition $(k,...,k)$, $k$ appearing $r$ times. Thus, $V_{1^r}(F)$ is the standard maximal unipotent subgroup of $\GL_r(F)$. We usually also denote
$$
Z_r(F)=V_{1^r}(F).
$$
Let $F$ be a number field and $\BA$ its ring of adeles. Throughout the paper, $\psi$ denotes a fixed nontrivial character of $F\backslash \BA$.\\
For $\underline{k}$ as above, let $\sigma_i$ be an automorphic representation of $\GL_{k_i}(\BA)$, $1\leq i\leq r$. Extend $\sigma_1\otimes\cdots\otimes\sigma_r$ from $M_{\underline{k}}(\BA)$ to $P_{\underline{k}}(\BA)$, by the trivial action of $V_{\underline{k}}(\BA)$. We denote the induction of this representation from $P_{\underline{k}}(\BA)$ to $\GL_\ell(\BA)$ by
$$
\sigma_1\times\cdots\times\sigma_r.
$$
We use similar notation over a local field. Finally, for an algebraic group $G$ over a number field $F$, we denote $$
[G]=G(F)\backslash G(\BA).
$$

\section{Nilpotent orbits and Fourier coefficients}

In this section, we recall the definition of Fourier coefficients with respect to characters of unipotent groups, attached to nilpotent orbits. We restrict to general linear groups. We follow \cite{GRS03}. We then study two examples: Fourier coefficients attached to the partition $(k,1^{\ell-k})$ of $\ell$ and Fourier coefficients attached to the partition $(k^r)$ of $kr$.

\subsection{Unipotent groups and characters attached to partitions}  Let $F$ be a field. Let $\ell$ be a positive integer and consider the group $\GL_\ell(F)$ acting by conjugation on its Lie algebra, which we identify with the $\ell\times \ell$ matrices, $M_\ell(F)$. Using the Jordan form of a matrix, we know that nilpotent orbits in $M_\ell(F)$ are determined by descending partitions of $\ell$, $\underline{p}=(p_1,...,p_r)$, $p_1\geq\dots\geq p_r\geq 1$. We will associate to $\underline{p}$ a one parameter subgroup $h_{\underline{p}}(t)$, inside the diagonal subgroup of $\GL_\ell(F)$, and an upper unipotent subgroup $N_{\underline{p}}(F)\subset GL_\ell(F)$, which is a unipotent radical of a standard parabolic subgroup.

Start with a partition of $\ell$,
\begin{equation}\label{1.1}
\underline{p}=((2k)^{m_k},(2k-1)^{n_k},(2k-2)^{m_{k-1}-m_k},(2k-3)^{n_{k-1}-n_k},...,2^{m_1-m_2},1^{n_1-n_2}),
\end{equation} 
where $m_1\geq\dots\geq m_k\geq 0$, $n_1\geq\dots\geq n_k\geq 0$, $m_k+n_k\geq 1$. In \eqref{1.1}, we mean that $2k$ is repeated $m_k$ times, $2k-1$ is repeated $n_k$ times etc. Note that $\ell=2(m_1+m_2+\cdots+m_k)+2(n_2+n_3+\cdots+n_k)+n_1$. We describe now how to form the torus element $h_{\underline{p}}(t)$. Consider first a partition of the form $(a^b)$ (of $ab$, for positive integers $a,b$). Denote, for $t\in F^*$,
\begin{equation}\label{1.2}
h_{a^b}(t)=\begin{pmatrix}t^{a-1}I_b\\&t^{a-3}I_b\\&&\ddots\\&&&t^{1-a}I_b\end{pmatrix}.
\end{equation}
Consider the diagonal matrix 
\begin{equation}\label{1.3}
\diag(h_{(2k)^{m_k}}(t),h_{(2k-1)^{n_k}}(t),h_{(2k-2)^{m_{k-1}-m_k}}(t),...,h_{2^{m-1-m_2}}(t),h_{1^{n_1-n_2}}(t)).
\end{equation}
The matrix $h_{\underline{p}}(t)$ is obtained by permuting the powers of $t$ in \eqref{1.3}, so that they are in descending order. Thus,
\begin{equation}\label{1.3.1}
h_{\underline{p}}(t)=\diag(...,t^{2i-1}I_{m_i},t^{2i-2}I_{n_i},...,tI_{m_1},I_{n_1},t^{-1}I_{m_1},...,t^{2-2i}I_{n_i},t^{1-2i}I_{m_i},...).
\end{equation}
Here, $2\leq i\leq k$. Consider the action by conjugation of $h_{\underline{p}}(t)$ on $M_\ell(F)$. Let $\{e_{i,j}: 1\leq i,j\leq\ell\}$ be the standard basis of $M_\ell(F)$. All its elements are eigenvectors for the action of $h_{\underline{p}}(t)$. For $1\leq i, j\leq \ell$, there is an integer $r_{i,j}$ that can be determined by $\underline{p}$, such that
$$
h_{\underline{p}}(t)e_{i,j}h_{\underline{p}}(t)^{-1}=t^{r_{i,j}}e_{i,j}.
$$
Let $N_{\underline{p}}(F)$ be the subgroup of $\GL_\ell(F)$, generated by the matrices $I_\ell+xe_{i,j}$, $x\in F$, $r_{i,j}\geq 1$. From \eqref{1.3.1}, this is the unipotent radical of the standard parabolic subgroup $P_{\underline{p}'}(F)$, corresponding to the partition
\begin{equation}\label{1.4}
\underline{p}'=(m_k,n_k,...,m_2,n_2,m_1,n_1,m_1,n_2,m_2,...,n_k,m_k).
\end{equation}
Thus, by our notation, $N_{\underline{p}}=V_{\underline{p}'}$. The elements of the Levi part $M_{\underline{p}'}(F)$ are
\begin{equation}\label{1.5}
\diag(g_1,h_1,...,g_{k-1},h_{k-1},g_k,h_k,g'_k,h'_{k-1},g'_{k-1},...,h'_1,g'_1),
\end{equation}
where, for $1\leq i\leq k$, $g_i, g'_i\in \GL_{m_{k-i+1}}(F)$, $h_k\in \GL_{n_1}(F)$, and, for $1\leq j\leq k-1$, $h_j, h'_j\in \GL_{n_{k-j+1}}(F)$.
If some $m_{k-j+1}=0$, then we drop the blocks $g_j, g'_j$, and similarly when $n_{k-j+1}=0$. Let $N^2_{\underline{p}}(F)$ be the subgroup of $N_{\underline{p}}(F)$, generated by the matrices $I_\ell+xe_{i,j}$, $x\in F$, $r_{i,j}\geq 2$. This is a normal subgroup of $N_{\underline{p}}(F)$. Note that $N^2_{\underline{p}}(F)=N_{\underline{p}}(F)$ iff all $m_i$ are zero, or all $n_j$ are zero. We have
\begin{equation}\label{1.6}
i_{\underline{p}}:N_{\underline{p}}(F)/N^2_{\underline{p}}(F)\cong \bigoplus_{i=1}^{k-1}(M_{m_{k-i+1}\times n_{k-i+1}}(F)\oplus
 M_{n_{k-i+1}\times m_{k-i}}(F))
 \end{equation}
 $$
 \oplus (M_{m_1\times n_1}(F)\oplus M_{n_1\times m_1}(F))\oplus (\bigoplus_{i=1}^{k-1}(M_{m_i\times n_{i+1}}(F)\oplus M_{n_{i+1}\times m_{i+1}}(F))),
$$
and  
\begin{equation}\label{1.7}
j_{\underline{p}}: N^2_{\underline{p}}(F)/[N^2_{\underline{p}}(F),N^2_{\underline{p}}(F)]\cong \bigoplus_{i=1}^{k-1}(M_{m_{k-i+1}\times m_{k-i}}(F)\oplus
M_{n_{k-i+1}\times n_{k-i}}(F))
\end{equation}
$$
\oplus M_{m_1\times m_1}(F)\oplus (\bigoplus_{i=1}^{k-1}(M_{n_i\times n_{i+1}}(F)\oplus M_{m_i\times m_{i+1}}(F))).
$$
The isomorphisms above can be described as follows. Let $x\in N_{\underline{p}}(F)$. Think of $x$ as a matrix of blocks following \eqref{1.4}. Then in \eqref{1.6}, 
$$
i_{\underline{p}}(xN^2_{\underline{p}}(F))= (x_{1,2},x_{2,3},...,x_{4k-2,4k-1}), $$
where $x_{1,2}\in M_{m_k\times n_k}(F)$, $x_{2,3}\in M_{n_k\times m_{k-1}}(F)$, and so on. Similarly, if $x\in N^2_{\underline{p}}(F)$, then  $$
j_{\underline{p}}(x[N^2_{\underline{p}}(F),N^2_{\underline{p}}(F)])=  (x_{1,3},x_{2,5},...,x_{4k-3,4k-1}),
$$
where $x_{1,3}\in M_{m_k\times m_{k-1}}(F)$, $x_{2,4}\in M_{n_k\times n_{k-1}}(F)$, and so on. A space $M_{a\times b}(F)$ which appears in the sums above, such that $ab=0$, should be dropped, and the corresponding coordinate should also be droped from the last descriptions of $i_{\underline{p}}$, or $j_{\underline{p}}$.

Let us describe the action by conjugation of the Levi part $M_{\underline{p'}}(F)$ (corresponding to the partition \eqref{1.4}) on the quotients \eqref{1.6}, \eqref{1.7}. Take the element \eqref{1.5}. Then in \eqref{1.6}, $(x,y)\in M_{m_{k-i+1}\times n_{k-i+1}}(F)\oplus
M_{n_{k-i+1}\times m_{k-i}}(F)$, $1\leq i\leq k-1$, is mapped by conjugation to 
$(g_ixh_i^{-1},h_iyg_{i+1}^{-1})$; an element $(x,y)\in M_{m_1\times n_1}(F)\oplus M_{n_1\times m_1}(F)$ is mapped to $(g_kxh_k^{-1},h_ky(g'_k)^{-1})$; an element $(x,y)\in M_{m_i\times n_{i+1}}(F)\oplus M_{n_{i+1}\times m_{i+1}}(F)$, $1\leq i\leq k-1$, is mapped to
 $(g'_{k-i+1}x(h'_{k-i})^{-1},h'_{k-i}y(g'_{k-i})^{-1})$. Similarly, in \eqref{1.7}, an element
  $(x,y)\in M_{m_{k-i+1}\times m_{k-i}}(F)\oplus
M_{n_{k-i+1}\times n_{k-i}}(F)$ is mapped to $(g_ixg_{i+1}^{-1},h_iyh_{i+1}^{-1})$; an element $x\in M_{m_1\times m_1}(F)$ is mapped to $g_kx(g'_k)^{-1}$; an element
 $(x,y)\in M_{n_i\times n_{i+1}}(F)\oplus M_{m_i\times m_{i+1}}(F)$ is mapped to\\ $(h'_{k-i+1}x(h'_{k-i})^{-1},g'_{k-i+1}y(g'_{k-i})^{-1})$. Here $h'_k=h_k$.
Again, in case of a summand $M_{a\times b}(F)$ which appears in \eqref{1.6}, or \eqref{1.7}, such that $ab=0$, the description of the conjugation above is modified appropriately.

Define the following homomorphism $b_{\underline{p}}: N^2_{\underline{p}}(F)\rightarrow F$. 
Since this homomorphism must be trivial on $[N^2_{\underline{p}}(F),N^2_{\underline{p}}(F)]$, we will use the isomorphism \eqref{1.7} and define our homomorphism on each summand in \eqref{1.7}. We keep denoting this homomorphism by $b_{\underline{p}}$. Let $1\leq i\leq k-1$ and $(x_{k-i},y_{k-i})\in M_{m_{k-i+1}\times m_{k-i}}(\BA)\oplus
M_{n_{k-i+1}\times n_{k-i}}(\BA)$. Recall that $m_{k-i+1}\leq m_{k-i}$, $n_{k-i+1}\leq n_{k-i}$. Then
\begin{equation}\label{1.8}
b_{\underline{p}}(x_{k-i},y_{k-i})=tr(x_{k-i}\begin{pmatrix}I_{m_{k-i+1}}\\0\end{pmatrix})+tr(y_{k-i}\begin{pmatrix}I_{n_{k-i+1}}\\0\end{pmatrix}).
\end{equation}
For $x_0\in M_{m_1\times m_1}(\BA)$,
\begin{equation}\label{1.9}
b_{\underline{p}}(x_0)=tr(x_0).
\end{equation}For $1\leq i\leq k-1$ and $(y'_i,x'_i)\in M_{n_i\times n_{i+1}}(\BA)\oplus M_{m_i\times m_{i+1}}(\BA)$,
\begin{equation}\label{1.10}
b_{\underline{p}}(y'_i,x'_i)=tr(y'_i(I_{n_{i+1}},0))+tr(x'_i(I_{m_{i+1}},0)).
\end{equation}
As an example, let $\underline{p}=(4,2^2,1)$, i.e. $k=2$, $m_2=1, n_2=0,m_1=2,n_1=1$. Then
$$
b_{\underline{p}}(\begin{pmatrix}1&x_1&\ast&\ast&\ast\\&I_2&0&x_0&\ast\\&&1&0&\ast\\&&&I_2&x'_1\\&&&&1\end{pmatrix})=x_1\begin{pmatrix}1\\0\end{pmatrix}+tr(x_0)+tr(x'_1(1,0)).
$$

There is a unique matrix $\alpha_{\underline{p}}\in M_{\ell}(F)$, such that
$$
h_{\underline{p}}(t)\alpha_{\underline{p}}h_{\underline{p}}(t)^{-1}=t^{-2}\alpha_{\underline{p}},
$$
and, for $x\in N^2_{\underline{p}}(F)$, 
\begin{equation}\label{1.11}
b_{\underline{p}}(x)=tr (\alpha_{\underline{p}}log(x))=tr(\alpha_{\underline{p}}(x-I_\ell)).
\end{equation}
Note that $\alpha_{\underline{p}}$ is lower nilpotent. In the example above,
$$
\alpha_{\underline{p}}=\begin{pmatrix}0\\ \begin{pmatrix}1\\0\end{pmatrix}&0_{2\times 2}\\&&0\\&I_2&&0_{2\times 2}\\&&&(1,0)&0\end{pmatrix}.
$$

The stabilizer $R_{\underline{p}}(F)$ of $b_{\underline{p}}$ inside $M_{\underline{p}'}(F)$ is the centralizer of $\alpha_{\underline{p}}$ inside $M_{\underline{p}'}(F)$. It is isomorphic to 
\begin{equation}\label{1.12}
\prod_{i=0}^{k-1}(\GL_{m_{k-i}-m_{k-i+1}}(F)\times \GL_{n_{k-i}-n_{k-i+1}}(F)).
\end{equation}
Here, $m_{k+1}=n_{k+1}=0$. Also, if in \eqref{1.11}, in one of the factors $\GL_a(F)$, $a=0$, then we drop this factor. This can be seen by using the description above of the action of $M_{\underline{p'}}(F)$ on the quotient \eqref{1.7}. The isomorphism in \eqref{1.12} is such that each $\GL$-factor in \eqref{1.12} is embedded diagonally in $M_{\underline{p}'}(F)$, as follows. Let $g\in \GL_{m_{k-i}-m_{k-i+1}}(F)$. We describe the image of $g$ in each block of $M_{\underline{p}'}(F)$. Each block corresponding to $n_j$ in \eqref{1.4} is $I_{n_j}$. For $k-i+1\leq j\leq k$, the block corresponding to $m_j$ in \eqref{1.4} is $I_{m_j}$. For $1\leq j\leq k-i$, the block corresponding to $m_j$ has the form $\diag(I_{m_{k-i+1}},g,I_{m_j-m_{k-i}})$. Similarly with the factors $\GL_{n_{k-i}-n_{k-i+1}}(F)$. For a nontrivial complex character $\psi$ of $F$, denote
\begin{equation}\label{1.12.1}
	\psi_{\underline{p}}=\psi\circ b_{\underline{p}}.
\end{equation}
This is a complex character of $N^2_{\underline{p}}(F)$. 

Assume that $N^2_{\underline{p}}(F)$ is strictly contained in $N_{\underline{p}}(F)$. This is equivalent to the fact that there exist nonzero $m_i,n_j$ in the partition $\underline{p}$. Then $N_{\underline{p}}(F)/Ker(b_{\underline{p}})$ is isomorphic to the Heisenberg group, over $F$, $\mathcal{H}_{\underline{p}}(F)$, in $2(\sum\limits_{i=1}^km_in_i+\sum\limits_{i=2}^km_{i-1}n_i)+1$ variables. Indeed, we have the following symplectic form on $N_{\underline{p}}(F)/N^2_{\underline{p}}(F)$, 
\begin{equation}\label{1.13}
(xN^2_{\underline{p}}(F),yN^2_{\underline{p}}(F))=tr(\alpha_{\underline{p}}[x-I_\ell,y-I_\ell]).
\end{equation}
Note that $R_{\underline{p}}(F)$ preserves the symplectic form \eqref{1.13}, and hence it embeds in the corresponding symplectic group. Here is a polarization of the above symplectic space:
\begin{equation}\label{1.14}
N_{\underline{p}}(F)/N^2_{\underline{p}}(F)=\mathcal{X}_{\underline{p}}\oplus \mathcal{Y}_{\underline{p}},
\end{equation}
where in terms of the isomorphism $i_{\underline{p}}$,
\begin{equation}\label{1.15}
\mathcal{X}_{\underline{p}}(F)=\left\{xN^2_{\underline{p}}(F): x_{2j,2j+1}=0,\ 1\leq j\leq 2k-1\right\},
\end{equation}
$$
\mathcal{Y}_{\underline{p}}(F)=\left\{xN^2_{\underline{p}}(F): x_{2j-1,2j}=0,\  1\leq j\leq 2k-1\right\}.
$$
The center of $N_{\underline{p}}(F)/[N^2_{\underline{p}}(F),N^2_{\underline{p}}(F)]$ is $N^2_{\underline{p}}(F)/[N^2_{\underline{p}}(F),N^2_{\underline{p}}(F)]$, and the center of $N_{\underline{p}}(F)/Ker(b_{\underline{p}})$ is $\RZ_{\underline{p}}(F)=N^2_{\underline{p}}(F)/Ker(b_{\underline{p}})\cong F$. We may identify $\mathcal{X}_{\underline{p}}(F)$, $\mathcal{Y}_{\underline{p}}(F)$ with the following subgroups of $N_{\underline{p}}(F)$,  
\begin{equation}\label{1.16}
\RX_{\underline{p}}(F)=\left\{x\in N_{\underline{p}}(F): x_{i',j'}=0, \forall i'<j' \text{not of the form}\ (2j-1,2j), 1\leq j\leq 2k-1\right\},
\end{equation}
$$
\RY_{\underline{p}}(F)=\left\{x\in N_{\underline{p}}(F): x_{i',j'}=0, \forall i'<j' \text{not of the form}\ (2j,2j+1), 1\leq j\leq 2k-1\right\}.
$$
These are indeed (abelian) subgroups. Recall that we treat the elements of $N_{\underline{p}}(F)$ as block matrices according to the partition $\underline{p}'$ in \eqref{1.4}. The quotient map from $N_{\underline{p}}(F)$ onto $N_{\underline{p}}(F)/Ker(b_{\underline{p}})$ is injective on $\RX_{\underline{p}}(F)$ and $\RY_{\underline{p}}(F)$. Note that these subgroups stabilize $b_{\underline{p}}$.

The isomorphism $\iota'_{\underline{p}}:N_{\underline{p}}(F)/Ker(b_{\underline{p}})\cong \mathcal{H}_{\underline{p}}(F)$ is such that, for $u\in \RX_{\underline{p}}(F)$, $v\in \RY_{\underline{p}}(F)$, $z\in \RZ_{\underline{p}}(F)$,
\begin{equation}\label{1.17}
\iota'_{\underline{p}}(u)=(i_{\underline{p}}(uN^2_{\underline{p}}(F));0),
\end{equation}
$$
\iota'_{\underline{p}}(v)=(i_{\underline{p}}(vN^2_{\underline{p}}(F));0),
$$
$$
\iota'_{\underline{p}}(z)=(0;b_{\underline{p}}(z)).
$$
Denote by $\iota_{\underline{p}}:N_{\underline{p}}(F)\rightarrow \mathcal{H}_{\underline{p}}(F)$ the composition of $\iota'_{\underline{p}}$ and the quotient map from $N_{\underline{p}}(F)$ to $N_{\underline{p}}(F)/Ker(b_{\underline{p}})$.

\subsection{Fourier coefficients} Assume that $F$ is a number field and let $\BA$ denote its ring of adeles. We recall the Fourier coefficients attached to the partition $\underline{p}$ (and hence to its corresponding nilpotent orbit). We will consider three kinds of such Fourier coefficients.
 
 Let $\psi$ be a nontrivial character of $F\backslash \BA$. Consider the  homomorphism $b_{\underline{p}}: N^2_{\underline{p}}(\BA)\rightarrow \BA$, obtained by applying $b_{\underline{p}}$ above at every completion $F_v$ of $F$, for every place $v$ of $F$. As before, denote by $\psi_{\underline{p}}$ the character $\psi\circ b_{\underline{p}}$ of $N^2_{\underline{p}}(\BA)$.

Let $\varphi$ be an automorphic function on $\GL_\ell(\BA)$. Define, for $g\in \GL_\ell(\BA)$,
\begin{equation}\label{1.18}
\mathcal{F}_{\psi_{\underline{p}}}(\varphi)(g)=\int_{[N^2_{\underline{p}}]}\varphi(vg)\psi^{-1}_{\underline{p}}(v)dv.
\end{equation}
Note that for $\gamma\in R_{\underline{p}}(F)$, $g\in \GL_\ell(\BA)$,
\begin{equation}\label{1.19}
\mathcal{F}_{\psi_{\underline{p}}}(\varphi)(\gamma g)=\mathcal{F}_{\psi_{\underline{p}}}(\varphi)(g).
\end{equation}

Assume that $N_{\underline{p}}(F)$ strictly contains $N^2_{\underline{p}}(F)$. Then we saw that $N_{\underline{p}}(F)/Ker(b_{\underline{p}})$ is isomorphic to the Heisenberg group $\mathcal{H}_{\underline{p}}(F)$, corresponding to the symplectic space of dimension $2\mathsf{n}_{\underline{p}}$ over $F$, where
$$
\mathsf{n}_{\underline{p}}=\sum\limits_{i=1}^km_in_i+\sum\limits_{i=2}^km_{i-1}n_i.
$$
Define
\begin{equation}\label{1.20}
\mathcal{F}^1_{\psi_{\underline{p}}}(\varphi)(g)=\int_{[\RX_{\underline{p}}]}\int_{[N^2_{\underline{p}}]}\varphi(vxg)\psi^{-1}_{\underline{p}}(v)dvdx.
\end{equation}
Let $\theta_{\psi,2\mathsf{n}_{\underline{p}}}^\phi$ be a theta series corresponding to $\psi$, attached to the Weil representation of the semidirect product of the adelic Heisenberg group above and the double cover of the corresponding symplectic group, $\widetilde{\Sp}_{2\mathsf{n}_{\underline{p}}}(\BA)\mathcal{H}_{\underline{p}}(\BA)$. Here, $\phi$ lies in the Schwarz space $\mathcal{S}(\RX_{\underline{p}})(\BA)$ on $\RX_{\underline{p}}(\BA)$. Recall that $R_{\underline{p}}$ is embedded inside $\Sp_{2\mathsf{n}_{\underline{p}}}$. Denote by $\tilde{R}_{\underline{p}}(\BA)$ the inverse image of $R_{\underline{p}}(\BA)$ inside $\widetilde{\Sp}_{2\mathsf{n}_{\underline{p}}}(\BA)$. Define, for $\tilde{h}\in \tilde{R}_{\underline{p}}(\BA)$, projecting to $h\in R_{\underline{p}}(\BA)$,
\begin{equation}\label{1.21}
\mathcal{F}^2_{\psi_{\underline{p}},\phi}(\varphi)(\tilde{h}) =\int_{[N_{\underline{p}}]}\theta_{\psi,2\mathsf{n}_{\underline{p}}}^\phi(\iota_{\underline{p}}(v)\tilde{h})\varphi(vh)(v)dv.
\end{equation}

\begin{lem}\label{lem 1.1}
	
Let $\pi$ be an automorphic representation of $\GL_\ell(\BA)$ and consider the Fourier coefficients \eqref{1.18}, \eqref{1.20} and the Fourier-Jacobi coefficients \eqref{1.21}, as $\varphi$ varies in (the space of) $\pi$ (and in \eqref{1.21}, also as $\phi$ varies in $\mathcal{S}(\RX_{\underline{p}})(\BA)$).
Then $\mathcal{F}_{\psi_{\underline{p}}}$ is identically zero on $\pi$ iff $\mathcal{F}^1_{\psi_{\underline{p}}}$ is identically zero on $\pi$ iff $\mathcal{F}^2_{\psi_{\underline{p}},\phi}$ is zero on $\pi$, for all $\phi\in \mathcal{S}(\RX_{\underline{p}})(\BA)$.

\end{lem}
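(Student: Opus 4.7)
\textbf{Proof plan for Lemma \ref{lem 1.1}.}

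The plan is to establish
$$\mathcal{F}_{\psi_{\underline{p}}}\equiv 0 \text{ on } \pi \;\Longleftrightarrow\; \mathcal{F}^1_{\psi_{\underline{p}}}\equiv 0 \text{ on } \pi \;\Longleftrightarrow\; \mathcal{F}^2_{\psi_{\underline{p}},\phi}\equiv 0 \text{ on } \pi \text{ for all } \phi,$$
by exploiting throughout the Heisenberg structure of $N_{\underline{p}}/Ker(b_{\underline{p}})$ with polarization $\mathcal{X}_{\underline{p}}\oplus\mathcal{Y}_{\underline{p}}$ from \eqref{1.14}. Since the center of this Heisenberg group is $N^2_{\underline{p}}/Ker(b_{\underline{p}})$, both $[\RX_{\underline{p}},N^2_{\underline{p}}]$ and $[\RY_{\underline{p}},N^2_{\underline{p}}]$ lie in $Ker(b_{\underline{p}})$; in particular, $\RX_{\underline{p}}(F)$ and $\RY_{\underline{p}}(F)$ both stabilize $\psi_{\underline{p}}$ under conjugation.

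For the first equivalence, one direction is immediate from the identity
$$\mathcal{F}^1_{\psi_{\underline{p}}}(\varphi)(g) = \int_{[\RX_{\underline{p}}]}\mathcal{F}_{\psi_{\underline{p}}}(\varphi)(xg)\,dx,$$
valid because of the stability noted above. For the converse, I would view $x\mapsto\mathcal{F}_{\psi_{\underline{p}}}(\varphi)(xg)$ as an automorphic function on the compact abelian group $[\RX_{\underline{p}}]$ and expand it in a Fourier series. The trivial mode is $\mathcal{F}^1_{\psi_{\underline{p}}}(\varphi)(g)$, vanishing by assumption. By the nondegeneracy of the symplectic form \eqref{1.13} over $F$, every nontrivial character of $\RX_{\underline{p}}(F)\backslash\RX_{\underline{p}}(\BA)$ is of the form $x\mapsto\psi(b_{\underline{p}}([x,y]))$ for a unique $y\in\RY_{\underline{p}}(F)\setminus\{0\}$. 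Using $[x,y]\in N^2_{\underline{p}}$, a change of variable $v\mapsto yvy^{-1}$ inside $[N^2_{\underline{p}}]$ (with $\psi_{\underline{p}}$-invariance), together with the $\GL_\ell(F)$-invariance of $\varphi$ under $y$, should identify the $y$-th Fourier mode with $\mathcal{F}^1_{\psi_{\underline{p}}}(\varphi)(yg)$ — again zero by hypothesis. Summing yields $\mathcal{F}_{\psi_{\underline{p}}}(\varphi)(xg)=0$ for all $x$, whence the converse.

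For the second equivalence, I would unfold the theta series using $\theta^\phi_{\psi,2\mathsf{n}_{\underline{p}}}(u)=\sum_{\xi\in\RX_{\underline{p}}(F)}(\omega_\psi(u)\phi)(\xi)$ and the polarization \eqref{1.14}--\eqref{1.16} to decompose $v\in N_{\underline{p}}(\BA)$ into its $\RY_{\underline{p}}$-, $\RX_{\underline{p}}$-, and $N^2_{\underline{p}}$-components. The standard Weil representation formulas on the Heisenberg group make the center act by $\psi\circ b_{\underline{p}}$, the $\RX_{\underline{p}}$-component act by translation of $\phi$, and the $\RY_{\underline{p}}$-component act by a character. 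The sum over $\xi$ collapses with the $[\RX_{\underline{p}}]$-integration inside $[N_{\underline{p}}]$ to give an integral over all of $\RX_{\underline{p}}(\BA)$, and the remaining $[\RY_{\underline{p}}]$- and $[N^2_{\underline{p}}]$-integrations produce an $\mathcal{F}^1$-type coefficient for the $\RY_{\underline{p}}$-variant of the polarization — equivalent in nonvanishing to $\mathcal{F}^1_{\psi_{\underline{p}}}$ by the first equivalence with roles of $\RX_{\underline{p}}$ and $\RY_{\underline{p}}$ swapped. Concretely, up to a harmless normalizing factor,
$$\mathcal{F}^2_{\psi_{\underline{p}},\phi}(\varphi)(\tilde{h})=\int_{\RX_{\underline{p}}(\BA)}\mathcal{F}^1_{\psi_{\underline{p}}}(\varphi)(x_0 h)\,(\omega_\psi(\tilde{h})\phi)(x_0)\,dx_0,$$
which vanishes for every $\phi$ iff $\mathcal{F}^1_{\psi_{\underline{p}}}\equiv 0$.

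The main obstacle I expect is the precise symplectic bookkeeping in the first equivalence — verifying that the Fourier mode indexed by $y\in\RY_{\underline{p}}(F)$ really does reduce to a translate of $\mathcal{F}^1_{\psi_{\underline{p}}}$, including tracking the cocycle contributions from the commutators that land in $Ker(b_{\underline{p}})$. This reduces to routine but careful calculations using the explicit description of $b_{\underline{p}}$ from \eqref{1.8}--\eqref{1.11}, and is an instance of the \emph{root exchange} technique of \cite{GRS03}.
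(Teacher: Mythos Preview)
Your proposal is correct, and what you have sketched is precisely the argument of Lemma~1.1 in \cite{GRS03}; the paper does not reprove this lemma but simply cites that reference. The Fourier expansion on $[\RX_{\underline{p}}]$ with modes indexed by $\RY_{\underline{p}}(F)$ via the symplectic pairing, and the unfolding of the theta kernel for the second equivalence, are exactly the ingredients used there, so there is no substantive difference between your route and the paper's.
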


\begin{proof}
	
This is Lemma 1.1 in \cite{GRS03} in the case of general linear groups.

\end{proof}

\begin{rmk}\label{rmk 1.2}
	
In \eqref{1.20}, we	may consider
$$
\mathcal{F}'_{\psi_{\underline{p}}}(\varphi)(g)=\int_{[\RY_{\underline{p}}]}\int_{[N^2_{\underline{p}}]}\varphi(vyg)\psi^{-1}_{\underline{p}}(v)dvdy.
$$
We just exchanged the roles of $\RX_{\underline{p}}$ and $\RY_{\underline{p}}$.
Lemma \ref{lem 1.1} is valid with $\mathcal{F}'_{\psi_{\underline{p}}}(\varphi)$ instead of $\mathcal{F}^1_{\psi_{\underline{p}}}(\varphi)$.

\end{rmk}

For an automorphic representation $\pi$ of $\GL_\ell(\BA)$, let $\mathcal{O}(\pi)$ denote the set of maximal partitions $\underline{p}$, such that one of the coefficients \eqref{1.18}, \eqref{1.20}, \eqref{1.21}, and hence all three, is nontrivial on $\pi$. 

In this paper, we determine the positive poles
of an Eisenstein series, corresponding to $\Delta(\tau,m_1)|\cdot|^s\times  \Delta(\tau,m_2)|\cdot|^{-s}$. At each such pole, we consider the corresponding residual representation $\pi$ and compute $\mathcal{O}(\pi)$.

\subsection{Degenerate Whittaker coefficients and generalized Whittaker coefficients} Following the terminology of \cite{GGS17}, with a slight difference, $(h_{\underline{p}},\alpha_{\underline{p}})$ is a Whittaker pair.
We recall that for a one parameter subgroup, $h:F^*\rightarrow \GL_\ell(F)$, and a nilpotent element $\alpha\in M_\ell(F)$, such that
\begin{equation}\label{1.22}
h(t)\alpha h(t)^{-1}=t^{-2}\alpha,
\end{equation}
the pair $(h,\alpha)$ is called a Whittaker pair. Moreover, one can check that $h_{\underline{p}}$ is, in the terminology of loc. cit., a neutral element for $\alpha_{\underline{p}}$. This means the following. Let $\mathfrak{h}_{\underline{p}}$ be the diagonal matrix in $M_\ell(F)$ obtained by taking the exponents of the powers of $t$ along the diagonal of $h_{\underline{p}}(t)$. Then there is an element $\beta\in M_\ell(F)$, such that $(\alpha_{\underline{p}},\mathfrak{h}_{\underline{p}},\beta)$ is an $sl_2$-triple. For a given Whittaker pair $(h,\alpha)$, one associates, as in \cite{MW87}, the following unipotent subgroup $N'_{\alpha,h}(F)$ of $\GL_\ell(F)$. For $i\geq 1$, let
\begin{equation}\label{1.23}
\mathfrak{g}^h_i=\left\{x\in M_\ell(F): h(t)xh(t)^{-1}=t^ix\right\}.
\end{equation}
Let $\mathfrak{n}^h=\sum_{i\geq 1}\mathfrak{g}^h_i$, $\mathfrak{g}^h_{\geq2}=\sum_{i\geq 2}\mathfrak{g}^h_i$, $N^2(F)=exp(\mathfrak{g}^h_{\geq2})$. Denote by $\mathfrak{g}_\alpha$ the centralizer of $\alpha$ in the Lie algebra $M_\ell(F)$, and denote $N'_{h,\alpha}(F)=N^2(F)exp(\mathfrak{g}^h_1\cap \mathfrak{g}_\alpha)$. See \cite{MW87}, Sec. I.7. Define the homomorphism $b_{\alpha,h}:N'_{\alpha,h}(F)\rightarrow F$,
$$
b_{\alpha,h}(x)=tr(\alpha\cdot log (x)).
$$
In our case, we have $N'_{\alpha_{\underline{p}},h_{\underline{p}}}=N^2_{\underline{p}}$. When $F$ is a number field, keep denoting by $b_{\alpha,h}$ the homomorphism of $N'_{\alpha,h}(\BA)$ deduced from $b_{\alpha,h}$ at every place of $F$. Let $\psi$ be a nontrivial character of $F\backslash \BA$. Denote $\psi_{\alpha,h}=\psi\circ b_{\alpha,h}$. Consider the following Fourier coefficient, applied to an automorphic function $\varphi$ on $\GL_\ell(\BA)$,
$$
\mathcal{F}_{\alpha,h}(\varphi)(g)=\int_{[N'_{\alpha,h}]}\varphi(vg)\psi^{-1}_{\alpha,h}(v)dv.
$$
Then we have the following theorem of Gourevitch, Gomez and Sahi, Theorm C in \cite{GGS17},

\begin{thm}\label{thm 1.3}
	
Let $\pi$ be an automorphic representation of $\GL_\ell(\BA)$. Let $(\alpha,h)$, $(\alpha,h')$ be Whittaker pairs, such that $h$ is neutral for $\alpha$. Assume that $\mathcal{F}_{\alpha,h'}$ is nontrivial on $\pi$. Then $\mathcal{F}_{\alpha,h}$ is nontrivial on $\pi$.

\end{thm}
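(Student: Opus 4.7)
The plan is to follow the deformation plus root-exchange strategy due to Gomez-Gourevitch-Sahi. I would interpolate between the two Whittaker pairs $(\alpha,h')$ and $(\alpha,h)$ by a path of semisimple elements $\mathfrak{h}_s=(1-s)\mathfrak{h}'+s\mathfrak{h}$, $s\in[0,1]$, where $\mathfrak{h},\mathfrak{h}'$ are the diagonal weight matrices attached to $h,h'$. The Whittaker-pair condition $\mathrm{ad}(\mathfrak{h})\alpha=-2\alpha$ is linear in $\mathfrak{h}$ and is satisfied at both endpoints, hence along the whole segment. Each $\mathfrak{h}_s$ determines a unipotent subgroup $N'_{\alpha,s}$ (the exponential of the $\geq 1$-eigenspaces of $\mathrm{ad}(\mathfrak{h}_s)$ intersected with the centralizer data, exactly as in \eqref{1.23}) together with a character $\psi_{\alpha,s}=\psi\circ\mathrm{tr}(\alpha\cdot\log(\cdot))$. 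As $s$ varies, the eigenspace decomposition of $\mathrm{ad}(\mathfrak{h}_s)$ on $M_\ell$ is piecewise constant, changing only at finitely many critical values $0=s_0<s_1<\dots<s_r=1$.

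Between consecutive critical values the integral $\mathcal{F}_{\alpha,s}$ is literally the same, so the question reduces to comparing $\mathcal{F}_{\alpha,s_i-\epsilon}$ and $\mathcal{F}_{\alpha,s_i+\epsilon}$ at each critical value. At such a jump a single root vector $X\in M_\ell$ moves between pieces (strictly positive, zero, strictly negative) of the $\mathrm{ad}(\mathfrak{h}_s)$-decomposition, and correspondingly a one-dimensional root subgroup $U\cong F$ enters or leaves the unipotent $N'_{\alpha,s}$. The transfer of nontriviality is then carried out by a root-exchange step: if $U$ leaves the integration domain as $s$ increases, one Fourier-expands $\mathcal{F}_{\alpha,s_i-\epsilon}(\varphi)$ along $[U]$ and uses that the pairing $(X,Y)\mapsto\psi\circ\mathrm{tr}(\alpha[X,Y])$ is nondegenerate between $U$ and an opposite root subgroup $V$, so that only one character in the expansion contributes and it is absorbed by integration against the new coefficient along $V$; the case when $U$ enters the domain is dual. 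Iterating this over all $r$ transitions transports nonvanishing from $\mathcal{F}_{\alpha,h'}$ to $\mathcal{F}_{\alpha,h}$.

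The neutrality hypothesis on $h$ is what makes the pairing above nondegenerate at every transition: neutrality means $(\alpha,\mathfrak{h},\beta)$ completes to an $\mathfrak{sl}_2$-triple, and $M_\ell$ decomposes as a direct sum of finite-dimensional $\mathfrak{sl}_2$-modules in which positive and negative weight spaces are matched up by $\mathrm{ad}(\alpha)$ and $\mathrm{ad}(\beta)$. This matching, pulled back along the interpolation to $h$, guarantees that the ``exchanged'' subgroup $V$ exists and is opposite to $U$ with respect to the alternating form induced by $\alpha$. The main obstacle in executing the plan rigorously is the combinatorial verification that at each critical value the roots being swapped assemble into complete Heisenberg-type blocks and that the characters on the intermediate unipotent subgroups extend consistently, so that the chain of equalities telescopes cleanly rather than producing merely an inequality. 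Once this is in place, each root-exchange step is a standard application of Fourier inversion on $[U]\cong F\backslash\BA$ together with a change of variables, and the theorem follows as in \cite{GGS17}.
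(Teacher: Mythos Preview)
The paper does not supply its own proof of this statement: Theorem~\ref{thm 1.3} is quoted verbatim as Theorem~C of \cite{GGS17}, with no further argument. Your proposal is therefore not to be compared against anything in the paper itself; rather, you have outlined the deformation plus root-exchange method that \cite{GGS17} actually uses, which is the intended reference.

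As a sketch of the \cite{GGS17} argument your outline is broadly correct, but two points deserve tightening. First, at a critical parameter $s_i$ it is not generally a \emph{single} root that crosses the threshold; an entire subspace of $\mathfrak{g}^{h_s}_1$ can collapse or appear, and the exchange is performed block-wise on a Heisenberg quotient rather than one root at a time. Second, your explanation of why neutrality of $h$ is needed is slightly off: the $\mathfrak{sl}_2$-triple at the neutral endpoint guarantees that $\mathrm{ad}(\alpha)$ is injective on the $\geq 1$ weight spaces for $\mathfrak{h}$, which is what forces the unipotent group $N'_{\alpha,h}$ to be minimal among all the $N'_{\alpha,h_s}$, so that as $s\to 1$ integration domains only shrink and nonvanishing is preserved. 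The nondegeneracy of the symplectic form you mention is a consequence of this injectivity, not an independent input. With these adjustments your plan matches the cited proof.
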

This theorem is used effectively by Liu and Xu in \cite{LX20}. In \cite{LX20}, the authors consider the set $\mathfrak{n}^m(\pi)$ of maximal nilpotent orbits $\mathcal{O}\subset M_\ell(F)$, under the adjoint action of $\GL_\ell(F)$, such that there is a Whittaker pair $(\alpha,h)$, $\alpha\in \mathcal{O}$, $h$  neutral for $\alpha$ and 
$\mathcal{F}_{\alpha,h}$ is nontrivial on $\pi$. This property is independent of the choice of $h$ which is neutral for $\alpha$. Liu and Xu denote by $\mathfrak{p}^m(\pi)$ the partitions of $\ell$ corresponding to the elements in $\mathfrak{n}^m(\pi)$. It follows that the sets $\mathcal{O}(\pi)$ and $\mathfrak{p}(\pi)$ coincide.

\subsection{Fourier coefficients with respect to $\underline{p}=(k,1^{\ell-k})$}
We consider the example of the partition $\underline{p}=(k,1^{\ell-k})$ of $\ell$. We will relate the corresponding Fourier coefficients to the Fourier coefficients which are global analogs of Jacquet modules considered by Bernstein-Zelevinsky \cite{BZ77},
\begin{equation}\label {1.24}
\mathcal{D}^0_{\psi,k}(\varphi)(g)=\int_{[V^0_{\ell-k,1^k}]}\varphi(vg)\psi^{-1}_{V_{\ell-k,1^k}}(v)dv,
\end{equation}
where $V^0_{\ell-k,1^k}=V_{\ell-k+1,1^{k-1}}$, and for
$$
v=\begin{pmatrix}I_{\ell-k}&x\\&z\end{pmatrix}\in V^0_{\ell-k,1^k}(\BA),\ \ \
\psi_{V_{\ell-k,1^k}}(v)=\psi_{Z_k}(z)=\psi(z_{1,2}+z_{2,3}+\cdots+z_{k-1,k}).
$$
Note that the first column of $x$ is zero. The global analog of the Bernstein-Zelevinsky derivative is
\begin{equation}\label{1.25}
\mathcal{D}_{\psi,k}(\varphi)(g)=\int_{[V_{\ell-k,1^k}]}\varphi(vg)\psi^{-1}_{V_{\ell-k,1^k}}(v)dv,
\end{equation}
where $\psi_{V_{\ell-k,1^k}}(v)$ is given by the same formula, i.e.
$$
\psi_{V_{\ell-k,1^k}}(\begin{pmatrix}I_{\ell-k}&x\\&z\end{pmatrix})=\psi_{Z_k}(z).
$$

\begin{prop}\label{prop 1.4}
	
Let $\pi$ be an automorphic representation of $\GL_\ell(\BA)$. Let $1\leq k\leq \ell$. Then $\mathcal{F}_{\psi_{(k,1^{\ell-k})}}$ is nontrivial on $\pi$ iff $\mathcal{D}^0_{\psi,k}$ is nontrivial on $\pi$. Moreover, there is an explicit formula relating the two Fourier coefficients.

\end{prop}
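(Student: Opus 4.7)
The plan is to recognize both $\mathcal{F}_{\psi_{(k,1^{\ell-k})}}$ and $\mathcal{D}^0_{\psi,k}$ as (generalized) Whittaker coefficients attached to the same nilpotent $\GL_\ell(F)$-orbit, namely the one indexed by the partition $(k,1^{\ell-k})$, but realized via two different Whittaker pairs, and then to pass between them by combining Theorem \ref{thm 1.3} with an explicit chain of root exchanges.

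First, I would identify $\mathcal{F}_{\psi_{(k,1^{\ell-k})}}$ with $\mathcal{F}_{\alpha_{\underline{p}},h_{\underline{p}}}$ for $\underline{p}=(k,1^{\ell-k})$; this is immediate from the constructions in Sections 1.1--1.2, and $h_{\underline{p}}$ is neutral for $\alpha_{\underline{p}}$ as noted after \eqref{1.22}. Next, I would consider the Whittaker pair $(\alpha',h')$ with
\[
\alpha'=\sum_{i=\ell-k+1}^{\ell-1}e_{i+1,i},\qquad h'(t)=\diag(1^{\ell-k},t^{k-1},t^{k-3},\ldots,t^{1-k}),
\]
so that $\alpha'$ is a lower-triangular Jordan block of size $k$ on the last $k$ coordinates and $h'$ is neutral for $\alpha'$. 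A direct computation of the $h'$-weights shows that $N'_{\alpha',h'}$ sits inside $V_{\ell-k+1,1^{k-1}}$, that the character $\psi_{\alpha',h'}$ agrees on $N'_{\alpha',h'}$ with the restriction of $\psi_{V_{\ell-k,1^k}}$, and that the quotient $V_{\ell-k+1,1^{k-1}}/N'_{\alpha',h'}$ is an abelian group of weight-zero root spaces on which $\psi_{V_{\ell-k,1^k}}$ is trivial. Hence $\mathcal{D}^0_{\psi,k}$ is obtained from $\mathcal{F}_{\alpha',h'}$ by an extra abelian integration against the trivial character, and non-vanishing of $\mathcal{D}^0_{\psi,k}$ on $\pi$ yields non-vanishing of $\mathcal{F}_{\alpha',h'}$ on $\pi$.

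Since $\alpha_{\underline{p}}$ and $\alpha'$ represent the same nilpotent orbit, there exists $w\in\GL_\ell(F)$ with $w\alpha_{\underline{p}}w^{-1}=\alpha'$; the change of variables $v\mapsto w^{-1}vw$ in $\mathcal{F}_{\alpha',h'}$ exhibits it, up to right translation by $w$, as $\mathcal{F}_{\alpha_{\underline{p}},w^{-1}h'w}$. Applying Theorem \ref{thm 1.3} to the Whittaker pairs $(\alpha_{\underline{p}},h_{\underline{p}})$ and $(\alpha_{\underline{p}},w^{-1}h'w)$, both having the same nilpotent part and with $h_{\underline{p}}$ neutral, gives the equivalence of non-vanishing on $\pi$. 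Tracking the explicit sequence of Fourier inversions used in the proof of Theorem \ref{thm 1.3}, composed with the Weyl conjugation by $w$, produces an equality of the form
\[
\mathcal{D}^0_{\psi,k}(\varphi)(g)=\int_{[U]}\mathcal{F}_{\psi_{(k,1^{\ell-k})}}(\varphi)(uwg)\,du
\]
for an explicit unipotent $U\subset\GL_\ell$, and the ``iff'' assertion follows from this formula together with its symmetric counterpart obtained by running the root exchanges in reverse.

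The main obstacle is the combinatorial bookkeeping of the root exchanges, which depends on the parity of $k$. For odd $k$ one has $N_{\underline{p}}=N^2_{\underline{p}}$ and a single Fourier-inversion step after the Weyl conjugation by $w$ suffices. For even $k$ the quotient $N_{\underline{p}}/N^2_{\underline{p}}$ is a nontrivial Heisenberg component; here one first replaces $\mathcal{F}_{\psi_{\underline{p}}}$ by the Fourier--Jacobi variant $\mathcal{F}^1_{\psi_{\underline{p}}}$ (or $\mathcal{F}'_{\psi_{\underline{p}}}$ of Remark \ref{rmk 1.2}) via Lemma \ref{lem 1.1}, chooses a polarization of $N_{\underline{p}}/N^2_{\underline{p}}$ compatible with the target $V_{\ell-k+1,1^{k-1}}$, and then runs the root exchange as in the odd case.
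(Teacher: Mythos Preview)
Your overall architecture --- one direction via Theorem~\ref{thm 1.3}, the other via explicit root exchanges, splitting on the parity of $k$ and invoking Lemma~\ref{lem 1.1} when $k$ is even --- matches the paper. However, there is a genuine error in your Whittaker-pair bookkeeping that breaks the link between $\mathcal{D}^0_{\psi,k}$ and the generalized Whittaker framework.

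You take $h'(t)=\diag(1^{\ell-k},t^{k-1},t^{k-3},\ldots,t^{1-k})$, which is \emph{neutral} for $\alpha'$, and then assert $N'_{\alpha',h'}\subset V_{\ell-k+1,1^{k-1}}$. This containment is false. With your $h'$, the root vector $e_{\ell-k+r,\,j}$ for $j\le \ell-k$ has $h'$-weight $k-2r+1$, which is $\ge 2$ whenever $r\le\lfloor (k-1)/2\rfloor$; these are strictly lower-triangular elements lying in $\mathfrak{g}^{h'}_{\ge 2}$, hence in $N'_{\alpha',h'}$ but not in $V_{\ell-k+1,1^{k-1}}$. Consequently the quotient $V_{\ell-k+1,1^{k-1}}/N'_{\alpha',h'}$ does not make sense, and your ``extra abelian integration'' step collapses. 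In fact, since $(\alpha',h')$ is neutral, $\mathcal{F}_{\alpha',h'}$ is just a $\GL_\ell(F)$-conjugate of $\mathcal{F}_{\psi_{\underline{p}}}$ itself, not something close to $\mathcal{D}^0_{\psi,k}$.

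The paper's fix is to take the \emph{non-neutral} one-parameter subgroup $h(t)=\diag(t^{k-1}I_{\ell-k+1},t^{k-3},\ldots,t^{1-k})$. All $h$-weights are then of the same parity, so $\mathfrak{g}^h_1=0$ and one checks directly that $N'_{\alpha,h}=V_{\ell-k+1,1^{k-1}}$ on the nose, with $\psi_{\alpha,h}=\psi_{V_{\ell-k,1^k}}$; thus $\mathcal{F}_{\alpha,h}=\mathcal{D}^0_{\psi,k}$ exactly. Theorem~\ref{thm 1.3} (non-neutral nonvanishing $\Rightarrow$ neutral nonvanishing) then gives the implication $\mathcal{D}^0_{\psi,k}\not\equiv 0\Rightarrow \mathcal{F}_{\psi_{\underline{p}}}\not\equiv 0$ immediately. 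For the converse and the explicit formula, the paper runs an iterated root-exchange starting from $\mathcal{F}_{\psi_{\underline{p}}}$ (or $\mathcal{F}^1_{\psi_{\underline{p}}}$ when $k$ is even), producing
\[
\mathcal{F}_{\psi_{\underline{p}}}(\varphi)(g)=\int_{Y(\BA)}\mathcal{D}^0_{\psi,k}(\varphi)(ywg)\,dy
\]
for an explicit $w$ and lower-unipotent $Y$; note this is the opposite direction to the formula you wrote. Also, it is not a ``single Fourier-inversion step'' for odd $k$: one needs $\tfrac{k-1}{2}$ successive root exchanges, each shifting the partition $(1^{a},\ell-k+1,1^{b})$ to $(1^{a-1},\ell-k+1,1^{b+1})$, and Cor.~7.1 of \cite{GRS11} at each step yields the equivalence of nonvanishing.
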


\begin{proof}
	
The Fourier coefficient \eqref{1.24} corresponds to the following Whittaker pair $(\alpha,h)$:
\begin{equation}\label{1.26}
h(t)=\diag(t^{k-1}I_{\ell-k+1},t^{k-3},t^{k-5},...,t^{1-k}),
\end{equation}
$$
\alpha=\begin{pmatrix}0_{(\ell-k)\times(\ell-k)}\\&0\\&1&0\\&&1&0\\&&&&\ddots\\&&&&&1&0\end{pmatrix}.
$$
Let $\underline{p}=(k,1^{\ell-k})$. Then $\alpha_{\underline{p}}$ is conjugate to $\alpha$. They have the same Jordan normal form. By Theorem \ref{thm 1.3}, if
$\mathcal{D}^0_{\psi,k}$ is nontrivial on $\pi$, then $\mathcal{F}_{\psi_{\underline{p}}}$ is nontrivial on $\pi$. We will prove the converse by exchanging roots (\cite{GRS11}, Lemma 7.1), showing an explicit passage from $\mathcal{F}_{\psi_{\underline{p}}}$ to $\mathcal{D}^0_{\psi,k}$. This will automatically prove both directions of the proposition. By definition, we have to consider two cases according to the parity of $k$. Let us start with the case where $k$ is odd. Then in this case,
$$
N_{\underline{p}}=N^2_{\underline{p}}=V_{1^{\frac{k-1}{2}},\ell-k+1,1^{\frac{k-1}{2}}},
$$
and for $v\in N_{\underline{p}}(\BA)$,
\begin{equation}\label{1.27}
\psi_{\underline{p}}(v)=\psi(\sum\limits_{i=1}^{\frac{k-1}{2}}v_{i,i+1}+v_{\frac{k+1}{2},\ell-\frac{k-3}{2}}+\sum\limits_{j=\ell-\frac{k-3}{2}}^{\ell-1}v_{j,j+1}).
\end{equation}
Let $w_0=\begin{pmatrix}I_{\frac{k-1}{2}}\\&0&I_{\ell-k}\\&1&0\\&&&I_{\frac{k-1}{2}}\end{pmatrix}$. Then conjugating by $w_0$ inside the integral defining $\mathcal{F}_{\psi_{\underline{p}}}(\varphi)(g)$, we get
\begin{equation}\label{1.28}
\mathcal{F}_{\psi_{\underline{p}}}(\varphi)(g)=\int_{[V_{1^{\frac{k-1}{2}},\ell-k+1,1^{\frac{k-1}{2}}}]}\varphi(vw_0g)\psi^{-1}_0(v)dv,
\end{equation}
where, for $v\in V_{1^{\frac{k-1}{2}},\ell-k+1,1^{\frac{k-1}{2}}}(\BA)$,
\begin{equation}\label{1.29}
\psi_0(v)=\psi(\sum\limits_{i=1}^{\frac{k-3}{2}}v_{i,i+1}+v_{\frac{k-1}{2},\ell-\frac{k-1}{2}}+\sum\limits_{j=\ell-\frac{k-1}{2}}^{\ell-1}v_{j,j+1}).
\end{equation}
Let $w_1=\begin{pmatrix}I_{\frac{k-3}{2}}\\&0&I_{\ell-k}\\&1&0\\&&&I_{\frac{k+1}{2}}\end{pmatrix}$. Then conjugating by $w_1$ inside \eqref{1.28}, we get
\begin{equation}\label{1.30}
\mathcal{F}_{\psi_{\underline{p}}}(\varphi)(g)=\int_{[V^{w_1}_{1^{\frac{k-1}{2}},\ell-k+1,1^{\frac{k-1}{2}}}]}\varphi(vw_1w_0g)\psi^{-1}_{0,1}(v)dv.
\end{equation}
Here, an element $v\in V^{w_1}_{1^{\frac{k-1}{2}},\ell-k+1,1^{\frac{k-1}{2}}}(\BA)=w_1V_{1^{\frac{k-1}{2}},\ell-k+1,1^{\frac{k-1}{2}}}(\BA)w_1^{-1}$ has the form
$$
v=\begin{pmatrix}z_1&\ast&e&\ast&\ast\\&I_{\ell-k}&0&0&\ast\\&y&1&u&\ast\\&&&1&u'\\&&&&z_2\end{pmatrix},\ z_1\in Z_{\frac{k-3}{2}}(\BA),\ z_2\in Z_{\frac{k-1}{2}}(\BA),
$$
and
$$
\psi_{0,1}(v)=\psi_{Z_{\frac{k-3}{2}}}(z_1)\psi_{Z_{\frac{k+1}{2}}}(\begin{pmatrix}1&u'\\&z_2\end{pmatrix})\psi(e_{\frac{k-3}{2}}+u).
$$
Let 
$$
X_1=\left\{\begin{pmatrix}I_{\frac{k-3}{2}}\\&I_{\ell-k}&0&x\\&&1&0\\&&&1\\&&&&I_{\frac{k-1}{2}}\end{pmatrix}\right\},\ Y_1=\left\{\begin{pmatrix}I_{\frac{k-3}{2}}\\&I_{\ell-k}\\&y&1\\&&&I_{\frac{k+1}{2}}\end{pmatrix}\right\}.
$$
Now, we can exchange roots between $Y_1$ and $X_1$ in \eqref{1.30}. See \cite{GRS11}, Lemma 7.1. We get
\begin{equation}\label{1.31}
\mathcal{F}_{\psi_{\underline{p}}}(\varphi)(g)=\int_{Y_1(\BA)}\int_{[V_{1^{\frac{k-3}{2}},\ell-k+1,1^{\frac{k+1}{2}}}]}\varphi(vy_1w_1w_0g)\psi^{-1}_1(v)dvdy_1,
\end{equation}
where, for $v\in V_{1^{\frac{k-3}{2}},\ell-k+1,1^{\frac{k+1}{2}}}(\BA)$,
\begin{equation}\label{1.32}
\psi_1(v)=\psi(\sum\limits_{i=1}^{\frac{k-5}{2}}v_{i,i+1}+v_{\frac{k-3}{2},\ell-\frac{k+1}{2}}+\sum\limits_{j=\ell-\frac{k+1}{2}}^{\ell-1}v_{j,j+1}).
\end{equation}
Note that in the passage from the integral \eqref{1.28} to the $dv$-integration in \eqref{1.31}, we passed from the partition $(1^{\frac{k-1}{2}},\ell-k+1,1^{\frac{k-1}{2}})$ to the partition $(1^{\frac{k-3}{2}},\ell-k+1,1^{\frac{k+1}{2}})$; $1$ is taken out once from the left and is added to the right. We note a similar passage from $\psi_0$ in \eqref{1.29} to $\psi_1$ in \eqref{1.32}. Thus, we may repeat the argument, passing at each step from the partition $(1^{\frac{k-2i+1}{2}},\ell-k+1,1^{\frac{k+2i-3}{2}})$ to $(1^{\frac{k-2i-1}{2}},\ell-k+1,1^{\frac{k+2i-1}{2}})$, using conjugation in the inner $dv$-integration by $w_i=\diag(I_{\frac{k-2i-1}{2}}, \begin{pmatrix}&I_{\ell-k}\\1\end{pmatrix},I_{\frac{k+2i-1}{2}})$. The root exchange is between the following subgroups
$$
X_i=\left\{\begin{pmatrix}I_{\frac{k-2i-1}{2}}\\&I_{\ell-k}&0&x\\&&1&0\\&&&1\\&&&&I_{\frac{k+2i-3}{2}}\end{pmatrix}\right\},\ Y_i=\left\{\begin{pmatrix}I_{\frac{k-2i-1}{2}}\\&I_{\ell-k}\\&y&1\\&&&I_{\frac{k+2i-1}{2}}\end{pmatrix}\right\}.
$$
This contributes an outer integration along $Y_i(\BA)$ and an inner  $dv$-integration, which is the Fourier coefficient along $[V_{1^{\frac{k-2i-1}{2}},\ell-k+1,1^{\frac{k+2i-1}{2}}}]$ with repect to the character
$$
\psi_i(v)=\psi(\sum\limits_{j=1}^{\frac{k-2i-3}{2}}v_{j,j+1}+v_{\frac{k-2i-1}{2},\ell-\frac{k+2i-1}{2}}+\sum\limits_{j=\ell-\frac{k+2i-1}{2}}^{\ell-1}v_{j,j+1}).
$$Note that
\begin{equation}\label{1.33}
w_iY_{i-1}w_i^{-1}=\left\{\diag(I_{\frac{k-2i-1}{2}},\begin{pmatrix}I_{\ell-k}\\0&1\\y&0&1\end{pmatrix},I_{\frac{k+2i-3}{2}})\right\}.
\end{equation}
We continue until we reach the partition $(\ell-k+1,k-1)$, where we get, using \eqref{1.33},
\begin{equation}\label{1.34}
\mathcal{F}_{\psi_{\underline{p}}}(\varphi)(g)=\int_{Y(\BA)}\mathcal{D}^0_{\psi,k}(\varphi)(ywg)dy,
\end{equation}
where 
$$
w=w_{\frac{k-1}{2}}w_{\frac{k-3}{2}}\cdots w_2w_1w_0=\begin{pmatrix}&I_{\ell-k}\\I_{\frac{k+1}{2}}\\&&I_{\frac{k-1}{2}}\end{pmatrix},
$$
$$
Y=\left\{\begin{pmatrix}I_{\ell-k}\\y&I_{\frac{k-1}{2}}\\&&I_{\frac{k+1}{2}}\end{pmatrix}\right\}.
$$
Applying \cite{GRS11}, Cor. 7.1, at each step above, we conclude that $\mathcal{F}_{\psi_{\underline{p}}}$ is nontrivial on $\pi$ iff $\mathcal{D}^0_{\psi,k}$ is nontrivial on $\pi$. This proves the proposition in case $k$ is odd. 

Assume that $k$ is even. Keep denoting $\underline{p}=(k,1^{\ell-k})$. In this case, $N^2_{\underline{p}}(F)$ is strictly contained in $N_{\underline{p}}(F)$. By Lemma \ref{lem 1.1}, $\mathcal{F}_{\underline{p}}$ is nontrivial on $\pi$ iff $\mathcal{F}^1_{\underline{p}}$ (see \eqref{1.20}) is nontrivial on $\pi$. Denote $N'_{\underline{p}}=N^2_{\underline{p}}\RX_{\underline{p}}$. Then
$N'_{\underline{p}}=V_{1^{\frac{k}{2}},\ell-k+1,1^{\frac{k-2}{2}}}$. Denote by $\psi_0$ the character of $N'_{\underline{p}}(\BA)$, obtained as the extension of $\psi_{\underline{p}}$ on $N^2_{\underline{p}}(\BA)$
by the trivial character on $\RX_{\underline{p}}(\BA)$. Thus, for $v\in V_{1^{\frac{k}{2}},\ell-k+1,1^{\frac{k-2}{2}}}(\BA)$,
$$
\psi_0(v)=\psi(\sum\limits_{i=1}^{\frac{k-2}{2}}v_{i,i+1}+v_{\frac{k}{2},\ell-\frac{k-2}{2}}+\sum\limits_{j=\ell-\frac{k-2}{2}}^{\ell-1}v_{j,j+1}),
$$
and
\begin{equation}\label{1.35}
\mathcal{F}^1_{\underline{p}}(\varphi)(g)=\int_{[V_{1^{\frac{k}{2}},\ell-k+1,1^{\frac{k-2}{2}}}]}\varphi(vg)\psi_0^{-1}(v)dv.
\end{equation}
Now, we are at a similar situation as in \eqref{1.28} (replacing $w_0$ there by $I_\ell$). We carry very similar steps and get
\begin{equation}\label{1.36}
\mathcal{F}^1_{\psi_{\underline{p}}}(\varphi)(g)=\int_{Y(\BA)}\mathcal{D}^0_{\psi,k}(\varphi)(ywg)dy,
\end{equation}
where 
$$
w=\begin{pmatrix}&I_{\ell-k}\\I_{\frac{k}{2}}\\&&I_{\frac{k}{2}}\end{pmatrix},
$$
$$
Y=\left\{\begin{pmatrix}I_{\ell-k}\\y&I_{\frac{k}{2}}\\&&I_{\frac{k}{2}}\end{pmatrix}\right\}.
$$
As before, we conclude that $\mathcal{F}^1_{\psi_{\underline{p}}}$, and hence  $\mathcal{F}_{\psi_{\underline{p}}}$, is nontrivial on $\pi$ iff $\mathcal{D}^0_{\psi,k}$ is nontrivial on $\pi$. This proves the proposition in case $k$ is even. 

\end{proof}

Let us check how the action of the stabilizer $R_{\underline{p}}(\BA)$ of $\psi_{\underline{p}}$, $\underline{p}=(k,1^{\ell-k})$, is translated via the identities \eqref{1.34}, \eqref{1.36}. Assume that $k$ is odd. Then the stabilizer $R_{\underline{p}}(\BA)$ inside $M_{1^{\frac{k-1}{2}},\ell-k+1,1^{\frac{k-1}{2}}}(\BA)$ is the adele points of
\begin{equation}\label{1.37}
R_{\underline{p}}=\left\{\diag(tI_{\frac{k+1}{2}},a,tI_{\frac{k-1}{2}}): a\in \GL_{\ell-k}, t\in \GL_1\right\}\cong \GL_{\ell-k}\times \GL_1.
\end{equation}
Let $\tilde{R}_{\underline{p}}$ denote the subgroup of $R_{\underline{p}}$ consisting of the elements in \eqref{1.37} with $t=1$. Write its elements as $\tilde{r}_{\underline{p}}(a)=\diag(I_{\frac{k+1}{2}},a,I_{\frac{k-1}{2}})$. Denote, also, $r_{\underline{p}}(a)=\diag(a,I_k)$. Then in \eqref{1.34}, we have, for $a\in \GL_{\ell-k}(\BA)$,
\begin{equation}\label{1.38}
\mathcal{F}_{\psi_{\underline{p}}}(\varphi)(\tilde{r}_{\underline{p}}(a)g)=|\det(a)|^{\frac{1-k}{2}}\int_{Y(\BA)}\mathcal{D}^0_{\psi,k}(\varphi)(r_{\underline{p}}(a)ywg)dy.
\end{equation}
Assume that $k$ is even. We consider the analogous objects for $\mathcal{F}^1_{\psi_{\underline{p}}}(\varphi)$. Here,
\begin{equation}\label{1.39}
\tilde{R}_{\underline{p}}=\left\{\tilde{r}_{\underline{p}}(a)=\diag(I_{\frac{k}{2}},a,I_{\frac{k}{2}}): a\in \GL_{\ell-k}\right\}\cong \GL_{\ell-k}.
\end{equation}
Denote $r_{\underline{p}}(a)=\diag(a,I_k)$ (as in the preious case). Then in \eqref{1.36}, we have, for $a\in \GL_{\ell-k}(\BA)$,
\begin{equation}\label{1.40}
\mathcal{F}^1_{\psi_{\underline{p}}}(\varphi)(\tilde{r}_{\underline{p}}(a)g)=|\det(a)|^{-\frac{k}{2}}\int_{Y(\BA)}\mathcal{D}^0_{\psi,k}(\varphi)(r_{\underline{p}}(a)ywg)dy.
\end{equation}

We have a similar relation of $\mathcal{F}_{\psi_{\underline{p}}}$ and the following analog of the Bernstein-Zelevinsky coefficient \eqref{1.24},
\begin{equation}\label{1.41}
^0\mathcal{D}'_{\psi,k}(\varphi)(g)=\int_{[V^0_{1^k,\ell-k}]}\varphi(vg)\psi^{-1}_{V_{1^k,\ell-k}}(v)dv,
	\end{equation}
where $V^0_{1^k,\ell-k}=V_{1^{k-1},\ell-k+1}$, and for $v=\begin{pmatrix}z&x\\&I_{\ell-k}\end{pmatrix}\in V^0_{1^k,\ell-k}(\BA)$, $\psi_{V_{1^k,\ell-k}}(v)=\psi_{Z_k}(z)$. Note that the last row of $x$ is zero.
We have the following analog of Prop. \ref{prop 1.4}, \eqref{1.34} and \eqref{1.35}, with a similar proof, which we leave to the reader.

\begin{prop}\label{prop 1.5}
	
Let $\pi$ be an automorphic representation of $\GL_\ell(\BA)$. Let $1\leq k\leq \ell$. Then $\mathcal{F}_{\psi_{(k,1^{\ell-k})}}$ is nontrivial on $\pi$ iff $^0\mathcal{D}'_{\psi,k}$ is nontrivial on $\pi$. Moreover, we have the following identities (analogs of \eqref{1.34}, \eqref{1.36}).\\
Assume that $k$ is odd. Then
\begin{equation}\label{1.42}
\mathcal{F}_{\psi_{\underline{p}}}(\varphi)(g)=\int_{Y'(\BA)}{}^0\mathcal{D}'_{\psi,k}(\varphi)(yw'g)dy,
\end{equation}
where 
$$
w'=\begin{pmatrix}I_{\frac{k+1}{2}}\\&&I_{\frac{k-1}{2}}\\&I_{\ell-k}\end{pmatrix},
$$
$$
Y'=\left\{\begin{pmatrix}I_{\frac{k+1}{2}}\\&I_{\frac{k-1}{2}}\\&y&I_{\ell-k}\end{pmatrix}\right\}.
$$
Assume that $k$ is even. Then
\begin{equation}\label{1.43}
\mathcal{F}^1_{\psi_{\underline{p}}}(\varphi)(g)=\int_{Y'(\BA)}{}^0\mathcal{D}'_{\psi,k}(\varphi)(yw'g)dy,
\end{equation}
where 
$$
w'=\begin{pmatrix}I_{\frac{k}{2}}\\&&I_{\frac{k}{2}}\\&I_{\ell-k}\end{pmatrix},
$$
$$
Y'=\left\{\begin{pmatrix}I_{\frac{k+2}{2}}\\&I_{\frac{k-2}{2}}\\&y&I_{\ell-k}\end{pmatrix}\right\}.
$$	 
	
\end{prop}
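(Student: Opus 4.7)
The plan is to mirror the proof of Proposition \ref{prop 1.4}, performing all conjugations and root exchanges on the opposite side of the matrix so that the big $(\ell-k+1)$-block is pushed to the \emph{right} end rather than to the left end. Accordingly, split into cases by the parity of $k$.

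For $k$ odd, begin from
\[
\mathcal{F}_{\psi_{\underline{p}}}(\varphi)(g)=\int_{[V_{1^{(k-1)/2},\ell-k+1,1^{(k-1)/2}}]}\varphi(vg)\psi^{-1}_{\underline{p}}(v)\,dv,
\]
with $\psi_{\underline{p}}$ as in \eqref{1.27}. First conjugate inside the integral by a Weyl element that moves the ``middle'' $1$-row of the partition to the bottom (rather than to the top) of the $(\ell-k+1)$-block, producing an integral over $V_{1^{(k+1)/2},\ell-k+1,1^{(k-3)/2}}$ with a transformed character supported on the $Z_{(k+1)/2}$- and $Z_{(k-3)/2}$-diagonals and at the ``seam''. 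Then iteratively perform root exchanges between pairs of abelian subgroups $(X'_i,Y'_i)$ placed symmetrically to the $(X_i,Y_i)$ of the proof of Proposition \ref{prop 1.4}: here $X'_i$ sits in the rows just below the big block and $Y'_i$ sits in the columns just to its right. At the $i$-th step the inner unipotent changes from $V_{1^{(k+2i-1)/2},\ell-k+1,1^{(k-2i-1)/2}}$ to $V_{1^{(k+2i+1)/2},\ell-k+1,1^{(k-2i-3)/2}}$, with a character obtained from the mirror of \eqref{1.32}. After $(k-1)/2$ iterations we arrive at the unipotent $V_{1^{k-1},\ell-k+1}$, which is exactly that of $^0\mathcal{D}'_{\psi,k}$. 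The accumulated Weyl element is precisely the $w'$ of \eqref{1.42}, and the residual outer integrations assemble into the $Y'$ displayed there, yielding \eqref{1.42}. Applying Corollary 7.1 of \cite{GRS11} at each step (exactly as in Proposition \ref{prop 1.4}) converts this identity into the claimed equivalence of nontriviality of $\mathcal{F}_{\psi_{(k,1^{\ell-k})}}$ and $^0\mathcal{D}'_{\psi,k}$ on $\pi$.

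For $k$ even, first invoke Lemma \ref{lem 1.1} to reduce to $\mathcal{F}^1_{\psi_{\underline{p}}}$, whose integrand is over $V_{1^{k/2},\ell-k+1,1^{(k-2)/2}}$ with the extension by the trivial character on $\RX_{\underline{p}}(\BA)$, as in \eqref{1.35}. Then run the same mirror sequence of root exchanges as in the odd case, pushing the big block to the right; this produces $^0\mathcal{D}'_{\psi,k}$ composed with the explicit Weyl element $w'$ and integration domain $Y'$ of \eqref{1.43}. As in the odd case, the chain of root-exchange steps gives both the identity \eqref{1.43} and, via Corollary 7.1 of \cite{GRS11}, the equivalence of nontriviality.

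The main obstacle is purely bookkeeping: choosing the Weyl elements and root-exchange pairs at each step so that (a) every intermediate conjugation puts the unipotent in a standard form whose character still factors through $\psi_{Z_\ast}$ on the 1-blocks plus a single ``seam'' term, and (b) the cumulative Weyl element and the assembled outer integration domain coincide exactly with the $w'$ and $Y'$ written in \eqref{1.42} and \eqref{1.43}. Conceptually, no new ingredient beyond Theorem \ref{thm 1.3} (for one direction) and the root-exchange lemma of \cite{GRS11} (for the other) is needed; the whole argument is the left-right mirror of Proposition \ref{prop 1.4}.
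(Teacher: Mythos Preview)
Your proposal is correct and matches the paper's approach exactly: the paper states that Proposition~\ref{prop 1.5} has ``a similar proof, which we leave to the reader,'' and what you have written is precisely the left-right mirror of the proof of Proposition~\ref{prop 1.4} that the authors intend. Your outline of the conjugations, root-exchange steps, and the assembly of $w'$ and $Y'$ is the expected bookkeeping.
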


Finally, we have the following analogs of \eqref{1.38}, \eqref{1.40}. Denote, for $a\in \GL_{\ell-k}(\BA)$, $r'_{\underline{p}}(a)=\diag(I_k,a)$. Then in \eqref{1.41}, we have, 
\begin{equation}\label{1.44}
\mathcal{F}_{\psi_{\underline{p}}}(\varphi)(\tilde{r}_{\underline{p}}(a)g)=|\det(a)|^{\frac{k-1}{2}}\int_{Y'(\BA)}{}^0\mathcal{D}'_{\psi,k}(\varphi)(r'_{\underline{p}}(a)yw'g)dy;
\end{equation}
and in \eqref{1.43}, we have, 
\begin{equation}\label{1.45}
\mathcal{F}^1_{\psi_{\underline{p}}}(\varphi)(\tilde{r}_{\underline{p}}(a)g)=|\det(a)|^{\frac{k}{2}}\int_{Y'(\BA)}{}^0\mathcal{D}'_{\psi,k}(\varphi)(r'_{\underline{p}}(a)yw'g)dy.
\end{equation}

\subsection{Fourier coefficients with respect to $(k^r)$}
Another example considered in this paper is the partition $\underline{p}=(k^r)$ of $\ell=kr$.
Here, $N_{\underline{p}}=N_{\underline{p}}^2=V_{r^k}$. Write $v\in V_{r^k}(\BA)$ as
$$
v=\begin{pmatrix}I_r&x_{1,2}&\cdots&x_{1,k}\\&I_r&\cdots&x_{2,k}\\&&\ddots\\&&&I_r\end{pmatrix}.
$$
Then
\begin{equation}\label{1.46}
\psi_{\underline{p}}(v)=\psi(tr(x_{1,2}+x_{2,3}+\cdots+x_{k-1,k})).
\end{equation}
Consider the following character of $Z_\ell(\BA)=V_{1^\ell}(\BA)$. 
\begin{equation}\label{1.47}
\psi_{k,r}(z)=\sum\limits_{i=1}^{\ell-1}\psi(\delta_iz_{i,i+1}),
\end{equation}
where $\delta_{jk}=0$, for $1\leq j\leq r-1$, and otherwise $\delta_i=1$. Consider the Fourier coefficient, 
\begin{equation}\label{1.48}
\mathcal{Z}_{\psi,k,r}(\varphi)(g)=\int\limits_{[Z_\ell]}\varphi(zg)\psi^{-1}_{k,r}(z)dz.
\end{equation}
Note that $\mathcal{Z}_{\psi,k,r}(\varphi)$ is obtained first by taking the constant term of $\varphi$ along $V_{k^r}$ and then applying the Whittaker coefficient on the maximal upper unipotent subgroups $Z_k(\BA)$ on each one of the $r$ blocks inside the Levi subgroup $M_{k^r}(\BA)$.
\begin{thm}\label{thm 1.6}
Let $\pi$ be an automorphic representation of $\GL_\ell(\BA)$, $\ell=kr$. Assume that $\mathcal{Z}_{\psi,k,r}$ is nontrivial on $\pi$. Then $\mathcal{F}_{\psi_{(k^r)}}$ is nontrivial on $\pi$. If $\mathcal{O}(\pi)=(k^r)$, then $\mathcal{Z}_{\psi,k,r}$ is nontrivial on $\pi$. Moreover, there is an explicit formula relating the two Fourier coefficients.
\end{thm}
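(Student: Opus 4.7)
I would prove the two implications separately, the first being formal and the second, together with the explicit formula, requiring a chain of root exchanges in the spirit of Proposition \ref{prop 1.4}. For the first implication, I would realize $\mathcal{Z}_{\psi,k,r}$ as a Fourier coefficient attached to a non-neutral Whittaker pair. Take $h'(t)=\diag(t^{\ell-1},t^{\ell-3},\ldots,t^{1-\ell})$, so that $\mathfrak{g}^{h'}_1=0$ and $N'_{\alpha',h'}=Z_\ell$, and let $\alpha'$ be the lower nilpotent matrix whose only nonzero entries are $(\alpha')_{i+1,i}=1$ for $i\in\{1,\ldots,\ell-1\}$ not divisible by $k$. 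Then $\psi_{k,r}=\psi\circ b_{\alpha',h'}$, so $\mathcal{Z}_{\psi,k,r}=\mathcal{F}_{\alpha',h'}$. The absence of the entries $(\alpha')_{jk+1,jk}$ for $j=1,\ldots,r-1$ splits $\alpha'$ as a block-diagonal sum of $r$ standard $k\times k$ lower-shift blocks, so $\alpha'$ has Jordan type $(k^r)$ and is $\GL_\ell(F)$-conjugate to $\alpha_{(k^r)}$. Since $h'$ is not neutral for $\alpha'$ (a neutral one is conjugate to $h_{(k^r)}$), Theorem \ref{thm 1.3} applies: nontriviality of $\mathcal{F}_{\alpha',h'}$ on $\pi$ forces nontriviality of $\mathcal{F}_{\alpha',h}$ on $\pi$ for any neutral $h$ for $\alpha'$, which, after conjugation, is the same as nontriviality of $\mathcal{F}_{\psi_{(k^r)}}$ on $\pi$.

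For the second implication and the explicit formula, I would perform an iterated root exchange (\cite{GRS11}, Lemma 7.1) passing from $\mathcal{F}_{\psi_{(k^r)}}$ to $\mathcal{Z}_{\psi,k,r}$. Starting from $\mathcal{F}_{\psi_{(k^r)}}(\varphi)(g)=\int_{[V_{r^k}]}\varphi(vg)\psi_{(k^r)}^{-1}(v)\,dv$, I would apply Weyl conjugations inside $\varphi$ (absorbed into the argument $g$) to align $V_{r^k}$ with a portion of $Z_\ell$, then iteratively trade each abelian root subgroup $X$ currently in the inner domain against a disjoint abelian subgroup $Y$ on which the character extends trivially. The exchanges proceed block-by-block on the off-diagonal blocks $x_{p,p+1}$ of $V_{r^k}$: each contribution $\psi(\mathrm{tr}(x_{p,p+1}))$ is converted, after a sequence of exchanges, into the sum of superdiagonal characters on appropriate simple-root subgroups of the $p$-th and $(p+1)$-st $\GL_k$-block of $M_{k^r}$. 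After completing all such stages, the inner integration becomes $\int_{[Z_\ell]}\varphi(zuwg)\psi_{k,r}^{-1}(z)\,dz=\mathcal{Z}_{\psi,k,r}(\varphi)(uwg)$, for an appropriate Weyl element $w$, with $u$ ranging over a unipotent group $U\subset\GL_\ell$ picked up along the way. The final identity takes the form
\[
\mathcal{F}_{\psi_{(k^r)}}(\varphi)(g)=\int_{U(\BA)}\mathcal{Z}_{\psi,k,r}(\varphi)(uwg)\,du,
\]
in parallel with \eqref{1.34} and \eqref{1.42}, and is the explicit formula of the theorem. Combined with the hypothesis $\mathcal{O}(\pi)=(k^r)$, which guarantees that $\mathcal{F}_{\psi_{(k^r)}}$ is nontrivial on $\pi$, it forces $\mathcal{Z}_{\psi,k,r}$ to be nontrivial on $\pi$.

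The hard part will be the combinatorial orchestration of the exchanges and the control of intermediate steps. At each step I must verify that either the current exchange is exact, so nontriviality passes through by \cite{GRS11}, Corollary 7.1, or else that the alternative outcome produces an inner Fourier coefficient whose attached partition strictly dominates $(k^r)$, in which case the assumption $\mathcal{O}(\pi)=(k^r)$ forces vanishing on $\pi$. The bookkeeping of Weyl elements and accumulated unipotent factors to assemble the final identity, in the style of the computation \eqref{1.28}--\eqref{1.36}, is routine but lengthy; no conceptual ingredients beyond those of Propositions \ref{prop 1.4} and \ref{prop 1.5} should be needed.
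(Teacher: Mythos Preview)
Your proposal is correct and follows essentially the same approach as the paper: the first implication via Theorem \ref{thm 1.3} with the Whittaker pair $(\alpha',h')$ you describe, and the second via an initial Weyl conjugation followed by iterated root exchanges, punctuated by Fourier-expansion steps where the hypothesis $\mathcal{O}(\pi)=(k^r)$ kills contributions attached to partitions such as $(k+1,k^{c-1},1^{\ell-ck-1})$, culminating in a formula of exactly the shape $\mathcal{F}_{\psi_{(k^r)}}(\varphi)(g)=\int_{Y(\BA)}\mathcal{Z}_{\psi,k,r}(\varphi)(yw_0 g)\,dy$. The only refinement worth noting is that in the paper the root exchanges themselves are always exact via \cite{GRS11}, Lemma 7.1, while the maximality hypothesis is invoked in separate Fourier-expansion steps (extending the unipotent domain by extra roots with trivial character), rather than as an alternative branch of a given exchange.
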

\begin{proof}
The first part is similar to the first part of Prop. \ref{prop 1.4}. The Fourier coefficient \eqref{1.48} corresponds to the following Whittaker pair $(\alpha,h)$:
\begin{equation}\label{1.49}
h(t)=\diag(t^{\ell-1},t^{\ell-3},...,t^{1-\ell}),
\end{equation}
$$
\alpha=diag(J_k,...,J_k),\ (r\  \text{times}),\ J_k=\begin{pmatrix}0\\1&0\\&1&0\\&&&\ddots\\&&&1&0\end{pmatrix}_{k\times k}.
$$
Let $\underline{p}=(k^r)$. Then $\alpha_{\underline{p}}$ is conjugate to $\alpha$. They have the same Jordan normal form. Thus, the first part of the proposition follows from Theorem \ref{thm 1.3}.	
	
Assume that $\mathcal{O}(\pi)=(k^r)$. As in Prop. \ref{prop 1.4}, we will prove the second part of the proposition by exchanging roots, which will also give us an explicit formula relating the two Fourier coefficients. We will need this formula later.
	
By assumption, $\mathcal{F}_{\psi_{(k^r)}}$ is nontrivial on $\pi$. Let
$$
w_0=\begin{pmatrix}e_{1,1}&\cdots&e_{k,1}\\\vdots&&\vdots\\e_{1,r}&\cdots&e_{k,r}\end{pmatrix},
$$
where $\{e_{i,j}:\ 1\leq i\leq k,\ 1\leq j\leq r\}$, denotes the standard basis of $M_{k\times r}(F)$. As in \eqref{1.30}, for $\varphi$ in the space of $\pi$,  
\begin{equation}\label{1.50}
\mathcal{F}_{\psi_{\underline{p}}}(\varphi)(g)=\int_{[V_{r^k}^{w_0}]}\varphi(vw_0g)\psi_0^{-1}(v)dv.
\end{equation}
Here, an element of $V_{r^k}^{w_0}(\BA)=w_0V_{r^k}(\BA)w^{-1}_0$ has the form
\begin{equation}\label{1.51}
v=\begin{pmatrix}X_{1,1}&\cdots&X_{1,r}\\ \vdots&&\vdots\\X_{r,1}&\cdots&X_{r,r}\end{pmatrix},
\end{equation}
where, for $1\leq i\neq j\leq r$, $X_{i,i}\in Z_k(\BA)$ and $X_{i,j}$ is upper nilpotent in $M_{k\times k}(\BA)$. Denote
\begin{equation}\label{1.52}
X_{i,i}=\begin{pmatrix}1&x^{i,i}_{1,2}&\cdots&x^{i,i}_{1,k}\\&1&\cdots&x^{i,i}_{2,k}\\&&\ddots\\&&&1\end{pmatrix},\ \ X_{i,j}=\begin{pmatrix}0&x^{i,j}_{1,2}&\cdots&x^{i,j}_{1,k}\\&0&\cdots&x^{i,j}_{2,k}\\&&\ddots\\&&&0\end{pmatrix}.
\end{equation}
Then
\begin{equation}\label{1.52.1}
\psi_0(v)=\prod\limits_{i=1}^r\psi(\sum\limits_{j=1}^{k-1}(x^{i,i}_{j,j+1})).
\end{equation}
Let, for $1\leq i< j\leq r$, $Y^1_{i,j}$ denote the subgroup of elements \eqref{1.51}, such that all blocks are the blocks of the identity matrix except those below the diagonal in the first column, and, for $2\leq i'\leq r$, we have $x^{i',1}_{i'',j''}=0$, for $(i'',j'')\neq (i,j)$. Let $X_1^{j-1,i}$ be the subgroup of elements $z\in Z_\ell$, such that when we write $z$ as a matrix of blocks as in \eqref{1.51}, then all blocks are the blocks of the identity matrix, except above the diagonal in the first row, and, for $2\leq j'\leq r$, $x^{1,j'}_{i'',j''}=0$, for all $(i'',j'')\neq (j-1,i)$. Then we can exchange roots $Y^1_{1,2}$ with $X_1^{1,1}$, then $Y^1_{2,3}$ with $X_1^{2,2}$, next $Y^1_{1,3}$ with $X_1^{2,1}$, and so on. In step $j$, we exchange $Y^1_{i,j}$ and $X^1_{j-1,i}$, for $i=j-1,j-2,...,1$, in this order. By \cite{GRS11}, Lemma 7.1,
\begin{equation}\label{1.53}
\mathcal{F}_{\psi_{\underline{p}}}(\varphi)(g)=\int\limits_{Y^1(\BA)}\int_{[V^1_0]}\varphi(vyw_0g)\psi_{1,0}^{-1}(v)dvdy,
\end{equation}
where $Y^1$ is the subgroup generated by $Y^1_{i,j}$, $1\leq i< j\leq r$, and $V^1_0$ is the subgroup generated by the  elements of $V_{r^k}^{w_0}$ with zero first block column below the diagonal and $X_1^{j-1,i}$, $1\leq i\leq j\leq r$. The character $\psi_{1,0}$ is given by the r.h.s. of \eqref{1.52.1}. When we write the elements $v$ of $V_0^1$ as block matrices as in \eqref{1.51}, we have $X_{i,1}=0$, for $2\leq i\leq r$, and each block in the first row is such that its last row is zero (and the other rows are arbitrary). All the remaining blocks are exactly as in \eqref{1.51}, \eqref{1.52}. Denote
$$
x_1(e)=\begin{pmatrix}I_{(k-1)}\\&1&e\\&&I_{(r-1)k}\end{pmatrix}.
$$	
Let $V^1$ be the subgroup generated by $V^1_0$ and all elements $x_1(e)$. Denote by $\psi_1$ the trivial extension of $\psi_{1,0}$. We claim that 
\begin{equation}\label{1.54}
\int_{[V^1_0]}\varphi(vg')\psi_{1,0}^{-1}(v)dv=\int_{[V^1]}\varphi(vg')\psi_1^{-1}(v)dv.
\end{equation}
For this, we may asume that $g'=I_\ell$. Note that $V^0_{1^k,\ell-k}$ (see \eqref{1.41}) is a subgroup of $V^1_0$ and that the restricton of $\psi_{0,1}$ to $V^0_{1^k,\ell-k}(\BA)$ is $\psi_{V^0_{1^k,\ell-k}}$ from \eqref{1.41}. It is then enough to show that, for all $\varphi\in \pi$, 
\begin{equation}\label{1.55}
{}^0\mathcal{D}'_{\psi,k}(\varphi)(I_\ell)=\mathcal{D}'_{\psi,k}(\varphi)(I_\ell).
\end{equation}
Indeed, consider the Fourier expansion of the following smooth function on\\ $F^{(r-1)k}\backslash \BA^{(r-1)k}$, $e\mapsto {}^0\mathcal{D}'_{\psi,k}(\varphi)(x_1(e))$.
Then only the trivial character contributes to this expansion. Indeed, it is enough to show that 
$$
\int\limits_{F^{(r-1)k}\backslash \BA^{(r-1)k}}{}^0\mathcal{D}'_{\psi,k}(\varphi)(x_1(e)\psi^{-1}(e_1))de=0,
$$
for all $\varphi\in \pi$. The last integral is ${}^0\mathcal{D}'_{\psi,k+1}(\varphi)(I_\ell)$. If this is nontrivial on $\pi$, then by Prop. \ref{prop 1.5}, the Fourier coefficient corresponding to $(k+1,1^\ell-k-1)$ is nontrivial on $\pi$, contradicting our assumption. This proves \eqref{1.55}. 

We continue by induction. Let, for  $1< c\leq r-1$, $Y^c$ be the subgroup defined analogously to $Y^1$, only that we take the elements \eqref{1.51}, where all blocks are the blocks of the identity matrix except column $c$ below the diagonal. Let $V_0^c$ be the subgroup generated by $V^{c-1}$ and the subgroups $X_c^{j-1,i}$, $1\leq i<j\leq r$, defined analogously to $X_1^{j-1,i}$, where we use the blocks in row $c$ above the diagonal. As in \eqref{1.53}, we get
\begin{equation}\label{1.56}
\mathcal{F}_{\psi_{\underline{p}}}(\varphi)(g)=\int\limits_{Y^1(\BA)}\cdots\int\limits_{Y^c(\BA)} \int_{[V^c_0]}\varphi(vy_c\cdots y_1w_0g)\psi_{c,0}^{-1}(v)dvdy_c\cdots dy_1,
\end{equation}
where $\psi_{c,0}$ is given by the r.h.s. of \eqref{1.52.1}. Denote	
$$
x_c(e)=\begin{pmatrix}I_{ck-1}\\&1&e\\&&I_{(r-c)k}\end{pmatrix}.
$$
Let $V^c$ be the subgroup generated by $V_0^c$ and all elemnts $x_c(e)$. Denote by $\psi_c$ the trivial extension of $\psi_{c,0}$. As in \eqref{1.54}, for all $\varphi\in \pi$,	
\begin{equation}\label{1.57}
\int_{[V^c_0]}\varphi(v)\psi_{c,0}^{-1}(v)dv=\int_{[V^c]}\varphi(v)\psi_c^{-1}(v)dv.
\end{equation}
The argument is similar. It is enough to show that
\begin{equation}\label{1.58}
\int\limits_{F^{(r-c)k}\backslash \BA^{(r-c)k}}\int_{[V^c_0]}\varphi(vx_c(e))\psi_{c,0}^{-1}(v)\psi^{-1}(e)dvde=0
\end{equation}	
identically on $\pi$. The Fourier coefficient \eqref{1.58} corresponds to the following Whittaker pair $(\alpha,h)$:
$$
h(t)=\diag(t^{2ck},t^{2(c-1)k},...,t^2,I_{(r-c)k}),
$$
$$
\alpha=diag(J_k,...,J_k,J_{k+1},0),
$$
where $J_k$ is taken $c-1$ times. The Jordan form of $\alpha$ corresponds to the partition $(k+1,k^{c-1},1^{\ell-ck-1})$. Using Theorem \ref{thm 1.3}, we get a contradiction to the assumption that $\mathcal{O}(\pi)=(k^r)$. From \eqref{1.56}, \eqref{1.57}, we get	
\begin{equation}\label{1.59}
\mathcal{F}_{\psi_{\underline{p}}}(\varphi)(g)=\int\limits_{Y^1(\BA)}\cdots\int\limits_{Y^c(\BA)} \int_{[V^c]}\varphi(vy_c\cdots y_1w_0g)\psi_{c}^{-1}(v)dvdy_c\cdots dy_1.
\end{equation}
Note that $V^{r-1}=Z_\ell$, $\psi_{r-1}$ is the character \eqref{1.47} and \eqref{1.59}, for $c=r-1$, is	
\begin{equation}\label{1.60}
\mathcal{F}_{\psi_{\underline{p}}}(\varphi)(g)=\int\limits_{Y(\BA)} \mathcal{Z}_{\psi,k,r}(\varphi)(yw_0g)dy,
\end{equation}
where $Y=Y^{r-1}\cdots Y^1$. This is the subgroup of elements \eqref{1.51}, where the diagonal blocks are $I_k$ and those above the diagonal are zero. Since, by assumption, the l.h.s. of \eqref{1.60} is nontrivial on $\pi$, then 	
$\mathcal{Z}_{\psi,k,r}$ is nontrivial on $\pi$. This completes the proof of the theorem.	
	
\end{proof}

\begin{thm}\label{thm 1.7}
Let $\pi$ be an automorphic representation of $\GL_\ell(\BA)$, $\ell=kr$. Assume that $\mathcal{O}(\pi)=(k^r)$. Let $a\in \SL_r(\BA)$. Denote $\delta_k(a)=\diag(a,...,a)$ ($k$ times). Then, for all $\varphi\in \pi$, $g\in \GL_\ell(\BA)$, $a\in \SL_r(\BA)$,
\begin{equation}\label{1.61}
\mathcal{F}_{\psi_{(k^r)}}(\varphi)(\delta_k(a)g)=\mathcal{F}_{\psi_{(k^r)}}(\varphi)(g).
\end{equation}
\end{thm}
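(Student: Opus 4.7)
The plan is to show invariance under $\delta_k(\SL_r(\BA))$ by reducing to the simple positive root subgroups of $\SL_r$ and then exploiting Theorem \ref{thm 1.3} together with the maximality hypothesis on $\mathcal{O}(\pi)$. First I would observe that $\delta_k(a) \in M_{r^k}(\BA)$ normalizes $V_{r^k}$ and that $\psi_{(k^r)}$ is preserved under the induced conjugation (because $\psi_{(k^r)}$ is built from traces of the superdiagonal blocks $x_{p,p+1}$, and $\tr(axa^{-1})=\tr(x)$). A change of variable then gives
\[
\mathcal{F}_{\psi_{(k^r)}}(\varphi)(\delta_k(a)g) \;=\; \int_{[V_{r^k}]}\varphi(\delta_k(a)vg)\,\psi^{-1}_{(k^r)}(v)\,dv.
\]
Since $\SL_r(\BA)$ is generated by its simple root subgroups $U_{\pm\alpha_i}(\BA)$, and since invariance under $U_{-\alpha_i}(\BA)$ follows from that under $U_{\alpha_i}(\BA)$ by using a lift of the simple Weyl reflection $s_i\in\SL_r(F)$ together with \eqref{1.19}, it suffices to prove invariance when $a=u_i(t):=I_r+tE_{i,i+1}$ ranges over $U_{\alpha_i}(\BA)$.

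Set $\phi(t):=\mathcal{F}_{\psi_{(k^r)}}(\varphi)(\delta_k(u_i(t))g)$, which by \eqref{1.19} is an automorphic function of $t\in F\backslash\BA$. I would Fourier-expand $\phi$ and show the $c$-th Fourier coefficient vanishes for every $c\in F^\times$. Unfolding the $dt$-integration into the $V_{r^k}$-integration gives
\[
\widehat\phi(c)(g) \;=\; \int_{[\tilde V]}\varphi(\tilde v g)\,\tilde\chi_c^{-1}(\tilde v)\,d\tilde v,
\]
where $\tilde V:=V_{r^k}\cdot\delta_k(U_{\alpha_i})$ is a unipotent subgroup of $\GL_{kr}$ and $\tilde\chi_c(v\cdot\delta_k(u_i(t))):=\psi_{(k^r)}(v)\,\psi(ct)$ is a well-defined character of $\tilde V(\BA)$ trivial on $\tilde V(F)$, using the conjugation invariance of $\psi_{(k^r)}$ under $\delta_k(U_{\alpha_i})$. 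The dual nilpotent element is
\[
\tilde\alpha \;:=\; \alpha_{(k^r)} + c\,E_{(k-1)r+i+1,\,(k-1)r+i},
\]
and a direct rank computation, tracing the Jordan chains, yields $\rank(\tilde\alpha^j)=(k-j)r$ for $0\le j\le k-1$, $\rank(\tilde\alpha^k)=1$, and $\tilde\alpha^{k+1}=0$. Hence $\tilde\alpha$ has Jordan type $(k+1,\,k^{r-2},\,k-1)$, which strictly dominates $(k^r)$.

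I would then realize $\widehat\phi(c)$ as a generalized Whittaker coefficient $\mathcal{F}_{\tilde\alpha,h'}(\varphi)$ for a Whittaker pair $(\tilde\alpha,h')$ via a sequence of root-exchange manipulations in the spirit of \cite{GRS11}, Lemma 7.1, and the passage from $\mathcal{F}_{\psi_{(k^r)}}$ to $\mathcal{Z}_{\psi,k,r}$ carried out in the proof of Theorem \ref{thm 1.6}. A natural candidate is $h'(t)=h_{(k^r)}(t)\cdot\delta_k(A_i(t))$ with $A_i(t)=\diag(1,\dots,t,\dots,t^{-1},\dots,1)\in\GL_r$ having $t$ in position $i$ and $t^{-1}$ in position $i+1$; one checks directly that $h'(t)\tilde\alpha h'(t)^{-1}=t^{-2}\tilde\alpha$. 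Applying Theorem \ref{thm 1.3} with a neutral element of $\tilde\alpha$ (via Jacobson--Morozov) would convert non-vanishing of $\mathcal{F}_{\tilde\alpha,h'}$ on $\pi$ into non-vanishing of the standard Fourier coefficient attached to the orbit $(k+1,k^{r-2},k-1)$, contradicting $\mathcal{O}(\pi)=(k^r)$. Hence $\widehat\phi(c)=0$ for all $c\in F^\times$, so $\phi\equiv\phi(0)=\mathcal{F}_{\psi_{(k^r)}}(\varphi)(g)$, giving invariance under $\delta_k(U_{\alpha_i}(\BA))$ and completing the proof.

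The hardest part will be the root-exchange identification of $\widehat\phi(c)$ with a Whittaker-pair coefficient $\mathcal{F}_{\tilde\alpha,h'}$: the candidate $h'$ above yields a unipotent $N'_{\tilde\alpha,h'}$ that does not coincide with $\tilde V$ on the nose, since certain entries of $V_{r^k}$ in adjacent blocks carry $h'$-weight $0$ or $1$ and fail to centralize $\tilde\alpha$. Reconciling the two integration domains will require a careful sequence of exchanges, analogous in spirit to the bookkeeping carried out in Proposition \ref{prop 1.4} and Theorem \ref{thm 1.6}, and is the real technical heart of the argument.
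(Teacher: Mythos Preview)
Your proposal is essentially correct and follows the same strategy as the paper: reduce to unipotent generators of $\SL_r$, Fourier-expand, and show each nontrivial coefficient is a (generalized) Whittaker coefficient attached to a nilpotent of type $(k+1,k^{r-2},k-1)$, which must vanish by Theorem~\ref{thm 1.3} and the hypothesis $\mathcal{O}(\pi)=(k^r)$. Your rank computation for $\tilde\alpha$ and the verification that $(h',\tilde\alpha)$ is a Whittaker pair are both right.

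The differences from the paper are minor and worth noting. The paper reduces not to simple root subgroups but to the full first-row unipotents $v(e)=\begin{pmatrix}1&e\\&I_{r-1}\end{pmatrix}$, $e\in\BA^{r-1}$, using that these together with $\SL_r(F)$ generate $\SL_r(\BA)$; the $\GL_{r-1}(F)$-action inside the stabilizer then collapses all nontrivial characters to the single one $\psi^{-1}(e_1)$. More importantly, the paper does not attempt to match $\tilde V$ directly to $N'_{\tilde\alpha,h'}$. Instead it performs an explicit $(k-1)$-step root exchange (subgroups $X_j\leftrightarrow Y_j$, $1\le j\le k-1$, as in \cite{GRS11}, Lemma~7.1) transforming the coefficient into one along $V_{1,r^{k-1},r-1}$ with an explicit character $\eta_\psi$; that new coefficient \emph{is} on the nose $\mathcal{F}_{\alpha,h}$ for the Whittaker pair $h(t)=\diag(t^k,t^{k-2}I_r,\ldots,t^{2-k}I_r,t^{-k}I_{r-1})$ and an explicit $\alpha$ of the same Jordan type. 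So what you flagged as ``the hardest part'' is exactly the content the paper supplies concretely, and your diagnosis that $N'_{\tilde\alpha,h'}\ne\tilde V$ (because superdiagonal-block entries with $w(a)<w(b)$ have $h'$-weight $<2$) is precisely why the paper takes the root-exchange route rather than a direct Whittaker-pair identification.
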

\begin{proof}
Note that the character $\psi_{(k^r)}$ is preserved under conjugation by $\delta_k(a)$. We may assume that $g=I_\ell$. Denote
$$
v(e)=\begin{pmatrix}1&e\\&I_{r-1}\end{pmatrix}.
$$
It is enough to prove \eqref{1.61} for $a=v(e)$, since these elements and $\SL_r(F)$ generate $\SL_r(\BA)$. We will prove that in the Fourier expansion of the function on $(F\backslash \BA)^{r-1}$,
$e\mapsto \mathcal{F}_{\psi_{(k^r)}}(\varphi)(\delta_k(v(e))$, only the trivial character contributes. It is enough to show that
\begin{equation}\label{1.62}
\int\limits_{(F\backslash \BA)^{r-1}}\mathcal{F}_{\psi_{(k^r)}}(\varphi)(\delta_k(v(e))\psi^{-1}(e_1)de=0,
\end{equation}
for all $\varphi\in \pi$. One can check that we can perform root exchange as follows. Let $Y_i$, $1\leq i\leq k-1$,  be the subgroup of $V_{r^k}$ consisting of the matrices
$$
\diag(I_{(i-1)r},\begin{pmatrix}1&0&0&0\\&I_{r-1}&y&0\\&&1&0\\&&&I_{r-1}\end{pmatrix},I_{(k-i-1)r}).
$$
Let $X_i$ be the subgroup consisting of the matrices
$$
\diag(I_{(i-1)r},v(x),I_{(k-i)r}).
$$
Then we can exchange in \eqref{1.62} $X_1$ and $Y_1$,...,$X_{k-1}$ and $Y_{k-1}$. The unipotent subgroup $V$, obtained after these root exchanges, is the one generated by $\{\diag(v(x_1),...,v(x_k))\}$ and the matrices
$$
\begin{pmatrix}I_r&X_{1,2}&\cdots&X_{1,k}\\&I_r&\cdots&X_{2,k}\\&&\ddots\\&&&I_r\end{pmatrix},
$$
where , for $1\leq i\leq k-1$, the first column of $X_{i,i+1}$ below the diagonal is zero. Then $V=V_{1,r^{k-1},r-1}$.
By \cite{GRS11}, Lemma 7.1, \eqref{1.62} holds identically iff the following Fourier coefficient is identically zero on $\pi$,
\begin{equation}\label{1.63}
\int\limits_{[V_{1,r^{k-1},r-1}]}\varphi(v)\eta_\psi^{-1}(v)dv,
\end{equation}
where, for
$$
v=\begin{pmatrix}1&x&\ast&\cdots&\ast&\ast\\&I_r&y_1&\cdots&\ast&\ast\\&&\ddots\\&&&I_r&y_{k-2}&\ast\\&&&&I_r&u\\&&&&&I_{r-1}\end{pmatrix}\in V_{1,r^{k-1},r-1}(\BA),
$$
$$
\eta_\psi(v)=\psi(x_r)\psi(tr(y_1+\cdots+y_{r-2}))\psi(u_{1,1}+u_{2,2}+\cdots+u_{r-1,r-1}+u_{r,1}).
$$
The Fourier coefficient \eqref{1.63} corresponds to the following Whittaker pair $(\alpha,h)$:
$$
h(t)=\diag(t^k,t^{k-2}I_r,...,t^{2-k}I_r,t^{-k}I_{r-1}),
$$
$$
\alpha=\begin{pmatrix}0_1\\\epsilon&0_r\\&I_r&0_r\\&&&\ddots\\&&&I_r&0_r\\&&&&A&0_{r-1}\end{pmatrix},
$$
where $0_i$ denotes the $i\times i$ zero matrix, $\epsilon$ is the column vector in $r$ coordinates $0,...,0,1$, and $A$ is the matrix whose first $r-1$ columns are $I_{r-1}$ and its last column has coordinates $1,0,...,0$. The Jordan form of $\alpha$ corresponds to the partition $(k+1,k^{r-2},k-1)$. Thus, by our assumption on $\pi$, \eqref{1.63} is identically zero on $\pi$ and hence so is \eqref{1.62}. This proves the theorem.

\end{proof}

\section{A descent operation on Eisenstein series induced from Speh representations}

In this section, we apply the global Bernstein-Zelevinsky derivative $\mathcal{D}_{\psi,kn}$ to an Eisenstein series, $E_{\Delta(\tau,\underline{m}),\underline{s}}$, attached to the parabolic induction
\begin{equation}\label{2.1}
\rho_{\Delta(\tau,\underline{m}),\underline{s}}=\Delta(\tau,m_1)|\cdot|^{s_1}\times\Delta(\tau,m_2)|\cdot|^{s_2}\times\cdots\times\Delta(\tau,m_k)|\cdot|^{s_k},
\end{equation}
where $\tau$ is an irreducible, unitary, automorphic, cuspidal representation of $\GL_n(\BA)$, $\underline{m}=(m_1,...,m_k)$ is a $k$-touple of positive integers, and $\underline{s}=(s_1,...,s_k)$ is a $k$-touple of complex numbers. Note that the representation \eqref{2.1} is of the group $\GL_{mn}(\BA)$, where $m=\sum_{i=1}^km_i$. We will prove that, as a representation of $\GL_{(m-k)n}(\BA)$, $\mathcal{D}_{\psi,kn}(E_{\Delta(\tau,\underline{m}),\underline{s}})$ is an Eisenstein series attached to $|\det\cdot|^{\frac{kn-1}{2}}\rho_{\Delta(\tau,\underline{m}-\underline{1}),\underline{s}}$, where $\underline{1}$ is the $k$-touple $(1,...,1)$. Here, we view $\GL_{(m-k)n}$, as the subgroup\\ $r_{(kn,1^{(m-k)n})}(\GL_{(m-k)n})$ in \eqref{1.38}, \eqref{1.40}. We think of $\mathcal{D}_{\psi,kn}$ as a descent map which associates to an automorphic representation of $\GL_{kn}(\BA)$, an automorphic representation of $\GL_{(m-k)n}(\BA)$.

\subsection{Relation of the $\psi_{(kn,1^{(m-k)n})}$-coefficients on $\rho_{\Delta(\tau,\underline{m}),\underline{s}}$ and derivatives}

By Theorem 1.1 in \cite{LX20}, for $Re(s_1)\gg \Re(s_2)\gg\dots\gg\Re(s_k)$,
\begin{equation}\label{2.2}
\mathcal{O}(E_{\Delta(\tau,\underline{m}),\underline{s}})=((kn)^{m_{i_1}},((k-1)n)^{m_{i_2}-m_{i_1}},...,(2n)^{m_{i_{k-2}}-m_{i_{k-1}}},n^{m_{i_{k-1}}-m_{i_k}}),
\end{equation}
where $m_{i_1}\leq\dots\leq  m_{i_k}$. The part of the proof where the r.h.s. of \eqref{2.2} bounds all elements of $\mathcal{O}(E_{\Delta(\tau,\underline{m}),\underline{s}})$ is relatively simple and is valid for all $\underline{s}$, where the Eisenstein series is holomorphic. It can be deduced using one finite place of $F$, where $\tau$ is unramified. This implies

\begin{prop}\label{prop 2.1}
	
Let $f_{\Delta(\tau,\underline{m}),\underline{s}}$ be a smooth, holomorphic section of $\rho_{\Delta(\tau,\underline{m}),\underline{s}}$, and let $E(f_{\Delta(\tau,\underline{m}),\underline{s}})(g)$ be the corresponding Eisenstein series. Then, for $g\in \GL_{mn}(\BA)$,
$$
\mathcal{D}^0_{\psi,kn}(E(f_{\Delta(\tau,\underline{m}),\underline{s}}))(g)=\mathcal{D}_{\psi,kn}(E(f_{\Delta(\tau,\underline{m}),\underline{s}}))(g).
$$
We conclude, from \eqref{1.34} and the notation there, when $kn$ is odd, that
\begin{equation}\label{2.3}
\mathcal{F}_{\psi_{(kn,1^{(m-k)n})}}(E(f_{\Delta(\tau,\underline{m}),\underline{s}}))(g)=\int_{Y(\BA)}\mathcal{D}_{\psi,kn}(E(f_{\Delta(\tau,\underline{m}),\underline{s}}))(ywg)dy,
\end{equation}
and, similarly, by \eqref{1.36}, when $kn$ is even,
\begin{equation}\label{2.4}
\mathcal{F}^1_{\psi_{(kn,1^{(m-k)n})}}(E(f_{\Delta(\tau,\underline{m}),\underline{s}}))(g)=\int_{Y(\BA)}\mathcal{D}_{\psi,kn}(E(f_{\Delta(\tau,\underline{m}),\underline{s}}))(ywg)dy.
\end{equation}

\end{prop}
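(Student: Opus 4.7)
The plan is to establish the pointwise identity $\mathcal{D}^0_{\psi,kn}(E(f_{\Delta(\tau,\underline{m}),\underline{s}})) = \mathcal{D}_{\psi,kn}(E(f_{\Delta(\tau,\underline{m}),\underline{s}}))$, from which \eqref{2.3} and \eqref{2.4} follow at once by combining with \eqref{1.34} and \eqref{1.36}. The starting observation is that $V^0_{(m-k)n, 1^{kn}} = V_{(m-k)n+1, 1^{kn-1}}$ is normal in $V_{(m-k)n, 1^{kn}}$, with abelian quotient the $(m-k)n$-dimensional ``first column of $x$'' subgroup $U$, on which the character $\psi_{V_{(m-k)n, 1^{kn}}}$ is trivial. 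Writing $u(e) \in U$ for the element whose first column equals $e \in F^{(m-k)n}$, one has
\[
\mathcal{D}_{\psi,kn}(E(f))(g) = \int_{F^{(m-k)n} \backslash \BA^{(m-k)n}} \mathcal{D}^0_{\psi,kn}(E(f))(u(e) g)\, de,
\]
so the required identity is equivalent to the vanishing, for every nonzero $\eta \in F^{(m-k)n}$, of the $\psi_\eta$-Fourier coefficient of the function $e \mapsto \mathcal{D}^0_{\psi,kn}(E(f))(u(e) g)$.

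For a nonzero $\eta$, I would use the transitive action of $\GL_{(m-k)n}(F) \subset M_{(m-k)n, 1^{kn}}(F)$ on nonzero first columns (at the cost of translating $g$) to reduce to $\eta = (0, \ldots, 0, 1)^t$, selecting the matrix entry at position $((m-k)n, (m-k)n+1)$. The resulting integral is then a Fourier coefficient of $E(f)$ over all of $V_{(m-k)n, 1^{kn}}$ against an extended character whose Whittaker pair $(\alpha', h')$ has $\alpha'$ of Jordan type $(kn+1, 1^{(m-k)n-1})$: the extra character enlarges the standard size-$kn$ Jordan block coming from $Z_{kn}$ to one of size $kn+1$. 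After conjugating $\alpha'$ to the standard $\alpha_{(kn+1, 1^{(m-k)n-1})}$ by an element of $\GL_{mn}(F)$ (which only translates $g$) and applying Theorem \ref{thm 1.3}, non-vanishing of this coefficient on $E(f)$ would force non-vanishing of $\mathcal{F}_{\psi_{(kn+1, 1^{(m-k)n-1})}}$ on $E(f)$.

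Finally, the partition $(kn+1, 1^{(m-k)n-1})$ has first part $kn+1 > kn$, so it is not dominated (in the dominance order on partitions of $mn$) by $\underline{p}^\star := ((kn)^{m_{i_1}}, ((k-1)n)^{m_{i_2}-m_{i_1}}, \ldots)$. By the upper-bound part of Theorem~1.1 of \cite{LX20}, which, as noted in the excerpt, is valid whenever the Eisenstein series is holomorphic and can be checked at a single unramified finite place, every partition $\underline{q}$ with $\mathcal{F}_{\psi_{\underline{q}}}$ nontrivial on $E(f)$ satisfies $\underline{q} \leq \underline{p}^\star$. Hence $\mathcal{F}_{\psi_{(kn+1, 1^{(m-k)n-1})}}$ vanishes identically on $E(f)$, every nontrivial Fourier character contributes zero, and the identity $\mathcal{D}^0_{\psi,kn}(E(f)) = \mathcal{D}_{\psi,kn}(E(f))$ follows; \eqref{2.3} and \eqref{2.4} are then immediate consequences of Proposition \ref{prop 1.4}. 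The main technical point is the identification in the second paragraph, where one must match the augmented Fourier coefficient with the standard $\mathcal{F}_{\psi_{(kn+1, 1^{(m-k)n-1})}}$ up to a Whittaker-pair equivalence so that Theorem \ref{thm 1.3} can be applied cleanly; once that is done, the rest is a direct appeal to the Liu--Xu upper bound on $\mathcal{O}(E(f))$.
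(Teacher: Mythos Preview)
Your proposal is correct and follows essentially the same route as the paper: Fourier-expand $\mathcal{D}^0_{\psi,kn}(E(f))$ along the missing first column, use the transitive $\GL_{(m-k)n}(F)$-action to reduce to a single nontrivial character, and kill that term by the upper bound \eqref{2.2} from \cite{LX20}. The one simplification the paper makes is that the ``technical point'' you flag is immediate: with $\eta=(0,\dots,0,1)^t$ the augmented coefficient is literally $\mathcal{D}^0_{\psi,kn+1}(E(f))(g)$ (just reblock $V_{(m-k)n,1^{kn}}$ as $V^0_{(m-k)n-1,1^{kn+1}}$), so Proposition~\ref{prop 1.4} applies directly without any separate Whittaker-pair matching.
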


\begin{proof}

Denote for short, $\varphi=E(f_{\Delta(\tau,\underline{m}),\underline{s}})$. Consider the following function on $\BA^{(m-k)n}$ (column vectors)
$$ 
u\mapsto \mathcal{D}^0_{\psi,kn}(\varphi)(\begin{pmatrix}I_{(m-k)n}&u\\&1\\&&I_{kn-1}\end{pmatrix}g)	
$$	
This is a smooth function on the compact abelian group $F^{(m-k)n}\backslash \BA^{(m-k)n}$. Consider its Fourier expansion. The contribution to the Fourier of each nontrivial character of $F^{(m-k)n}\backslash \BA^{(m-k)n}$ is zero. Indeed, all these characters form one orbit under the action of $\diag (GL_{(m-k)n}(F),I_{kn})$, hence it is enough to consider
$$
\int_{F^{(m-k)n}\backslash \BA^{(m-k)n}}\mathcal{D}^0_{\psi,kn}(\varphi)(\begin{pmatrix}I_{(m-k)n}&u\\&1\\&&I_{kn-1}\end{pmatrix}g)\psi^{-1}(u_{(m-k)n})du.
$$
This Fourier coefficient is equal to $\mathcal{D}^0_{\psi,kn+1}(\varphi)(g)$, which is identically zero by \eqref{2.2} and, by Prop. \ref{prop 1.4} (or Theorem \ref{thm 1.3} in this case), with $\ell=mn$ and $kn+1$ in place of $k$. Thus, only the trivial character contributes to the above Fourier expansion.	
	
\end{proof}

\subsection{The descent to $\GL_{(m-k)n}(\BA)$ of the Eisenstein series $E(f_{\Delta(\tau,\underline{m}),\underline{s}})$} 

Before we state the main theorem of this section, we need some notation. Denote by $w_0$ the following permutation matrix in $\GL_{mn}(F)$.
\begin{equation}\label{2.5}
w_0=(w^1_0,w^2_0)
\end{equation}
$$
w^1_0=\diag(A_1,A_2,...,A_k),\ \ \ \ A_i=\begin{pmatrix}I_{(m_i-1)n}\\0_{n\times(m_i-1)n}\end{pmatrix},
$$
$$
w^2_0=\diag(B_1,B_2,...,B_k),\ \ \ \ B_i=\begin{pmatrix}0_{(m_i-1)n\times n}\\I_n\end{pmatrix};
$$
$$
\epsilon_0=\begin{pmatrix}&&&I_n\\&&I_n\\&\dots\\I_n\end{pmatrix}_{kn\times kn};
$$
$$
\mathcal{B}=\left\{\begin{pmatrix}0&\\x_{2,1}&0\\x_{3,1}&x_{3,2}&0\\&&&\ddots\\x_{k-1,1}&x_{k-1,2}&&&0\\x_{k,1}&x_{k,2}&&\dots&x_{k,k-1}&0\end{pmatrix}\in M_{(m-k)n}: x_{i,j}\in M_{(m_i-1)n\times n}, i>j \right\}.
$$
Recall that, for $g\in GL_{(m-k)n}(\BA)$, $r_{(kn,1^{(m-k)n})}(g)=\diag(g,I_{kn})$. Denote, for short, 
$$
r(g)=r_{(kn,1^{(m-k)n})}(g).
$$

\begin{thm}\label{thm 2.2}
Let $f_{\Delta(\tau,\underline{m}),\underline{s}}$ be a smooth, holomorphic section of $\rho_{\Delta(\tau,\underline{m}),\underline{s}}$. Then, for $Re(s_i-s_{i+1})\gg 0$, $1\leq i\leq k-1$, $g\in \GL_{(m-k)n}(\BA)$,
\begin{equation}\label{2.6}
\mathcal{D}_{\psi,kn}(E(f_{\Delta(\tau,\underline{m}),\underline{s}}))(r(g))=\sum\limits_{\gamma\in P_{n(\underline{m}-\underline{1})}(F)\backslash \GL_{(m-k)n}(\BA)}\Lambda(f_{\Delta(\tau,\underline{m}),\underline{s}})(\gamma g),
\end{equation}
where
\begin{equation}\label{2.7}
\Lambda(f_{\Delta(\tau,\underline{m}),\underline{s}}) (g)=\int\limits_{V_{n^k}(\BA)}\int\limits_{\mathcal{B}(\BA)}(\otimes^k\mathcal{D}_{\psi,n})(f_{\Delta(\tau,\underline{m}),\underline{s}}(w_0\begin{pmatrix}g&bv\\&\epsilon_0v\end{pmatrix}))\psi^{-1}_{Z_{kn}}(v)dbdv.
\end{equation}
Here, $\otimes^k\mathcal{D}_{\psi,n}$ denotes the $k$-fold tensor product of the descents from $\Delta(\tau,m_i)$ on $\GL_{m_in}(\BA)$ to $\GL_{(m_i-1)n}(\BA)$, $1\leq i\leq k$.\\
The integral \eqref{2.7} converges absolutely for $Re(s_i-s_{i+1})\gg 0$, $1\leq i\leq k-1$, and continues to a meromorphic function in $\BC^k$. $\Lambda(f_{\Delta(\tau,\underline{m}),\underline{s}}) $ is a smooth meromorphic section of $|\det\cdot|^{\frac{kn-1}{2}}\rho_{\Delta(\tau,\underline{m}-\underline{1}),\underline{s}}$. Thus, $\mathcal{D}_{\psi,kn}(E(f_{\Delta(\tau,\underline{m}),\underline{s}}))\circ r$ is an Eisenstein series attached to $|\det\cdot|^{\frac{kn-1}{2}}\rho_{\Delta(\tau,\underline{m}-\underline{1}),\underline{s}}$.
\end{thm}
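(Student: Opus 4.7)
I would prove Theorem \ref{thm 2.2} by unfolding the Eisenstein series against the unipotent integration defining $\mathcal{D}_{\psi,kn}$, and then identifying the resulting integral as an Eisenstein series on $\GL_{(m-k)n}(\BA)$. Substituting the definition of $E(f_{\Delta(\tau,\underline{m}),\underline{s}})(g)$ as a sum over $P_{n\underline{m}}(F)\backslash \GL_{mn}(F)$ into the integral
$$\mathcal{D}_{\psi,kn}(E(f_{\Delta(\tau,\underline{m}),\underline{s}}))(r(g))=\int_{[V_{(m-k)n,1^{kn}}]} E(f_{\Delta(\tau,\underline{m}),\underline{s}})(v\,r(g))\,\psi^{-1}_{V_{(m-k)n,1^{kn}}}(v)\,dv$$
and interchanging sum and integral (justified for $\Re(s_i-s_{i+1})\gg 0$ by standard Godement majorization applied to $\rho_{\Delta(\tau,\underline{m}),\underline{s}}$), one reduces the computation to a sum over double cosets in $P_{n\underline{m}}(F)\backslash \GL_{mn}(F)/V_{(m-k)n,1^{kn}}(F)$, acted on appropriately by $r(g)$.

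\textbf{Orbit reduction.} The main step, and the principal obstacle, is to show that only the double coset represented by $w_0$ contributes. For each representative $\gamma$, the unfolded contribution is an integral of $f_{\Delta(\tau,\underline{m}),\underline{s}}(\gamma \cdot r(g))$ against $\psi^{-1}_{V_{(m-k)n,1^{kn}}}$ over $V^\gamma(F)\backslash V_{(m-k)n,1^{kn}}(\BA)$, where $V^\gamma=V_{(m-k)n,1^{kn}}\cap \gamma^{-1}P_{n\underline{m}}\gamma$. A double coset dies at once when $\psi_{V_{(m-k)n,1^{kn}}}$ restricts non-trivially to $V^\gamma(F)$. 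For the remaining cosets, the integration factors through a unipotent period on some Speh block $\Delta(\tau,m_i)$ corresponding to a partition of $m_in$ strictly larger (in the dominance order) than $(n^{m_i})$; since $\mathcal{O}(\Delta(\tau,m_i))=\{(n^{m_i})\}$ by \cite{MW89} (cf. also Prop. \ref{prop 1.4} and Theorem \ref{thm 1.3}), such periods vanish. This enumeration, carried out via a Bruhat-type analysis of Weyl representatives compatible with the block shape $n\underline{m}$, isolates $w_0$ as the unique surviving orbit.

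\textbf{Computing the $w_0$ contribution.} The stabilizer $V^{w_0}$ decomposes, via an Iwasawa-type factorization dictated by the block structure of $w_0$, into a complement in $V_{(m-k)n,1^{kn}}$ parametrized by $V_{n^k}(\BA)\cdot \mathcal{B}(\BA)\cdot Z_{kn}(\BA)$: $\mathcal{B}$ is the lower-nilpotent block strip described in the theorem (the off-diagonal couplings between distinct Speh blocks among the first $(m-k)n$ rows), $V_{n^k}$ parametrizes the couplings between the $k$ outgoing $n$-rows, and $Z_{kn}$ is the Whittaker coordinate of $\mathcal{D}_{\psi,kn}$ on the last $kn$ rows. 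The character $\psi_{V_{(m-k)n,1^{kn}}}$, transported through $w_0$, breaks into $\psi_{Z_{kn}}$ on the $V_{n^k}$-factor together with $k$ internal row-Whittaker characters, one on the last $n$-row of each Speh block. Pulling these $k$ internal Whittaker integrations inside the induced section replaces each Speh value of $f_{\Delta(\tau,\underline{m}),\underline{s}}$ by its Bernstein--Zelevinsky derivative, producing the tensor $\otimes^k\mathcal{D}_{\psi,n}$ in \eqref{2.7}; the permutation $\epsilon_0$ appears in order to reverse the $n$-blocks so that the resulting Whittaker coordinates align consistently with $Z_{kn}$. Finally, the residual sum over
$$(P_{n\underline{m}}(F)\cap w_0\,\GL_{(m-k)n}(F)\,w_0^{-1})\backslash w_0\,\GL_{(m-k)n}(F)\,w_0^{-1}=P_{n(\underline{m}-\underline{1})}(F)\backslash \GL_{(m-k)n}(F)$$
becomes the Eisenstein sum on the right-hand side of \eqref{2.6}.

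\textbf{Section property, convergence, and the shift.} The factor $|\det\cdot|^{(kn-1)/2}$ comes from the ratio $\delta^{1/2}_{P_{n\underline{m}}}/\delta^{1/2}_{P_{n(\underline{m}-\underline{1})}}$ restricted to $\GL_{(m-k)n}$, embedded via $w_0$ into $\GL_{mn}$; this is a direct computation with block modular characters. Absolute convergence of $\Lambda(f_{\Delta(\tau,\underline{m}),\underline{s}})$ in the range $\Re(s_i-s_{i+1})\gg 0$ follows from the known absolute convergence of each $\mathcal{D}_{\psi,n}$ applied to $\Delta(\tau,m_i)$, combined with compactness of the $V_{n^k}$ Whittaker integration and convergence of the $\mathcal{B}$-integration against a Schwartz-type bound on $f$; meromorphic continuation to $\BC^k$ is inherited from the Eisenstein series on the left. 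That $\Lambda(f_{\Delta(\tau,\underline{m}),\underline{s}})$ is a smooth meromorphic section of $|\det\cdot|^{(kn-1)/2}\rho_{\Delta(\tau,\underline{m}-\underline{1}),\underline{s}}$ is a direct equivariance check on $P_{n(\underline{m}-\underline{1})}(\BA)$, using the $\GL_{(m_i-1)n}(\BA)$-equivariance of each $\mathcal{D}_{\psi,n}:\Delta(\tau,m_i)\to \Delta(\tau,m_i-1)$. The hardest part remains the orbit reduction, where one must rule out every non-$w_0$ representative, using in some cases the non-triviality of $\psi$ on the stabilizer and in others the maximality of the partition $(n^{m_i})$ in $\mathcal{O}(\Delta(\tau,m_i))$.
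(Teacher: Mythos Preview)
Your overall strategy matches the paper's: unfold the Eisenstein series, show that a single double coset (represented by $w_0$) survives, and identify its contribution as an Eisenstein series on $\GL_{(m-k)n}$. Two differences in execution are worth flagging.

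For the orbit reduction, the paper proceeds in two stages. It first unfolds over $P_{n\underline{m}}(F)\backslash\GL_{mn}(F)/P_{(m-k)n,kn}(F)$, using cuspidality of $\tau$ to restrict to Weyl elements $w_{\underline{r}}$ permuting $n$-blocks (parametrized by $(r_i,\ell_i)$ with $r_i+\ell_i=m_i$, $\sum\ell_i=k$). The vanishing of the terms with some $\ell_i\geq 2$ is not obtained from the full wavefront set $\mathcal{O}(\Delta(\tau,m_i))=(n^{m_i})$, but from the more elementary observation that the inner integral is a constant term along $V_{r_in,\ell_in}$ (yielding $\Delta(\tau,r_i)\otimes\Delta(\tau,\ell_i)$ up to twist) followed by a Whittaker coefficient on the $\GL_{\ell_in}$-block, which vanishes since $\Delta(\tau,\ell_i)$ is non-generic. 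Once $\ell_i=1$ for all $i$, a second inner reduction over $P_{n^k}(F)\backslash\GL_{kn}(F)/Z_{kn}(F)$ isolates $\epsilon_0$ by the character-nontriviality argument you describe; so $\epsilon_0$ is not merely an alignment device but the unique surviving inner representative.

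For the meromorphic continuation, the paper does not inherit it from the Eisenstein series on the left, but writes $\Lambda$ explicitly as the composition of three operators: the constant term on each Speh factor along $V_{(m_i-1)n,n}$, a standard intertwining operator $M$ taking the resulting induced representation to the form \eqref{2.18}, and a Jacquet integral $\mathcal{W}_{\psi_{Z_{nk}}}$ on the last $kn$-block. Each factor carries its own meromorphic continuation, and this explicit decomposition (see \eqref{2.20}) is what makes the holomorphy region in Theorem~\ref{thm 2.3} accessible; your indirect argument would give continuation but not this control.
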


\begin{proof}

Denote for short, $\rho_{\underline{s}}=\rho_{\Delta(\tau,\underline{m}),\underline{s}}$ $f_{\underline{s}}=f_{\Delta(\tau,\underline{m}),\underline{s}}$. 
For $Re(s_i-s_{i+1})\gg 0$, $1\leq i\leq k-1$,\\
\\
$\mathcal{D}_{\psi,kn}(E(f_{\underline{s}}))(r(g))=$
\begin{equation}\label{2.8}
=\int\limits_{[V_{(m-k)n,1^{kn}}]}\sum\limits_w\sum\limits_{\gamma\in P^w(F)\backslash P_{(m-k)n,kn}(F)}f_{\underline{s}}(w\gamma vr(g))\psi_{V_{(m-k)n,1^{kn}}}^{-1}(v)dv.
\end{equation}
See \eqref{1.25}. Here, $P^w=P_{(m-k)n,kn}\cap w^{-1}P_{n\underline{m}}w$, and $w$ is summed over\\ $P_{n\underline{m}}(F)\backslash \GL_{mn}(F)/P_{(m-k)n,kn}(F)$. Since $\tau$ is cuspidal on $\GL_n(\BA)$, it suffices to consider only Weyl elements of $\GL_{mn}(F)$, which permute the blocks of the Levi subgroup $M_{n^m}(F)$. Thus, the following elements form a set of relevant representatives out of $P_{n\underline{m}}(F)\backslash \GL_{mn}(F)/P_{(m-k)n,kn}(F)$.
\begin{equation}\label{2.9}
w=w_{\underline{r}}=(\diag(w_1^1,...,w^1_k),\diag(w_1^2,...,w_k^2)),
\end{equation}
$$
w_i^1=\begin{pmatrix}I_{nr_i}\\0\end{pmatrix}\in M_{m_in\times r_in}(F),\ \ 
w_i^2=\begin{pmatrix}0\\I_{\ell_in}\end{pmatrix}\in M_{m_in\times\ell_in}(F),
$$
$$
r_i,\ell_u\geq 0,\ \ \ r_i+\ell_i=m_i,\ \ 1\leq i\leq k
$$
$$
r_1+\cdots+r_k=m-k.
$$
Note that $\ell_1+\cdots+\ell_k=k$.
The subgroup $P^{w_{\underline{r}}}$ consists of the elements of the form
\begin{equation}\label{2.10}
h=\begin{pmatrix}A^1&A^2\\0&A^3\end{pmatrix},
\end{equation}
$$
A^i=\begin{pmatrix}a^i_{1,1}&a^i_{1,2}&\dots&a^i_{1,k}\\&a^i_{2,2}&\dots&a^i_{2,k}\\&&\ddots\\&&&a^i_{k,k}\end{pmatrix},
$$
where, for $1\leq i\leq j\leq k$,
$$
a^1_{i,j}\in M_{r_in\times r_jn}(F),\ a^2_{i,j}\in M_{r_in\times \ell_jn}(F),\ \ 	a^3_{i,j}\in M_{\ell_in\times \ell_jn}(F).
$$
For the element \eqref{2.10}, we have
\begin{equation}\label {2.11}
whw^{-1}=\begin{pmatrix}B_{1,1}&B_{1,2}&\dots&B_{1,k}\\&B_{2,2}&\dots&B_{2,k}\\&&\ddots\\&&&B_{k,k}\end{pmatrix},
\end{equation}
where, for $1\leq i\leq j\leq k$,
$$
B_{i,j}=\begin{pmatrix}a_{i,j}^1&a^2_{i,j}\\&a^3_{i,j}\end{pmatrix}.
$$
We will show that only $w_{\underline{r}}$, with $\ell_1=\cdots=\ell_k=1$ (i.e. $r_i=m_i-1$, $1\leq i\leq k$) contributes to \eqref{2.8}. The contribution of $w_{\underline{r}}$ to \eqref{2.8} is
\begin{equation}\label{2.12}
\int\limits_{[V_{(m-k)n,1^{kn}}]}\sum\limits_{\epsilon'} \sum\limits_{\eta\in P^{\underline{r},\epsilon'}(F)\backslash P_{(m-k)n,1^{kn}}(F)} f_{\underline{s}}(w_{\underline{r}}\epsilon'\eta vr(g))\psi_{V_{(m-k)n,1^{kn}}}^{-1}(v)dv,
\end{equation}
where $\epsilon'$ runs over
$$
P^{w_{\underline{r}}}(F)\backslash P_{(m-k)n,kn}(F)/P_{(m-k)n,1^{kn}}(F)\cong P_{n\underline{\ell}}(F)\backslash \GL_{kn}(F)/V_{1^{kn}}(F),
$$
and $P^{\underline{r},\epsilon'}=P_{(m-k)n,1^{kn}}\cap (\epsilon')^{-1}P^{w_{\underline{r}}} \epsilon'$. We may take $\epsilon'=\diag(I_{(m-k)n},\epsilon)$, where $\epsilon$ is in the Weyl group of $\GL_{kn}$, modulo the Weyl group of $M_{n\underline{\ell}}$ from the left. Again, due to the cuspidality of $\tau$, we may assume that $\epsilon$ permutes the blocks of $M_{n^k}$. The subgroup $P^{\underline{r},\epsilon'}$ cosists of the elements of the form
\begin{equation}\label{2.13}
h=\begin{pmatrix}A&C\\0&z\end{pmatrix},
\end{equation}where $A\in P_{n\underline{r}}$ (as $A^1$ in \eqref{2.10}), $C\epsilon^{-1}$ has the form of $A^2$ in \eqref{2.10}, and $z\in V_{1^{kn}}\cap \epsilon^{-1}P_{n\underline{\ell}}\epsilon$. Denote
$$
V^{\underline{r},\epsilon}=V_{(m-k)n,1^{kn}}\cap (\epsilon')^{-1}P^{w_{\underline{r}}}\epsilon'.
$$
The elements of this subgroup have the form \eqref{2.13} with $A=I_{(m-k)n}$. For a given $\underline{r}$, the contribution of $\epsilon$ to \eqref{2.12} is 
\begin{equation}\label{2.14}
\sum\limits_{\gamma\in P_{n\underline{r}}(F)\backslash \GL_{(m-k)n}(F)}\int\limits_{V^{\underline{r},\epsilon}(\BA)\backslash V_{(m-k)n,1^{kn}}(\BA)}\int\limits_{[V^{\underline{r},\epsilon}]} f_{\underline{s}}(w_{\underline{r}}\epsilon'uvr(\gamma g))\psi_{V_{(m-k)n,1^{kn}}}^{-1}(v)dudv.
\end{equation}
Let $\sigma$ be the permutation of $\{1,...,k\}$, such that if $z_i\in V_{1^{n\ell_{\sigma(i)in}}}(\BA)$, $1\leq i\leq k$, then $\epsilon\diag(z_1,...,z_k)\epsilon^{-1}=\diag(z_{\sigma^{-1}(1)},...,z_{\sigma^{-1}(k)})$. Consider the subgroup $V^{\underline{r},\epsilon}_0(\BA)$ of elements of the form
\begin{equation}\label{2.15}
\begin{pmatrix}I_{(m-k)n}&A^2\epsilon\\0&z\end{pmatrix},
\end{equation}
where $z=diag(z_1,...,z_k)$, $z_i\in V_{1^{n\ell_{\sigma(i)in}}}(\BA)$, $1\leq i\leq k$, and $A^2$ is as in \eqref{2.10} with adele coordinates. By \eqref{2.11}, conjugation of \eqref{2.15} by $w_{\underline{r}}$, will result with an inner integration to the $du$-integration in \eqref{2.14}, with the application to $\Delta(\tau,m_i)$, $1\leq i\leq k$, of the constant term along $V_{nr_i,n\ell_i}$, followed by the Whittaker coefficient for the block $\GL_{\ell_in}$, that is $\diag(I_{r_in},z)\mapsto \psi_{Z_{\ell_in}}(z)$, $z\in V_{1^{\ell_in}}(\BA)$. The last constant term takes $\Delta(\tau,m_i)$ to $\Delta(\tau,r_i)|\cdot|^{-\frac{\ell_i}{2}}\otimes \Delta(\tau,\ell_i)|\cdot|^{\frac{r_i}{2}}$. Since $\Delta(\tau,\ell_i)$ is not generic, for $\ell_i\geq 2$, we conclude that if the contribution of $w_{\underline{r}}$ to \eqref{2.8} is nonzero, then we must have $\ell_i\leq 1$, for all $1\leq i\leq k$. By \eqref{2.9}, this means that $\ell_1=\cdots=\ell_k=1$. 

Note that for $\underline{r}^0=(m_1-1,...,m_k-1)$, $w_{\underline{r}^0}=w_0$ (see \eqref{2.5}). For this element, let us show that only $\epsilon=\epsilon_0$ contributes to \eqref{2.14}.
We assume that $\epsilon$ is a permutation matrix, permuting the blocks of $M_{n^k}$. Thus, $\epsilon$ is a $k\times k$ matrix of $n\times n$ blocks which are zero or $I_n$. Assume that the first block row of $\epsilon$ has the form 
$(0,...,I_n,...,0)$, where $I_n$ stands in the $j$-th block, with $j<k$. Then we can find $z_0\in Z_{nk}(\BA)$, such that $\diag(I_{(m-k)n},z_0) \in (\epsilon')^{-1}w^{-1}_0V_{n\underline{m}}(\BA)w_0\epsilon'$, with $\psi_{V_{(m-k)n,1^{kn}}}(\diag(I_{(m-k)n},z_0))\neq 1$. For this, take $z_0$, such that, for $1\leq i\neq j\leq k$, the $i$-th block row is the $i$-th block row of $I_{nk}$, and the $j$-th block row of $z_0$ has the form $(0,...,I_n,x,...)$, where $I_n$ stands in the $j$-th block. Then $\psi_{V_{(m-k)n,1^{kn}}}(\diag(I_{(m-k)n},z_0))=\psi_{Z_{nk}}(z_0)=\psi(x_{n,1})$. We conclude that we must have 
$$
\epsilon=\begin{pmatrix}0&I_n\\\epsilon_1&0\end{pmatrix}.
$$
Repeating the argument, we get that we must have $\epsilon=\epsilon_0$. Thus, \eqref{2.8} is equal to \eqref{2.14}, with $\underline{r}=\underline{r}^0$ and $\epsilon=\epsilon_0$, and then it is easy to see that \eqref{2.14} is equal to the r.h.s. of \eqref{2.6}.

Let us consider \eqref{2.7} (for $Re(s_i-s_{i+1})\gg 0$, $1\leq i\leq k-1$). We can write it as the composition of the following three operators on $f_{\underline{s}}$: first, we take on each inducing factor $\Delta(\tau,m_i)$, the constant term along $V_{(m_i-1)n,n}$. This takes $f_{\underline{s}}$ to $\tilde{f}_{\underline{s}}$, which lies in the parabolic induction
\begin{equation}\label{2.16}
\Delta(\tau,m_1-1)|\cdot|^{s_1-\frac{1}{2}}\times\tau|\cdot|^{s_1+\frac{m_1-1}{2}}\times\cdots\times \Delta(\tau,m_k-1)|\cdot|^{s_k-\frac{1}{2}}\times\tau|\cdot|^{s_k+\frac{m_k-1}{2}}.
\end{equation}
Then we apply the intertwining operator
\begin{equation}\label{2.17}
M(\tilde{f}_{\underline{s}})(h)=\int\limits_{\mathcal{B}(\BA)}\tilde{f}_{\underline{s}}(w_0\begin{pmatrix}(I_{(m-k)n}&b\\&I_{kn}\end{pmatrix})h)db.
\end{equation}
This operator has a meromorphic continuation to $\BC^k$. It takes the paraolic induction \eqref{2.16} to
\begin{equation}\label{2.18}
\Delta(\tau,m_1-1)|\cdot|^{s_1-\frac{1}{2}}\times\cdots\times \Delta(\tau,m_k-1)|\cdot|^{s_k-\frac{1}{2}}\times \tau|\cdot|^{s_1+\frac{m_1-1}{2}}\times\cdots\times\tau|\cdot|^{s_k+\frac{m_k-1}{2}}
\end{equation}
$$
\cong \rho_{\Delta(\tau,\underline{m}-\underline{1}),\underline{s}-\frac{1}{2}\cdot \underline{1}}\times (\tau|\cdot|^{s_1+\frac{m_1-1}{2}}\times\cdots\times\tau|\cdot|^{s_k+\frac{m_k-1}{2}}).
$$
The third operator, applied to \eqref{2.17} is
\begin{equation}\label{2.19}
id_{\rho_{\Delta(\tau,\underline{m}-\underline{1}),\underline{s}-\frac{1}{2}\cdot \underline{1}}}\otimes \mathcal{W}_{\psi_{Z_{nk}}},
\end{equation}
where $\mathcal{W}_{\psi_{Z_{nk}}}$ is the Jacquet integral, giving the $\psi_{Z_{nk}}$-Whittaker functional for $\tau|\cdot|^{s_1+\frac{m_1-1}{2}}\times\cdots\times\tau|\cdot|^{s_k+\frac{m_k-1}{2}}$. For $\varphi_{\underline{s}+\frac{1}{2}\cdot(\underline{m}-\underline{1})}$ in $\tau|\cdot|^{s_1+\frac{m_1-1}{2}}\times\cdots\times\tau|\cdot|^{s_k+\frac{m_k-1}{2}}$,\\
\\
$\mathcal{W}_{\psi_{Z_{nk}}}(\varphi_{\underline{s}+\frac{1}{2}\cdot(\underline{m}-\underline{1})})=$
$$
=\int\limits_{V_{n^k}(\BA)}\int\limits_{[Z_n]^k}\varphi_{\underline{s}+\frac{1}{2}\cdot(\underline{m}-\underline{1})}(\begin{pmatrix}z_1&\\&\ddots\\&&z_k\end{pmatrix}\epsilon_0v)\psi^{-1}_{Z_n}(z_1\cdots z_k)\psi^{-1}_{Z_{kn}}(v)d(z_1,...,z_k)dv.
$$
This is meromorphic in $\BC^n$. All in all, 
\begin{equation}\label{2.20}
\Lambda(f_{\underline{s}})=(id_{\rho_{\Delta(\tau,\underline{m}-\underline{1}),\underline{s}-\frac{1}{2}\cdot \underline{1}}}\otimes \mathcal{W}_{\psi_{Z_{nk}}})\circ M(\tilde{f}_{\underline{s}})\Big|_{r(\GL_{(m-k)n}(\BA))}
\end{equation}
is a smooth meromorphic section of $|\det\cdot|^{\frac{kn-1}{2}}\rho_{\Delta(\tau,\underline{m}-\underline{1}),\underline{s}}$.
This cocludes the proof of the theorem.

\end{proof}

\begin{rmk}\label{rmk 2.1}
Theorem \ref{thm 2.2} is valid with a similar proof when we take an Eisenstein series attached to $\Delta(\tau_1,m_1)|\cdot|^{s_1}\times\cdots\times\Delta(\tau_k,m_k)|\cdot|^{s_k}$, where $\tau_i$, $1\leq i\leq k$, is an irreducible, cuspidal, automorphic representation of $\GL_{n_i}(\BA)$. Then the application of $\mathcal{D}_{\psi,n_1+\cdots+n_k}$ gives an Eisenstein series attached to 	$\Delta(\tau_1,m_1-1)|\cdot|^{s_1}\times\cdots\times\Delta(\tau_k,m_k-1)|\cdot|^{s_k}$.	
\end{rmk}
\begin{rmk}\label{rmk 2.2}
As we pointed out in the introduction, Theorem \ref{thm 2.2} is a global analog of the calculation of the Bernstein-Zelevinsky derivative of order $kn$ of the parabolic induction of $k$ segment representations. See Lemma 4.5 in \cite{BZ77} and Theorem 3.5 in \cite{Ze80}. (The same holds for the more general form as in Remark \ref{rmk 2.1}.)
\end{rmk}

\subsection{The section $\Lambda(f_{\Delta(\tau,\underline{m}),\underline{s}})$}

We specify a domain of holomorphy of the section $\Lambda(f_{\Delta(\tau,\underline{m}),\underline{s}})$. 

\begin{thm}\label{thm 2.3}

The section $\Lambda(f_{\Delta(\tau,\underline{m}),\underline{s}})$ is holomorphic at $\underline{s}$, such that
\begin{equation}\label{2.21}
Re(s_i-s_j)\geq\frac{m_j-m_i}{2},\ \ \forall 1\leq i<j\leq k.
\end{equation}

\end{thm}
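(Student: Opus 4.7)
The plan is to exploit the factorization of $\Lambda(f_{\underline s})$ established in the proof of Theorem 2.2 (equation \eqref{2.20}),
\[
\Lambda(f_{\underline s}) = \bigl(\mathrm{id}_{\rho_{\Delta(\tau,\underline{m}-\underline{1}),\underline s-\frac12\underline 1}}\otimes \mathcal W_{\psi_{Z_{nk}}}\bigr)\circ M\circ C,
\]
where $C$ is the constant-term operator along the $V_{(m_i-1)n,n}$ applied to each $\Delta(\tau,m_i)$, $M$ is the intertwining integral \eqref{2.17}, and $\mathcal W_{\psi_{Z_{nk}}}$ is the Jacquet--Whittaker functional on the ``trailing'' $\tau\times\cdots\times\tau$-piece. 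The operator $C$ is entire in $\underline s$, so the analytic behavior of $\Lambda$ is dictated by $M$ and $\mathcal W_{\psi_{Z_{nk}}}$.

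For the intertwining operator $M$, I would decompose it into a product of $\binom{k}{2}$ elementary intertwiners, each moving one factor $\tau|\cdot|^{a_i}$, with $a_i=s_i+(m_i-1)/2$, past one factor $\Delta(\tau,m_j-1)|\cdot|^{b_j}$, with $b_j=s_j-1/2$, for $i<j$. Realizing $\Delta(\tau,m_j-1)|\cdot|^{b_j}$ as the Langlands quotient of the principal series in $m_j-1$ copies of $\tau$ twisted by $c_{j,\ell}=s_j+(m_j-2\ell-1)/2$, each elementary intertwiner further factors through $m_j-1$ rank-one operators swapping $\tau|\cdot|^{a_i}$ and $\tau|\cdot|^{c_{j,\ell}}$. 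By the Shahidi--Jacquet--Shalika theory for intertwining operators attached to cuspidal data, the rank-one operator normalized by $L^S(1+a_i-c_{j,\ell},\tau\times\tilde\tau)/L^S(a_i-c_{j,\ell},\tau\times\tilde\tau)$ is entire, and $L^S(z,\tau\times\tilde\tau)$ is holomorphic for $\mathrm{Re}(z)\ge 1$ apart from a simple pole at $z=1$ (Jacquet--Shalika). A direct computation gives $a_i-c_{j,\ell} = s_i-s_j-(m_j-m_i)/2+\ell$ for $\ell=1,\dots,m_j-1$, so condition \eqref{2.21} forces $\mathrm{Re}(a_i-c_{j,\ell})\ge \ell\ge 1$. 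Hence all the $L$-arguments land in the half-plane of holomorphy of both numerator and denominator except possibly at the extremal boundary value $\ell=1$ with $\mathrm{Re}(s_i-s_j)=(m_j-m_i)/2$.

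For $\mathcal W_{\psi_{Z_{nk}}}$ applied to $\tau|\cdot|^{a_1}\times\cdots\times\tau|\cdot|^{a_k}$, the standard Jacquet integral converges absolutely on $\mathrm{Re}(a_i-a_{i+1})>0$ and extends meromorphically, the poles again being governed by $L^S(z,\tau\times\tilde\tau)$. Specializing \eqref{2.21} to $j=i+1$ yields $\mathrm{Re}(a_i-a_{i+1})\ge 0$, the closure of the convergence cone. Shahidi's holomorphy theorem for Whittaker functionals on generic cuspidal data then gives holomorphy of $\mathcal W_{\psi_{Z_{nk}}}$ on this closure.

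The main obstacle is verifying that the potential boundary singularity of $M$ at $\mathrm{Re}(s_i-s_j)=(m_j-m_i)/2$ does not actually survive on the Speh quotient. Unlike the case of the full principal series, where this pole is genuine, on $\Delta(\tau,m_j-1)$ the atomic contributions at $\ell=1,\dots,m_j-2$ combine according to Zelevinsky's segment calculus, and only the linking-type poles identified by Moeglin--Waldspurger \cite{MW89} can survive; the composition with the descent $\otimes^k\mathcal D_{\psi,n}$ and the Whittaker integration $\mathcal W_{\psi_{Z_{nk}}}$ is expected to neutralize the remaining extremal one. Making this cancellation explicit, via the unramified computation and Shahidi's local coefficients, is the technically delicate step; once it is carried out, \eqref{2.21} is precisely the half-space of parameters in which every $L$-factor contribution entering $\Lambda(f_{\underline s})$ is holomorphic, which yields the claim.
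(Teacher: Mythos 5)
Your overall architecture matches the paper's: factor $\Lambda(f_{\underline s})$ as in \eqref{2.20} into the (entire) constant-term operator, the intertwining operator $M$ of \eqref{2.17}, and the Whittaker functional $\mathcal W_{\psi_{Z_{nk}}}$, and then track poles through $L$-functions. Your telescoping of the rank-one factors reproduces the paper's normalizing ratio $L(\tau\times\hat\tau,\,s_i-s_j+\tfrac{m_i-m_j+2}{2})\big/L(\tau\times\hat\tau,\,s_i-s_j+\tfrac{m_i+m_j}{2})$ from \eqref{2.22}, and you correctly isolate the only dangerous contribution: the pole of the numerator at argument $1$, i.e.\ at $s_i-s_j=\tfrac{m_j-m_i}{2}$, which sits on the boundary of \eqref{2.21}.

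However, the decisive step is missing: you write that the descent and the Whittaker integration are ``expected to neutralize'' this pole and that making the cancellation explicit ``is the technically delicate step; once it is carried out \dots''. That step \emph{is} the proof of the theorem, and your heuristic for it points in the wrong direction. The pole is not removed by Zelevinsky's segment calculus or by passing to the Speh quotient --- the normalized operators are holomorphic in the relevant range by \cite{MW89}, I.10, so the pole genuinely lives in the normalizing $L$-ratio of $M$ itself. What kills it is the Casselman--Shalika formula for $\mathcal W_{\psi_{Z_{nk}}}$ applied to $\tau|\cdot|^{a_1}\times\cdots\times\tau|\cdot|^{a_k}$ with $a_i=s_i+\tfrac{m_i-1}{2}$: on factorizable data it produces exactly the reciprocal partial $L$-function $\prod_{i<j}L^S(\tau\times\hat\tau,\,a_i-a_j+1)^{-1}$ times finitely many local Jacquet integrals at $v\in S$ (which are holomorphic). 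Multiplying against \eqref{2.22}, the global $L(\tau\times\hat\tau,\,s_i-s_j+\tfrac{m_i-m_j+2}{2})$ in the numerator is cancelled down to the finite product $L_S$ of local factors over $v\in S$, leaving $L_S(\tau\times\hat\tau,\,s_i-s_j+\tfrac{m_i-m_j+2}{2})\big/L(\tau\times\hat\tau,\,s_i-s_j+\tfrac{m_i+m_j}{2})$, which is holomorphic precisely under \eqref{2.21} because local $L$-factors of the unitary generic $\tau_v\times\hat\tau_v$ are holomorphic for real part of the argument $\geq 1$, and a pole of the denominator is harmless. Without exhibiting this cancellation (it is an identity, not a cancellation that needs Shahidi's local coefficients), the argument does not establish holomorphy on the boundary hyperplanes of \eqref{2.21}, so the proof as written is incomplete. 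A minor additional caveat: the normalized rank-one operators are not ``entire'' in general; what is available (and all that is needed here) is their holomorphy in the region guaranteed by \cite{MW89}, I.10.
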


\begin{proof}
	
Let us start with the intertwining operator \eqref{2.17}. We can write it as the composition of elementary intertwining operators, described according to the 
following simple permutations on the inducing data of \eqref{2.16}. The first elementary intertwining operator flips
$$
\tau|\cdot|^{s_1+\frac{m_1-1}{2}}\times \Delta(\tau,m_2-1)|\cdot|^{s_2-\frac{1}{2}}\rightarrow \Delta(\tau,m_2-1)|\cdot|^{s_2-\frac{1}{2}}\times \tau|\cdot|^{s_1+\frac{m_1-1}{2}}.
$$
Then we carry the next two flips, first for $i=2$, then for $i=1$,
$$
\tau|\cdot|^{s_i+\frac{m_i-1}{2}}\times \Delta(\tau,m_3-1)|\cdot|^{s_3-\frac{1}{2}}\rightarrow \Delta(\tau,m_3-1)|\cdot|^{s_3-\frac{1}{2}}\times \tau|\cdot|^{s_i+\frac{m_i-1}{2}}.
$$
At stage $j-1$, $2\leq j\leq k$, flip, for $i=j-1,j-2,...,1$, in this order,	
$$
\tau|\cdot|^{s_i+\frac{m_i-1}{2}}\times \Delta(\tau,m_j-1)|\cdot|^{s_j-\frac{1}{2}}\rightarrow \Delta(\tau,m_j-1)|\cdot|^{s_j-\frac{1}{2}}\times \tau|\cdot|^{s_i+\frac{m_i-1}{2}}.
$$
Let $S$ be a finite set of places, containing the archimedean places, outside which $\tau$ is unramified. We may assume that $\tilde{f}_{\underline{s}}$ is decomposable. Let $v\notin S$, and let $\tilde{f}^0_{\underline{s},v}$, $\varphi^0_{\underline{s},v}$ be the unramified vectors in \eqref{2.16}, \eqref{2.18}, at $v$, normalized such that
$$
\tilde{f}^0_{\underline{s},v}(I_{mn})=\xi_{1,v}^0\otimes e_v^0\otimes\cdots \xi_{k,v}^0\otimes e_v^0,
$$
$$
\varphi^0_{\underline{s},v}(I_{mn})=\xi^0_{1,v}\otimes\cdots\otimes \xi^0_{k,v}\otimes e^0_v\otimes\cdots e^0_v,
$$
where $\xi^0_{i,v}$ are unramified vectors for the factor at $v$, $\Delta(\tau_v,m_i-1)$, of $\Delta(\tau,m_i-1)$, $1\leq i\leq k$, and $e_v^0$ is an unramified vector for $\tau_v$. Then
$$
M_v(\tilde{f}^0_{\underline{s},v})=\prod\limits_{1\leq i<j\leq k}\frac{L(\tau_v\times \Delta(\hat{\tau}_v,m_j-1),s_i-s_j+\frac{m_i}{2})}{L(\tau_v\times \Delta(\hat{\tau}_v,m_j-1),s_i-s_j+\frac{m_i}{2}+1)}\varphi^0_{\underline{s},v}=
$$
$$
=\prod\limits_{1\leq i<j\leq k}\frac{L(\tau_v\times \hat{\tau}_v,s_i-s_j+\frac{m_i-m_j+2}{2})}{L(\tau_v\times \hat{\tau}_v,s_i-s_j+\frac{m_i+m_j}{2})}\varphi^0_{\underline{s},v}.
$$		
Write
\begin{equation}\label{2.22}
M(\tilde{f}_{\underline{s}})=\prod\limits_{1\leq i<j\leq k}\frac{L(\tau\times \hat{\tau},s_i-s_j+\frac{m_i-m_j+2}{2})}{L(\tau\times \hat{\tau},s_i-s_j+\frac{m_i+m_j}{2})}(\otimes_vN_v(\tilde{f}_{\underline{s},v})),
\end{equation}
where $N_v$ is the local normalized intertwining operator (we dropped the corresponding epsilon factors). At almost all places $v\notin S$, we have $N_v(\tilde{f}^0_{\underline{s},v})=\varphi^0_{\underline{s}}$. Since $\tau_v$ and $\Delta(\tau_v,m_i-1)$ are unitary, we know from \cite{MW89}, I.10, that $N_v(\tilde{f}_{\underline{s},v})$ is holomorphic when
\begin{equation}\label{2.23} 
Re(s_i-s_j)\geq -\frac{m_i}{2},\ \ \forall 1\leq i<j\leq k.
\end{equation} 
In order to obtain $\Lambda(f_{\underline{s}})$, it remains to apply \eqref{2.19} to \eqref{2.22}. Consider a decomposable smooth, holomorphic section $\xi_{\underline{s}+\frac{1}{2}\cdot (\underline{m}-\underline{1})}$ of $\tau|\cdot|^{s_1+\frac{m_1-1}{2}}\times\cdots\times \tau|\cdot|^{s_k+\frac{m_k-1}{2}}$. Assume that, for all $v$, $\tau_v$ is realized in is $\psi_v$-Whittaker model $W(\tau_v,\psi_v)$. Assume also that, for $v\notin S$, $\xi_{\underline{s}+\frac{1}{2}\cdot (\underline{m}-\underline{1}),v}(I_{kn})=W^0_{\psi_v}\otimes\cdots W^0_{\psi_v}$, $k$ times, where $W^0_{\psi_v}$ is the unramified Whittaker function in $W(\tau_v,\psi_v)$, such that $W^0_{\psi_v}(I_n)=1$. Then by the Casselman-Shalika formula,\\
\\
$\mathcal{W}_{\psi_{Z_{kn}}}(\xi_{\underline{s}+\frac{1}{2}\cdot (\underline{m}-\underline{1})})=$
\begin{equation}\label{2.24}
=\prod\limits_{1\leq i<j\leq k}\frac{L_S(\tau\times\hat{\tau},s_i-s_j+\frac{m_i-m_j+2}{2})}{L(\tau\times\hat{\tau},s_i-s_j+\frac{m_i-m_j+2}{2})}\prod_{v\in S}\mathcal{W}_{\psi_{Z_{kn},v}}(\xi_{\underline{s}+\frac{1}{2}\cdot (\underline{m}-\underline{1}),v}),
\end{equation}
where $L_S(\tau\times\hat{\tau},s_i-s_j+\frac{m_i-m_j+2}{2})=\prod_{v\in S}L(\tau_v\times\hat{\tau}_v,s_i-s_j+\frac{m_i-m_j+2}{2})$, and for $v\in S$, 
$$
\mathcal{W}_{\psi_{Z_{kn},v}}(\xi_{\underline{s}+\frac{1}{2}\cdot (\underline{m}-\underline{1}),v})=\int\limits_{V_{n^k}(F_v)}\xi_{\underline{s}+\frac{1}{2}\cdot (\underline{m}-\underline{1}),v}(\epsilon_0v)\psi_{Z_{kn},v}^{-1}(v)dv.
$$
This is the local Jacquet integral, which we know is holomorphic. From \eqref{2.22}, \eqref{2.24}, we get that when $\underline{s}$ satisfies \eqref{2.23}, then for $\Lambda(f_{\underline{s}})(I_{(m-k)n})$ to be holomorphic, it suffices to require that $\frac{L_S(\tau\times\hat{\tau},s_i-s_j+\frac{m_i-m_j+2}{2})}{L(\tau\times \hat{\tau},s_i-s_j+\frac{m_i+m_j}{2})}$ is holomorphic, for all $1\leq i<j\leq k$.
Since $\tau_v$ is unitary and generic, for all $v\in S$, the last function is holomorphic when $Re(s_i-s_j)+\frac{m_i-m_j}{2}\geq 0$. This proves the theorem.	
\end{proof}

Let us write the section $\Lambda(f_{\underline{s}})$, as a decomposable section, for decomposable $f_{\underline{s}}$. Indeed, in \eqref{2.20}, the intertwining operator $M$ is decomposable and so is the $\psi_{Z_{nk}}$-Whittaker functional. Next, recall that $\tilde{f}_{\underline{s}}$ is obtained from $f_{\underline{s}}$ by applying to each inducing factor $\Delta(\tau,m_i)$ the constant term along $V_{(m_i-1)n,n}$. This produces, at each place $v$, a unique model of $\Delta(\tau_v,m_i)$, as follows. Fix a realization space for $\Delta(\tau_v,m_i-1)$, which we keep denoting by $\Delta(\tau_v,m_i-1)$, and realize $\tau_v$ in its $\psi_v$-standard Whittaker model. Now realize $\Delta(\tau_v,m_i)$ as the unique irreducible subrepresentation of $\Delta(\tau_v,m_i-1)|\cdot|^{-\frac{1}{2}}\times \mathcal{W}(\tau_v,\psi_v)|\cdot|^{\frac{m_i-1}{2}}$. Thus, there are embeddings
$\alpha_{i,v}: \Delta(\tau_v,m_i)\rightarrow\Delta(\tau_v,m_i-1)|\cdot|^{-\frac{1}{2}}\times \mathcal{W}(\tau_v,\psi_v)|\cdot|^{\frac{m_i-1}{2}}$, such that, for a decomposable section $f_{\underline{s}}$,
\begin{equation}\label{2.25}
\Lambda(f_{\underline{s}})=\otimes_v(id_{\rho_{\Delta(\tau_v,\underline{m}-\underline{1}),\underline{s}-\frac{1}{2}\cdot \underline{1}}}\otimes \mathcal{W}_{\psi_{Z_{nk},v}})\circ M_v((\alpha_v\circ f_{\underline{s},v})\Big|_{r(\GL_{(m-k)n}(F_v))}),
\end{equation}
where, $\alpha_v=\otimes_{i=1}^k\alpha_{i,v}$, and $\mathcal{W}_{\psi_{Z_{nk},v}}$ denotes a map assigning to $\xi_v$ in 
$\eta_v=\tau_v|\cdot|^{s_1+\frac{m_1-1}{2}}\times\cdots\times \tau_v|\cdot|^{s_k+\frac{m_k-1}{2}}$, the Whittaker function $h\mapsto W_v(\eta_v(h)\xi_v)$, where $W_v$ is a $\psi_{Z_{nk},v}$-Whittaker functional on $\eta_v$. It is unique up to scalars. Denote the $v$-component of \eqref{2.25} by $\Lambda_v(f_{\underline{s},v})$.

\begin{prop}\label{prop 2.4}
Fix a place $v$ and fix $\underline{s}^0\in \BC^k$ satisfying \eqref{2.21}. Consider the local sections $\Lambda_v(f_{\underline{s},v})$ at the point $\underline{s}^0$. View these as maps to $\rho_{\Delta(\tau_v,\underline{m}-\underline{1}),\underline{s}^0-\frac{1}{2}\cdot \underline{1}}$. Then, when $v$ is finite, these are surjective maps. When $v$ is Archimedean, their image is dense (in the Frechet topology).
\end{prop}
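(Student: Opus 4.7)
My plan is to prove the proposition by analyzing the factorization \eqref{2.25} of $\Lambda_v$ into three natural operators and showing that their composition is non-degenerate under the assumption \eqref{2.21}. The three operators are: (a) the $k$-fold embedding $\alpha_v = \otimes_i \alpha_{i,v}$, where each $\alpha_{i,v}$ realizes the Speh $\Delta(\tau_v, m_i)$ as the unique irreducible subrepresentation of $\Delta(\tau_v, m_i-1)|\cdot|^{-1/2}\times \mathcal{W}(\tau_v, \psi_v)|\cdot|^{(m_i-1)/2}$, composed in the natural way with induction up to $\GL_{mn}(F_v)$; (b) the local intertwining operator $M_v$, in the normalized form of \eqref{2.22}; and (c) the local Jacquet--Whittaker integral $\mathcal{W}_{\psi_{Z_{nk},v}}$ applied to the tail factors.

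For each operator, I would verify non-degeneracy in isolation. Step (a) is injective by construction. Step (b), after normalization, is holomorphic and non-zero at $\underline{s}^0$: the ratio $\prod_{i<j} L(\tau_v\times\hat\tau_v, s_i-s_j+(m_i-m_j+2)/2)/L(\tau_v\times\hat\tau_v, s_i-s_j+(m_i+m_j)/2)$ is holomorphic and non-vanishing in the region \eqref{2.21}, since $\tau_v$ is unitary cuspidal and the arguments of the L-factors lie in the appropriate half-planes (the numerator at $\mathrm{Re}\geq 1$, the denominator at $\mathrm{Re}\geq m_j\geq 1$), while the normalized $N_v$ is itself holomorphic there by \cite{MW89}, I.10. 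Step (c) is given by a convergent Jacquet integral at the requisite domain, extends holomorphically, and is known to surject (resp. have dense image in the Archimedean case) onto the standard Whittaker model of the generic induced representation $\tau_v|\cdot|^{s_1^0+(m_1-1)/2}\times\cdots\times\tau_v|\cdot|^{s_k^0+(m_k-1)/2}$.

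To conclude surjectivity of the composition, I would use $\GL_{(m-k)n}(F_v)$-equivariance of $\Lambda_v$ and the Iwasawa decomposition to reduce to showing that the ``fiber at the identity'' $\Lambda_v(f_{\underline{s},v})(I_{(m-k)n})$, as $f_{\underline{s},v}$ varies, covers (or is dense in) $\Delta(\tau_v,m_1-1)\otimes\cdots\otimes\Delta(\tau_v,m_k-1)$. This fiber computation can be interpreted as the ``diagonal'' contribution to the $(kn)$-th Bernstein--Zelevinsky derivative of the parabolic induction, by the local Leibniz rule for derivatives as in Theorem 3.5 of \cite{Ze80} and Lemma 4.5 of \cite{BZ77}, and this diagonal contribution equals exactly $\rho_{\Delta(\tau_v,\underline{m}-\underline{1}),\underline{s}^0-\frac{1}{2}\cdot\underline{1}}$, with the other derivative contributions projecting elsewhere.

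The main obstacle, I expect, will be the compatibility of step (a) with steps (b) and (c): the image of $\alpha_v$ is a proper subrepresentation of the larger parabolic induction $\rho^{\text{big}}_v$, so one must verify that the composition $(\mathrm{id}\otimes\mathcal{W}_{\psi_{Z_{nk},v}})\circ M_v$ does not lose information when restricted to this Speh subspace. The expected reason is that the directions transverse to $\alpha_v(\rho_{\Delta(\tau_v,\underline{m}),\underline{s}^0})$ in $\rho^{\text{big}}_v$ correspond to subquotients whose images under $(\mathrm{id}\otimes\mathcal{W}_{\psi_{Z_{nk},v}})\circ M_v$ lie outside the target $\rho_{\Delta(\tau_v,\underline{m}-\underline{1}),\underline{s}^0-\frac{1}{2}\cdot\underline{1}}$, but making this precise requires a local Jacquet module calculation on the Speh representations along the lines of \cite{MW89}, together with tracking how the intertwining operators act on these Jacquet modules; this is the principal technical step.
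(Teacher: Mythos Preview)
Your proposal identifies the right obstacle but does not resolve it. The problem is exactly what you flag: $\alpha_v$ is injective into the larger induction $\rho^{\text{big}}_v$ but not surjective, and knowing that $M_v$ and $\mathcal{W}_{\psi_{Z_{nk},v}}$ are individually well-behaved on $\rho^{\text{big}}_v$ says nothing about their restriction to the proper Speh subspace. Your suggested fix --- that the transverse subquotients map ``outside the target'' --- is not correct as stated (the target is the whole of $\rho_{\Delta(\tau_v,\underline{m}-\underline{1}),\underline{s}^0-\frac{1}{2}\cdot\underline{1}}$, and everything lands there), and the Jacquet-module bookkeeping you allude to would be substantial, with no obvious analogue at Archimedean places where the Bernstein--Zelevinsky filtration is unavailable.

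The paper bypasses all of this with a direct duality argument that avoids analyzing the three operators separately. One takes $\xi_0$ in the contragredient $\rho_{\Delta(\hat\tau_v,\underline{m}-\underline{1}),-\underline{s}^0+\frac{1}{2}\cdot\underline{1}}$ annihilating the image of $\Lambda_{v,\underline{s}^0}$, and evaluates the pairing $\langle \Lambda_{v,\underline{s}^0}(f_0),\xi_0\rangle$ by the integral over the open cell in $\GL_{(m-k)n}(F_v)$. The key move is to choose $f_0=\rho((w_0\epsilon'_0w'_0)^{-1})f'_0$ with $f'_0$ supported in the big cell $P_{n\underline{m}}(F_v)\bar V_{n\underline{m}}(F_v)$, compactly modulo $P_{n\underline{m}}(F_v)$. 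For such sections the intertwining integral defining $\Lambda_v$ is \emph{absolutely convergent} (no normalization, no meromorphic continuation needed), and the pairing unwinds to an explicit integral over the lower-unipotent variables against $\xi_0(w''_0 u(y))$. Since $f'_0$ restricted to $\bar V_{n\underline{m}}(F_v)$ can be an arbitrary compactly supported function valued in $\otimes_i\Delta(\tau_v,m_i)$, and $\alpha'_v$ is nonzero on each factor, one concludes that $\xi_0$ vanishes on the open cell of $\GL_{(m-k)n}(F_v)$, hence $\xi_0=0$. This works uniformly at all places and never confronts the compatibility problem you raised.
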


\begin{proof}
The proof is similar to that of Prop. 2.6 in \cite{GS22}.
Denote by $\Lambda_{v,\underline{s}^0}$ be the following linear map on $V_{\rho_{\Delta(\tau_v,\underline{m}),\underline{s}^0}}$. Let $f_0$ be a function in $V_{\rho_{\Delta(\tau_v,\underline{m}),\underline{s}^0}}$, and let  $f_{\Delta(\tau_v,\underline{m}),\underline{s}}$ be any smooth, holomorphic section of $\rho_{\Delta(\tau_v,\underline{m}),\underline{s}}$, such that $f_{\Delta(\tau_v,\underline{m}),\underline{s}^0}=f_0$. Then
$$
\Lambda_{v,\underline{s}^0}(f_0)=\Lambda(f_{\Delta(\tau_v,\underline{m}),\underline{s}})_{\large |_{\underline{s}=\underline{s}^0}}.
$$
This is well defined, since $\Lambda(f_{\Delta(\tau_v,\underline{m}),\underline{s}})_{\large |_{\underline{s}=\underline{s}^0}}$ depends only on $f_{\Delta(\tau_v,\underline{m}),\underline{s}^0}$. Let $\xi_0$ be an element in the dual of $\rho_{\Delta(\tau_v,\underline{m}),\underline{s}^0}$, realized as $\rho_{\Delta(\hat{\tau}_v,\underline{m}),-\underline{s}^0}$. Assume that it is zero on the image of $\Lambda_{v,\underline{s}^0}$. Then we want to prove that $\xi_0=0$.
Our assumption is that for all $f_0$,
\begin{equation}\label{2.26}
<\Lambda_{v,s_0}(f_0),\xi_0>=0.
\end{equation}
Let
$$
w'_0=\diag(w_0'',I_{kn}), \  \ w_0''=\begin{pmatrix}&&&I_{(m_1-1)n}\\&&\cdot\\&\cdot\\I_{(m_k-1)n}\end{pmatrix}.
$$
Let us take in \eqref{2.26}, $f_0=\rho((w_0\epsilon'_0w_0')^{-1})f'_0$, where $f'_0$ is supported in $P_{n\underline{m}}(F_v)\bar{V}_{n\underline{m}}(F_v)$, and the support in $\bar{V}_{n\underline{m}}(F_v)$ is compact. Then we can write the l.h.s. of \eqref{2.26} as an absolutely convergent integral. This will be clear from what follows.
\begin{equation}\label{2.27}
<\Lambda_{v,s_0}(f_0),\xi_0>=\int\limits_{V_{(m_k-1)n,...,(m_1-1)n}(F_v)}<\Lambda_{v,\underline{s}^0}(f_0)(w_0''u),\xi_0(w''_0u)>du,
\end{equation}
where the inner pairing in \eqref{2.27} is the pairing between $\otimes_{i=1}^k\Delta(\tau_v,m_i-1)$ and $\otimes_{i=1}^k\Delta(\hat{\tau}_v,m_i-1)$. Write $u\in V_{(m_k-1)n,...,(m_1-1)n}(F_v)$ as
$$
u=u(y)=\begin{pmatrix}I_{(m_k-1)n}&y_{k,k-1}&\cdots&y_{k,1}\\&I_{(m_{k-1}-1)n}&\cdots&y_{k-1,1}\\&&\ddots\\&&&I_{(m_1-1)n}\end{pmatrix}.
$$
Then \eqref{2.27} is equal to
\begin{equation}\label{2.28}
\int <(\alpha'_v\circ f'_0)(\begin{pmatrix}I_{m_1n}\\c_{2,1}(x,y,z)&I_{m_2n}\\&&\ddots\\c_{k,1}(x,y,z)&c_{k,2}(x,y,z)&\cdots&I_{m_kn}\end{pmatrix}),\xi_0(w''_0u(y))>\chi_\psi^{-1}(z)dxdzdy.
\end{equation}
Here, for $1\leq j<i\leq k$,
$$
c_{i,j}(x,y,z)=\begin{pmatrix}y_{i,j}&x_{i,j}\\0&z_{i,j}\end{pmatrix}\in M_{m_in\times m_jn}(F_v),\ \ y_{i,j}\in M_{(m_i-1)n\times (m_j-1)n}(F_v), 
$$
$$
x_{i,j}\in M_{(m_i-1)n\times n}(F_v),\ \ z_{i,j}\in M_{n\times n}(F_v);
$$
$$ 
\chi_\psi(z)=\psi(\sum\limits_{i=1}^{k-1}((z_{i,i+1})_{n,1}). 
$$
Finally, $\alpha'_v$ is obtained from $\alpha_v$, by applying, for $1\leq i\leq k$, $\alpha_{i,v}$ to $\Delta(\tau_v,m_i)$ and evaluating at $I_n$ at the $W(\tau_v,\psi_v)$-factor. Since \eqref{2.28} is zero, for all $f'_0$ in $V_{\rho_{\Delta(\tau_v,\underline{m}),\underline{s}^0}}$, supported in $P_{n\underline{m}}(F_v)\bar{V}_{n\underline{m}}(F_v)$, compactly modulo $P_{n\underline{m}}(F_v)$, we conclude that $\xi_0$ is zero on the open cell $P_{n(\underline{m}-\underline{1}}(F_v)w''_0V_{(m_k-1)n,...,(m_1-1)n}(F_v)$, and hence $\xi_0=0$. This proves the proposition.

\end{proof}

\subsection{The case $k=2$}. From now on, we concentrate on the case $k=2$. So, we consider $\underline{m}=(m_1,m_2)$, $m=m_1+m_2$, $\underline{s}=(s_1,s_2)$, the parabolic induction $\rho_{\Delta(\tau,\underline{m}),\underline{s}}=\Delta(\tau,m_1)|\cdot|^{s_1}\times \Delta(\tau,m_2)|\cdot|^{s_2}$, and a corresponding Eisenstein series $E(f_{\Delta(\tau,\underline{m}),\underline{s}})$. We saw in Theorem \ref{thm 2.2} that $\mathcal{D}_{\psi,2n}(E(f_{\Delta(\tau,\underline{m}),\underline{s}}))\circ r$ is an Eisenstein series on $\GL_{(m-2)n}(\BA)$, attached to the section $\Lambda(f_{\Delta(\tau,\underline{m}),\underline{s}})$ of $|\det\cdot|^{\frac{2n-1}{2}}(\Delta(\tau,m_1-1)|\cdot|^{s_1}\times \Delta(\tau,m_2-1)|\cdot|^{s_2})$. We saw in Theorem \ref{thm 2.3} that $\Lambda(f_{\Delta(\tau,\underline{m}),\underline{s}})$ is holomorphic at $(s_1,s_2)$, such that
\begin{equation}\label{2.29}
Re(s_1-s_2)\geq \frac{m_2-m_1}{2}.
\end{equation}
We may assume that $m_1\geq m_2$. Let us explain why we can make this assumption. Indeed, let us show that we have an analogous picture, namely another descent operation, taking $E(f_{\Delta(\tau,\underline{m}),\underline{s}})$ to an Eisenstein series on $\GL_{(m-2)n}(\BA)$, attached to $|\det\cdot|^{\frac{1-2n}{2}}\Delta(\tau,m_1-1)|\cdot|^{s_1}\times \Delta(\tau,m_2-1)|\cdot|^{s_2}$, corresponding to a section, which is holomorphic
when $Re(s_1-s_2)\geq \frac{m_1-m_2}{2}$. For this, we will use the following analog of the Bernstein-Zelevinski derivative, related to \eqref{1.41}. Let $g\in \GL_{(m-2)n}(\BA)$. Then (see \eqref{1.41}),
\begin{equation}\label{2.30}
\mathcal{D}'_{\psi,2n}(E(f_{\Delta(\tau,\underline{m}),\underline{s}}))(g)=
\int\limits_{[V_{1^{2n},(m-2)n}]}E(f_{\Delta(\tau,\underline{m}),\underline{s}})(vg)\psi^{-1}_{V_{1^{2n},(m-2)n}}(v)dv.
\end{equation}.
Using Prop. \ref{prop 1.5}, we have the following analog of Prop. \ref{prop 2.1}. (It is, of course, valid for any $k$). 

\begin{prop}\label{prop 2.5}
	
We have, for $g\in \GL_{mn}(\BA)$,
$$
{}^0\mathcal{D}'_{\psi,2n}(E(f_{\Delta(\tau,\underline{m}),\underline{s}}))(g)=\mathcal{D}'_{\psi,2n}(E(f_{\Delta(\tau,\underline{m}),\underline{s}}))(g).
$$
We conclude from \eqref{1.43} that
\begin{equation}\label{2.31}
\mathcal{F}^1_{\psi_{(2n,1^{(m-2)n})}}(E(f_{\Delta(\tau,\underline{m}),\underline{s}}))(g)=\int_{Y'(\BA)}\mathcal{D}'_{\psi,2n}(E(f_{\Delta(\tau,\underline{m}),\underline{s}}))(ywg)dy.
\end{equation}
	
\end{prop}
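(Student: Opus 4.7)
I will adapt the proof of Proposition~\ref{prop 2.1}, replacing Proposition~\ref{prop 1.4} by Proposition~\ref{prop 1.5}. The identity \eqref{2.31} follows at once from \eqref{1.43} once the equality $^0\mathcal{D}'_{\psi,2n}(\varphi)(g) = \mathcal{D}'_{\psi,2n}(\varphi)(g)$ is proved (with $\varphi = E(f_{\Delta(\tau,\underline{m}),\underline{s}})$), so this is the only substantive step.

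For $u \in \BA^{(m-2)n}$, set $w(u) = I_{mn} + \sum_{i=1}^{(m-2)n} u_i e_{2n,2n+i}$. A direct matrix computation shows that each element of $V_{1^{2n},(m-2)n}$ decomposes uniquely as $v'\cdot w(u)$ with $v' \in V^0 := V_{1^{2n-1},(m-2)n+1}$, and that in this decomposition the character $\psi_{Z_{2n}}$ of $\mathcal{D}'_{\psi,2n}$ depends only on $v'$ and agrees there with the character defining $^0\mathcal{D}'_{\psi,2n}$. The key observation is that the summand $v_{2n-1,2n}$ of $\psi_{Z_{2n}}$ is a cross-block entry that nevertheless belongs to $V^0$ (it is the first column of the $X$-block of $v'$), so no $u$-dependence is picked up. This gives
\[
\mathcal{D}'_{\psi,2n}(\varphi)(g) = \int_{F^{(m-2)n}\backslash\BA^{(m-2)n}} {}^0\mathcal{D}'_{\psi,2n}(\varphi)(w(u)g)\,du,
\]
reducing the claim to the vanishing of every nontrivial Fourier coefficient of the smooth function $u\mapsto {}^0\mathcal{D}'_{\psi,2n}(\varphi)(w(u)g)$ on the compact group $F^{(m-2)n}\backslash\BA^{(m-2)n}$.

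For the orbit reduction, $\diag(I_{2n},h)$ with $h \in \GL_{(m-2)n}(F)$ lies in the left-stabilizer of $^0\mathcal{D}'_{\psi,2n}$ and conjugates $w(u)$ to $w(uh)$, so all nontrivial additive characters form a single $\GL_{(m-2)n}(F)$-orbit, and it suffices to treat the representative $\psi^{-1}(u_1)$, with $u_1$ sitting at position $(2n,2n+1)$. Fusing the $V^0$-integration with the $u$-integration produces an integral over $V_{1^{2n},(m-2)n}$ in which the $^0\mathcal{D}'_{\psi,2n}$-character is augmented by $\psi(u_1) = \psi(v_{2n,2n+1})$, completing it to
\[
\psi\bigl(v_{1,2}+v_{2,3}+\cdots+v_{2n-1,2n}+v_{2n,2n+1}\bigr) = \psi_{Z_{2n+1}}(z),
\]
where $z$ is the upper-left $(2n+1)\times(2n+1)$ block of $v$. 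This is precisely $^0\mathcal{D}'_{\psi,2n+1}(\varphi)(g)$. By Proposition~\ref{prop 1.5}, its vanishing is equivalent to that of $\mathcal{F}_{\psi_{(2n+1,1^{(m-2)n-1})}}(\varphi)$, which in turn follows from the upper-bound half of \eqref{2.2} (valid wherever the Eisenstein series is holomorphic): the first part $2n+1$ of the partition strictly exceeds the largest part $2n$ permitted in the bound on $\mathcal{O}(\varphi)$. This kills all nontrivial Fourier coefficients and establishes the proposition. The main obstacle is the matrix-algebra bookkeeping in the previous paragraph, ensuring the character splits cleanly in the decomposition $v=v'w(u)$ and that the would-be ``gap'' at $v_{2n-1,2n}$ has already been contributed by $V^0$, so that adding $\psi(u_1)$ assembles the full $\psi_{Z_{2n+1}}$ without any missing summand.
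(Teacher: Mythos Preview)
Your proof is correct and follows essentially the same approach as the paper, which simply declares Proposition~\ref{prop 2.5} to be the analog of Proposition~\ref{prop 2.1} obtained by replacing Proposition~\ref{prop 1.4} with Proposition~\ref{prop 1.5}; you have faithfully carried out that analogy, with the row $w(u)$ at position $2n$ playing the role of the column $u$ at position $(m-k)n+1$ in the unprimed case. One cosmetic slip: conjugation by $\diag(I_{2n},h)$ sends $w(u)$ to $w(uh^{-1})$ rather than $w(uh)$, but this is irrelevant to the single-orbit conclusion.
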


Let us write down the analog of Theorem \ref{thm 2.2} for $k=2$. (This can be done for any $k$.) Let
$$
\tilde{w}_0=\begin{pmatrix}I_n&0&0&0\\0&0&I_{(m_1-1)n}&0\\0&I_n&0&0\\0&0&0&I_{(m_2-1)n}\end{pmatrix},
$$
$$
\mathcal{B}'=\left\{\begin{pmatrix}0&0\\x&0\end{pmatrix}\in M_{2n\times(m-2)n}:x\in M_{n\times (m_1-1)n}\right\}.
$$
Denote, for $g\in \GL_{(m-2)n}(\BA)$,
$$
r'(g)=\diag(I_{2n},g).
$$
\begin{thm}\label{thm 2.6}
For $Re(s_1-s_2)\gg 0$, $g\in \GL_{(m-2)n}(\BA)$,
\begin{equation}\label{2.32}
\mathcal{D}'_{\psi,2n}(E(f_{\Delta(\tau,\underline{m}),\underline{s}}))(r'(g))=\sum\limits_{\gamma\in P_{n((m_1-1),(m_2-1))}(F)\backslash \GL_{(m-2)n}(\BA)}\Lambda'(f_{\Delta(\tau,\underline{m}),\underline{s}})(\gamma g),
	\end{equation}
	where
\begin{equation}\label{2.33}
\Lambda'(f_{\Delta(\tau,\underline{m}),\underline{s}}) (g)=\int\limits_{V_{n^2}(\BA)}\int\limits_{\mathcal{B}'(\BA)}(\otimes^2\mathcal{D}'_{\psi,n})(f_{\Delta(\tau,\underline{m}),\underline{s}}(\tilde{w}_0\begin{pmatrix}\epsilon_0v&b\\&g\end{pmatrix}))\psi^{-1}_{Z_{2n}}(v)dbdv.
\end{equation}
The integral \eqref{2.33} converges absolutely for $Re(s_1-s_2)\gg 0$, and continues to a meromorphic function in $\BC^2$. $\Lambda'(f_{\Delta(\tau,\underline{m}),\underline{s}}) $ is a smooth meromorphic section of $|\det\cdot|^{\frac{1-2n}{2}}\rho_{\Delta(\tau,\underline{m}-\underline{1}),\underline{s}}$. Thus, $\mathcal{D}'_{\psi,2n}(E(f_{\Delta(\tau,\underline{m}),\underline{s}}))\circ r'$ is an Eisenstein series attached to $|\det\cdot|^{\frac{1-2n}{2}}\rho_{\Delta(\tau,\underline{m}-\underline{1}),\underline{s}}$.
\end{thm}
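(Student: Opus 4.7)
The plan is to mirror the proof of Theorem \ref{thm 2.2} with the roles of "left" and "right" reversed, replacing the "bottom" Bernstein--Zelevinsky-type derivative $\mathcal{D}_{\psi,2n}$ by its "top" counterpart $\mathcal{D}'_{\psi,2n}$. First I would use Prop.~\ref{prop 2.5} to express $\mathcal{D}'_{\psi,2n}(E(f_{\Delta(\tau,\underline{m}),\underline{s}}))(r'(g))$ as an integral along $[V_{1^{2n},(m-2)n}]$ of a sum over double cosets $P_{n\underline{m}}(F)\backslash \GL_{mn}(F)/P_{2n,(m-2)n}(F)$. Cuspidality of $\tau$ restricts attention to Weyl representatives that permute the $n\times n$ blocks of $M_{n^m}$; these are parametrized, as in \eqref{2.9} but with the two block-columns of the representatives interchanged, by pairs $(\ell_1,\ell_2)$ of non-negative integers with $\ell_1+\ell_2=2$, recording how many $n\times n$ blocks of each $\Delta(\tau,m_i)$ are sent into the \emph{left} $\GL_{2n}$-factor of $P_{2n,(m-2)n}$.

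Second, for each such representative, conjugation of a suitable abelian subgroup by the Weyl element produces an inner integration equal to the constant term of $\Delta(\tau,m_i)$ along $V_{n\ell_i,n(m_i-\ell_i)}$, followed by a $\psi_{Z_{n\ell_i}}$-Whittaker functional on the $\GL_{n\ell_i}$-block. Since $\Delta(\tau,\ell_i)$ is non-generic for $\ell_i\geq 2$, only the pair $(\ell_1,\ell_2)=(1,1)$ survives, and this picks out the Weyl representative $\tilde{w}_0$. A further decomposition over $\epsilon \in P_{n,n}(F)\backslash \GL_{2n}(F)/V_{1^{2n}}(F)$ arises from the finer double cosets, and the same character-obstruction argument used in the proof of Theorem \ref{thm 2.2} (exhibiting a non-trivial $\psi_{V_{1^{2n},(m-2)n}}$-value on a $Z_{2n}$-subgroup lying in the conjugated unipotent) forces $\epsilon=\epsilon_0$. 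Collapsing the remaining summation over the stabilizer of the inducing data yields \eqref{2.32} with $\Lambda'(f_{\Delta(\tau,\underline{m}),\underline{s}})$ as in \eqref{2.33}.

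Third, I would factor $\Lambda'(f_{\underline{s}})$ as a composition analogous to \eqref{2.20}: (i) the constant-term embedding of each $\Delta(\tau,m_i)$ into $\tau|\cdot|^{-(m_i-1)/2}\times \Delta(\tau,m_i-1)|\cdot|^{1/2}$, so that $f_{\underline{s}}$ maps to a section of a parabolic induction with both $\tau$-factors on the left of their respective $\Delta(\tau,m_i-1)$-factors; (ii) the standard intertwining operator moving both $\tau$-factors to the front, whose normalized version is holomorphic in the claimed cone by the unitarity of $\tau$ and $\Delta(\tau,m_i-1)$, as in \cite{MW89}, I.10; (iii) the $\psi_{Z_{2n}}$-Jacquet--Whittaker functional applied to the leading $\tau\times\tau$ block, which converges via the Casselman--Shalika calculation and meromorphically continues. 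Absolute convergence of the $db$-integral for $\mathrm{Re}(s_1-s_2)\gg 0$, meromorphic continuation, and the identification of the overall modular shift as $|\det\cdot|^{(1-2n)/2}$ (opposite in sign to the $|\det\cdot|^{(2n-1)/2}$ of Theorem \ref{thm 2.2}, precisely because the $\tau$-factors are now sliced off the \emph{left} rather than the \emph{right}, so that the restriction to $r'(\GL_{(m-2)n}(\BA))$ picks up the dual modular character of $P_{2n,(m-2)n}$) then follow by the same arguments as at the end of the proof of Theorem \ref{thm 2.2}.

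The main obstacle I anticipate is purely the careful bookkeeping: using the correct parabolic $V_{n,(m_i-1)n}$ (not $V_{(m_i-1)n,n}$), the correct Weyl element $\tilde{w}_0$, and tracking the signs of all the $|\det\cdot|$-shifts through the three operators so as to land exactly in $|\det\cdot|^{(1-2n)/2}\rho_{\Delta(\tau,\underline{m}-\underline{1}),\underline{s}}$. Once these are set up, the structural argument is identical to that of Theorem \ref{thm 2.2}, and no new analytic input is needed.
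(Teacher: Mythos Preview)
Your proposal is correct and follows essentially the same approach as the paper, which itself says only ``The proof is similar to that of Theorem \ref{thm 2.2} and we omit it'' and then records the decomposition of $\Lambda'$ as constant term, intertwining operator $M'$, and Whittaker functional $\mathcal{W}_{\psi_{Z_{2n}}}$ exactly as you outline. Your identification of the correct constant term as being along $V_{n,(m_i-1)n}$ is right (and in fact the paper's text contains a typo here, writing $V_{(m_i-1)n,n}$ while the resulting induction \eqref{2.34} clearly comes from $V_{n,(m_i-1)n}$); otherwise the strategies coincide.
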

\begin{proof}
The proof is similar to that of Theorem \ref{thm 2.2} and we omit it. We just note, as in \eqref{2.16}-\eqref{2.19}, that $\Lambda'$ is the composition of the following operators on $f_{\underline{s}}$: first, we take on each inducing factor $\Delta(\tau,m_i)$, $i=1,2$, the constant term along $V_{(m_i-1)n,n}$. This takes $f_{\underline{s}}$ to $\hat{f}_{\underline{s}}$, which lies in the parabolic induction
\begin{equation}\label{2.34}
\tau|\cdot|^{s_1-\frac{m_1-1}{2}}\times \Delta(\tau,m_1-1)|\cdot|^{s_1+\frac{1}{2}}\times \tau|\cdot|^{s_2-\frac{m_2-1}{2}}\times \Delta(\tau,m_2-1)|\cdot|^{s_2+\frac{1}{2}}
\end{equation}
Then we apply the intertwining operator
\begin{equation}\label{2.35}
M'(\hat{f}_{\underline{s}})(h)=\int\limits_{\mathcal{B}'(\BA)}\hat{f}_{\underline{s}}(w_0\begin{pmatrix}(I_{2n}&b\\&I_{(m-2)n}\end{pmatrix})h)db.
\end{equation}
This operator has a meromorphic continuation to $\BC^2$. It takes the paraolic induction \eqref{2.34} to
\begin{equation}\label{2.36}
\tau|\cdot|^{s_1-\frac{m_1-1}{2}}\times \tau|\cdot|^{s_2-\frac{m_2-1}{2}}\times \Delta(\tau,m_1-1)|\cdot|^{s_1+\frac{1}{2}}\times \Delta(\tau,m_2-1)|\cdot|^{s_2+\frac{1}{2}}
\end{equation}
$$
\cong \tau|\cdot|^{s_1-\frac{m_1-1}{2}}\times \tau|\cdot|^{s_2-\frac{m_2-1}{2}}\times\rho_{\Delta(\tau,\underline{m}-\underline{1}),\underline{s}+\frac{1}{2}\cdot \underline{1}}.
$$
The third operator, applied to \eqref{2.36} is
\begin{equation}\label{2.37}
\mathcal{W}_{\psi_{Z_{2n}}}\otimes id_{\rho_{\Delta(\tau,\underline{m}-\underline{1}),\underline{s}+\frac{1}{2}\cdot \underline{1}}}.
\end{equation}
Thus, 
\begin{equation}\label{2.38}
\Lambda'(f_{\underline{s}})=(\mathcal{W}_{\psi_{Z_{2n}}}\otimes id_{\rho_{\Delta(\tau,\underline{m}-\underline{1}),\underline{s}+\frac{1}{2}\cdot \underline{1}}})\circ M'(\hat{f}_{\underline{s}})\Big|_{r'(\GL_{(m-2)n}(\BA))}
\end{equation}
is a smooth meromorphic section of $|\det\cdot|^{\frac{1-2n}{2}}\rho_{\Delta(\tau,\underline{m}-\underline{1}),\underline{s}}$.

\end{proof}

As in Theorem \ref{thm 2.3}, we have

\begin{thm}\label{thm 2.7}
	
The section $\Lambda'(f_{\Delta(\tau,\underline{m}),\underline{s}})$ is holomorphic at $\underline{s}$, such that 
\begin{equation}\label{2.39}	
Re(s_1-s_2)\geq\frac{m_1-m_2}{2}.
\end{equation}
	
\end{thm}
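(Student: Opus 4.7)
The plan is to mirror the proof of Theorem \ref{thm 2.3}, exploiting the explicit factorization \eqref{2.38}: $\Lambda'(f_{\underline{s}}) = (\mathcal{W}_{\psi_{Z_{2n}}}\otimes \mathrm{id})\circ M'(\hat{f}_{\underline{s}})|_{r'(\GL_{(m-2)n}(\BA))}$. Taking $f_{\underline{s}}$ decomposable, only the intertwining operator $M'$ and the Whittaker functional on the two $\tau$-factors in \eqref{2.36} carry analytic content; the constant-term step producing $\hat{f}_{\underline{s}}$ is entire. Fix a finite set $S$ of places containing the Archimedean ones, outside which $\tau$ and $\psi$ are unramified.

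First I would identify $M'$ with the single permutation flipping $\Delta(\tau,m_1-1)|\cdot|^{s_1+\frac{1}{2}}$ past $\tau|\cdot|^{s_2-\frac{m_2-1}{2}}$ in \eqref{2.34}. A Gindikin--Karpelevich computation at an unramified place, together with the standard factorization of $L(\Delta(\tau_v,r)\times\hat{\tau}_v,z)$ into a telescoping product of factors $L(\tau_v\times\hat{\tau}_v,\cdot)$, should yield the normalizing scalar $\frac{L(\tau\times\hat{\tau},\ s_1-s_2+(m_2-m_1+2)/2)}{L(\tau\times\hat{\tau},\ s_1-s_2+(m_1+m_2)/2)}$. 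Writing $M' = (\text{this scalar})\cdot\bigotimes_v N'_v$, the normalized local operators $N'_v$ should be holomorphic throughout a half-plane containing \eqref{2.39}, by the Moeglin--Waldspurger criterion \cite{MW89}, I.10, applied to the unitary representations $\tau_v$ and $\Delta(\tau_v,m_1-1)$.

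Next I would apply the Casselman--Shalika formula to $\mathcal{W}_{\psi_{Z_{2n}}}$, exactly analogous to \eqref{2.24}, producing the factor $\frac{L_S(\tau\times\hat{\tau},\ s_1-s_2+(m_2-m_1+2)/2)}{L(\tau\times\hat{\tau},\ s_1-s_2+(m_2-m_1+2)/2)}$ times local Jacquet integrals at $v\in S$ (all holomorphic). After combining with $M'$ and cancelling the shared global $L$, the meromorphic content of $\Lambda'(f_{\underline{s}})$ will reduce to
\begin{equation*}
\frac{L_S(\tau\times\hat{\tau},\ s_1-s_2+\tfrac{m_2-m_1+2}{2})}{L(\tau\times\hat{\tau},\ s_1-s_2+\tfrac{m_1+m_2}{2})}
\end{equation*}
multiplied by holomorphic local terms. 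The denominator is entire and nonvanishing for $\Re > 1$, hence harmless in our region. The numerator $L_S$ is a finite product of local Rankin--Selberg $L$-factors for $\tau_v$ unitary and generic, so it is holomorphic precisely when the real part of its argument is $\geq 1$, i.e.\ exactly when $\Re(s_1-s_2)\geq\frac{m_1-m_2}{2}$, yielding \eqref{2.39}. The main obstacle I anticipate is establishing holomorphy of the local $N'_v$ at ramified and archimedean places throughout the full claimed half-plane; this is precisely where the unitarity of both $\tau_v$ and $\Delta(\tau_v,m_1-1)$ enters essentially via \cite{MW89}. Everything else is parallel to Theorem \ref{thm 2.3}, with the sign swap $m_j-m_i\leftrightarrow m_i-m_j$ traceable to the fact that $M'$ now pushes a $\Delta(\tau,m_1-1)$ factor past a $\tau$ factor on its right, rather than the reverse as in $M$.
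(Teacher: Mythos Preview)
Your proposal is correct and follows essentially the same route as the paper's proof, which simply says ``similar to Theorem \ref{thm 2.3}'' and records that the governing ratio is $\frac{L_S(\tau\times\hat{\tau},s_1-s_2+\frac{m_2-m_1+2}{2})}{L(\tau\times\hat{\tau},s_1-s_2+\frac{m_1+m_2}{2})}$, holomorphic exactly in the region \eqref{2.39}. You have spelled out the intermediate steps (identifying $M'$ as the single flip of $\Delta(\tau,m_1-1)|\cdot|^{s_1+1/2}$ past $\tau|\cdot|^{s_2-(m_2-1)/2}$, the telescoping Gindikin--Karpelevich computation, and the Casselman--Shalika contribution) that the paper leaves implicit. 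Two minor wording points: the global $L(\tau\times\hat{\tau},z)$ in the denominator is not ``entire'' (it has a simple pole at $z=1$), but what you actually need---and what holds---is its nonvanishing for $\Re(z)\geq 1$, so that $1/L$ is holomorphic there; and ``precisely'' is too strong for the $L_S$ claim (you only need one direction).
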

\begin{proof}
	
The proof is similar to that of Theorem \ref{thm 2.3}. We get that  $\Lambda'(f_{\Delta(\tau,\underline{m}),\underline{s}})(I_{(m-2)n})$ is holomorphic when $\frac{L_S(\tau\times\hat{\tau},s_1-s_2+\frac{m_2-m_1+2}{2})}{L(\tau\times\hat{\tau},s_1-s_2+\frac{m_1+m_2}{2})}$ is holomorphic, which is true when $Re(s_1-s_2)\geq \frac{m_1-m_2}{2}$.
\end{proof}

Finally, we have the analog of Prop. \ref{prop 2.4}, where the local sections $\Lambda' _v(f_{\underline{s},v})$ are defined analogously to \eqref{2.25}.  

\begin{prop}\label{prop 2.8}
Fix a place $v$ and fix $\underline{s}^0\in \BC^2$ satisfying \eqref{2.39}. Consider the local sections $\Lambda' _v(f_{\underline{s},v})$ at the point $\underline{s}^0$. View these as maps to $\rho_{\Delta(\tau_v,\underline{m}-\underline{1}),\underline{s}^0+\frac{1}{2}\cdot \underline{1}}$. Then, when $v$ is finite, these are surjective maps. When $v$ is Archimedean, their image is dense (in the Frechet topology).
\end{prop}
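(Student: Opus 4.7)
I would mirror the proof of Proposition~\ref{prop 2.4}. First, define $\Lambda'_{v,\underline{s}^0}$ on $V_{\rho_{\Delta(\tau_v,\underline{m}),\underline{s}^0}}$ by $\Lambda'_{v,\underline{s}^0}(f_0)=\Lambda'(f_{\Delta(\tau_v,\underline{m}),\underline{s}})|_{\underline{s}=\underline{s}^0}$, for any smooth holomorphic section with $f_{\underline{s}^0}=f_0$; this is well defined since the right-hand side depends only on $f_{\underline{s}^0}$. By Hahn--Banach, to establish surjectivity (respectively density of image), it suffices to show that every continuous functional $\xi_0$ in the dual of $\rho_{\Delta(\tau_v,\underline{m}-\underline{1}),\underline{s}^0+\frac{1}{2}\underline{1}}$, realized as $\rho_{\Delta(\hat\tau_v,\underline{m}-\underline{1}),-\underline{s}^0-\frac{1}{2}\underline{1}}$, which vanishes on the image of $\Lambda'_{v,\underline{s}^0}$, is identically zero.

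Next, I would unfold the pairing $\langle\Lambda'_{v,\underline{s}^0}(f_0),\xi_0\rangle$ by a Weyl translation that positions $f_0$ on the open Bruhat cell. Let $w''_0=\begin{pmatrix}&I_{(m_2-1)n}\\I_{(m_1-1)n}&\end{pmatrix}$, and set $\tilde w'_0=\diag(I_{2n},w''_0)$ and $\tilde\epsilon_0=\diag(\epsilon_0,I_{(m-2)n})$ (the latter flipping the two $\tau$-blocks of $M_{n^2}\subset\GL_{2n}$). Take $f_0=\rho((\tilde w_0\tilde\epsilon_0\tilde w'_0)^{-1})f'_0$ with $f'_0$ supported in the open cell $P_{n\underline{m}}(F_v)\overline{V}_{n\underline{m}}(F_v)$ and compactly supported modulo $P_{n\underline{m}}(F_v)$. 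Using the factorization \eqref{2.38} of $\Lambda'$ as (constant term on each $\Delta(\tau_v,m_i)$)$\,\circ\, M'\,\circ\,$(Whittaker on the two $\tau_v$-factors), and noting that \eqref{2.39} guarantees absolute convergence of $M'$ and of the Jacquet integral at $\underline{s}^0$, I would rewrite the pairing as a single iterated integral of the form
$$\int\bigl\langle(\beta_v\circ f'_0)(c(x,y,z)),\,\xi_0(w''_0u(y))\bigr\rangle\chi_\psi^{-1}(z)\,dx\,dy\,dz,$$
exactly parallel to \eqref{2.27}--\eqref{2.28}. Here $c(x,y,z)$ ranges over a block-lower-triangular subgroup of $\overline{V}_{n\underline{m}}(F_v)$ encoding the variables from $\mathcal{B}'$ and the two unfolded Jacquet integrals, $u(y)$ ranges over $V_{(m_1-1)n,(m_2-1)n}(F_v)$, and $\beta_v$ denotes the composition of the embeddings $\Delta(\tau_v,m_i)\hookrightarrow\mathcal{W}(\tau_v,\psi_v)|\cdot|^{-(m_i-1)/2}\times\Delta(\tau_v,m_i-1)|\cdot|^{1/2}$ followed by evaluation at $I_n$ on the Whittaker factors (the analog, with reversed factor order, of the map $\alpha'_v$ in the proof of Proposition~\ref{prop 2.4}).

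Since $f'_0$ has arbitrary compact support on $\overline{V}_{n\underline{m}}(F_v)$, vanishing of this integral for all such $f'_0$ forces $\xi_0$ to vanish on the open cell $P_{n(\underline{m}-\underline{1})}(F_v)w''_0V_{(m_1-1)n,(m_2-1)n}(F_v)$ of $\GL_{(m-2)n}(F_v)$, and hence $\xi_0\equiv 0$, proving surjectivity in the non-Archimedean case and density of image in the Archimedean case. The main obstacle will be the combinatorial bookkeeping needed to verify that the translation by $(\tilde w_0\tilde\epsilon_0\tilde w'_0)^{-1}$, combined with a change of variables in the $\mathcal{B}'$ and Whittaker integrations, exactly decouples all variables into the displayed form; since $\Lambda'$ differs from $\Lambda$ only by the reversal of the order of $\tau$- and $\Delta(\tau,m_i-1)$-factors (compare \eqref{2.18} with \eqref{2.36}), this is a structural mirror of the calculation in the proof of Proposition~\ref{prop 2.4} and presents no essentially new difficulty.
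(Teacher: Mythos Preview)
Your proposal is correct and follows exactly the approach the paper intends: the paper gives no separate proof of Proposition~\ref{prop 2.8}, treating it as a direct analog of Proposition~\ref{prop 2.4}, and you have correctly mirrored that argument with the appropriate reversals (using $\tilde w_0$ in place of $w_0$, the embeddings $\Delta(\tau_v,m_i)\hookrightarrow\tau_v\times\Delta(\tau_v,m_i-1)$ in place of $\Delta(\tau_v,m_i-1)\times\tau_v$, and $r'$ in place of $r$). The minor bookkeeping you flag---matching the precise form of $w''_0$ and the unipotent integration domain to the open cell of $P_{n(\underline{m}-\underline{1})}\backslash\GL_{(m-2)n}$---is indeed routine and parallel to \eqref{2.27}--\eqref{2.28}.
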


Since $\Delta(\tau,m_1)|\cdot|^{s_1}\times \Delta(\tau,m_2)|\cdot|^{s_2}$ has a central character, it is enough to consider the corresponding Eisenstein series in one complex variable only. We will take $s_1=s$, $s_2=-s$. We will change notation and denote the parabolic induction above by $\rho_{\Delta(\tau,\underline{m}),s}$, and similarly for sections. Our next goal is to determine the poles of the corresponding Eisenstein series, $E(f_{\Delta(\tau,\underline{m}),s})$ when $Re(s)\geq 0$, show that they are simple, and for each such pole and the corresponding residue representation $\pi$, find the set $\mathcal{O}(\pi)$.

\section{The pole at $s=\frac{m_1+m_2}{4}$}

In this section, we prove

\begin{thm}\label{thm 3.1}
The Eisenstein series $E(f_{\Delta(\tau,\underline{m}),s})$ has a simple pole at
$s=\frac{m_1+m_2}{4}$ (as the section varies). It is its right-most pole.
\end{thm}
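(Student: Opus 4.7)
The plan is to analyze the constant term of $E(f_{\Delta(\tau,\underline{m}),s})$ along the unipotent radical $V_{nm_1,nm_2}$ of the maximal parabolic $P_{nm_1,nm_2}$. Standard Eisenstein series theory for maximal parabolic induction gives
\begin{equation*}
E(f_{\Delta(\tau,\underline{m}),s})_{V_{nm_1,nm_2}}(g) \;=\; f_{\Delta(\tau,\underline{m}),s}(g) \;+\; M(s)f_{\Delta(\tau,\underline{m}),s}(g),
\end{equation*}
where $M(s)$ is the standard global intertwining operator
\begin{equation*}
M(s)\colon \Delta(\tau,m_1)|\cdot|^{s}\times\Delta(\tau,m_2)|\cdot|^{-s} \;\longrightarrow\; \Delta(\tau,m_2)|\cdot|^{-s}\times\Delta(\tau,m_1)|\cdot|^{s}.
\end{equation*}
First I would decompose $M(s)=r(s)\cdot\bigotimes_v N_v(s)$, where, up to entire nonvanishing epsilon factors, the global normalizing scalar is
\begin{equation*}
r(s) \;=\; \frac{L\bigl(\Delta(\tau,m_1)\times\widehat{\Delta(\tau,m_2)},\,2s\bigr)}{L\bigl(\Delta(\tau,m_1)\times\widehat{\Delta(\tau,m_2)},\,2s+1\bigr)}.
\end{equation*}

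Using the standard factorization of the Rankin--Selberg $L$-function of two Speh representations (a product of $\min(m_1,m_2)$ suitably shifted copies of $L(\tau\times\hat\tau,\cdot)$), together with the Jacquet--Shalika facts that $L(\tau\times\hat\tau,w)$ has a unique simple pole at $w=1$ and is holomorphic and nonvanishing for $\Re(w)>1$, a direct bookkeeping places the right-most pole of $r(s)$ at $s_0:=\frac{m_1+m_2}{4}$, simple, and shows that the denominator is holomorphic and nonvanishing in a neighborhood of $\Re(s)\geq s_0$. By Moeglin--Waldspurger, the normalized local operators $N_v(s)$ are holomorphic for $\Re(s)\geq 0$ and not identically zero. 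Therefore, in the half-plane $\Re(s)\geq s_0$, the operator $M(s)$ is holomorphic away from a simple pole at $s_0$, and for a decomposable section $f_{\Delta(\tau,\underline{m}),s}=\otimes_v f_{v,s}$ chosen so that each local factor does not lie in the kernel of $N_v(s_0)$ (using the spherical vector at almost all places), $M(s)f_{\Delta(\tau,\underline{m}),s}$ acquires a nonzero simple residue at $s_0$.

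To transfer this to a pole of $E(f_{\Delta(\tau,\underline{m}),s})$ itself, I would exploit the fact that at $s_0=\frac{m_1+m_2}{4}>0$ the two parabolic inductions housing $f_{\Delta(\tau,\underline{m}),s_0}$ and $M(s_0)f_{\Delta(\tau,\underline{m}),s_0}$ are inequivalent (their Langlands data differ by the swap of the Speh twists), so no cancellation between these summands of the constant term can absorb the singularity. The constant term thus has a simple pole at $s_0$ and no poles in $\Re(s)>s_0$; since the right-most pole of a cuspidal-data maximal-parabolic Eisenstein series is detected by the intertwining term in its constant term, this yields the claim that $E(f_{\Delta(\tau,\underline{m}),s})$ has a simple pole at $s_0=\frac{m_1+m_2}{4}$ and no poles strictly to its right.

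The main obstacle is to verify that the residue of $M(s)$ at $s_0$ is genuinely realized, i.e.\ that the normalized local operator $N_v(s_0)$ is nonzero on an accessible subspace at every place. At unramified places this is immediate since $N_v(s)$ preserves the spherical vector. At ramified and archimedean places, one can invoke local irreducibility of the inducing Speh representations in the relevant strip, together with a surjectivity/density argument of the type used in Propositions~\ref{prop 2.4} and~\ref{prop 2.8}, or alternatively the general Langlands--Shahidi nonvanishing of normalized operators for unitary inducing data. This is precisely the point that the combinatorial argument in \cite{Zh23}, criticized in the introduction, treats insufficiently, and it is what makes the analytic approach via normalization indispensable.
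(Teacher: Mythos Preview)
Your constant-term identity
\[
E(f_{\Delta(\tau,\underline{m}),s})_{V_{nm_1,nm_2}} \;=\; f_{\Delta(\tau,\underline{m}),s} \;+\; M(s)f_{\Delta(\tau,\underline{m}),s}
\]
is the formula for \emph{cuspidal} inducing data on a maximal Levi, and it fails here. The Speh representations $\Delta(\tau,m_i)$ are not cuspidal once $m_i\geq 2$: their constant terms along $V_{n^{m_i}}$ are nonzero. Consequently the Bruhat unfolding of $E^{V_{nm_1,nm_2}}$ produces, in addition to the two ``extreme'' terms you wrote, a whole family of intermediate terms, each built from an intertwining operator composed with a nontrivial constant term of the Speh block. (Even the $M(s)$ term you wrote only lands in the constant term along $V_{nm_1,nm_2}$ when $m_1=m_2$; otherwise it contributes to the constant term along the associate radical $V_{nm_2,nm_1}$.) These extra terms carry their own ratios of $L(\tau\times\hat\tau,\cdot)$ and must be controlled before one can conclude anything about the rightmost pole or its order. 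Your computation of the rightmost pole of $r(s)$ is correct as far as it goes, but without handling the missing terms the argument does not prove that $s=\frac{m}{4}$ is the rightmost pole of the Eisenstein series, nor that it is simple.

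The paper deals with exactly this issue by taking the constant term not along $V_{nm_1,nm_2}$ but along $V_{n^m}$, the radical attached to the cuspidal support. There one gets a genuine sum over Weyl representatives $w_{n\cdot(\underline a,\underline b)}$ (formula~(3.9)), each term being a single intertwining operator applied to a section of $\tau|\cdot|^{\ast}\times\cdots\times\tau|\cdot|^{\ast}$, with explicit normalizing ratio $c_{\underline a,\underline b}(s)$ (formula~(3.10)). One then checks term by term that every $c_{\underline a,\underline b}(s)$ is holomorphic for $\Re(s)>\frac{m}{4}$ and has at most a simple pole at $\frac{m}{4}$, and exhibits a section supported on the big cell at one finite place so that only a single $(\underline a,\underline b)$ survives and the pole is visibly realized. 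Your argument could be repaired along these lines, but as written the two-term constant term is the gap.
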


We break the proof into three steps.

\subsection{The constant term of $E(f_{\Delta(\tau,\underline{m}),s})$ along $V_{n^m}$}
Our Eisenstein series is concentrated on the parabolic subgroup $P_{n^m}$. We will show that the constan term of $E(f_{\Delta(\tau,\underline{m}),s})$ along $V_{n^m}$, $E^{V_{n^m}}(f_{\Delta(\tau,\underline{m}),s})$, has a simple pole at
$s=\frac{m_1+m_2}{4}$, and that it is its right-most pole. Denote, for short $f_s=f_{\Delta(\tau,\underline{m}),s}$. We have, for $Re(s)\gg 0$,\\
\\
$E^{V_{n^m}}(f_s)(g)$=
\begin{equation}\label{3.1}
=\sum\limits_{w\in P_{n\cdot\underline{m}}(F)\backslash \GL_{mn}(F)/P_{n^m}(F)}\int\limits_{[V_{n^m}]}\sum\limits_{\gamma\in P_{n^m}(F)\cap w^{-1}P_{n\cdot\underline{m}}(F)w\backslash P_{n^m}(F)}f_s(w\gamma vg)dv.
\end{equation}

As in the beginning of the proof of Theorem \ref{thm 2.2}, it is enough to take in \eqref{3.1} only permutation matrices $w$ which permute the blocks of $M_{n^m}$. These elements are parametrized by pairs of tuples $(\underline{a},\underline{b})$ of non-negative integers $\underline{a}=(a_1,...,a_r)$, $\underline{b}=(b_1,...,b_r)$, $r\geq 1$, such that $a_1+\cdots+a_r=m_1$, $b_1+\cdots+b_r=m_2$, and, for each $1\leq i\leq r$, $a_i+b_i\geq 1$. The corresponding Weyl element is
\begin{equation}\label{3.2}
w_{n\cdot(\underline{a},\underline{b})}=\begin{pmatrix}w_1^1&0&\cdots&0\\0&w_2^1&\cdots&0\\\\&&\ddots\\0&0&\cdots&w_r^1\\w_1^2&0&\cdots&0\\0&w_2^2&\cdots&0\\&&\ddots\\0&0&\cdots&w_r^2\end{pmatrix},
\end{equation}
$$
w_i^1=(I_{na_i},0)\in M_{na_i\times n(a_i+b_i)}(F),\ \ \ w_i^2=(0,I_{nb_i})\in M_{nb_i\times n(a_i+b_i)}(F),\ 1\leq i\leq r.
$$
We require that $r$ is minimal. For example, this implies that $b_1\neq 0$, since, otherwise, we may join $w^1_1$ and $w^1_2$ to $(I_{n(a_1+a_2)},0)$, and $r$ can be replaced by $r-1$. Similarly, if $a_1=0$, then $a_2\geq 1$. The elements of $P_{n^m}\cap w_{n\cdot(\underline{a},\underline{b})}^{-1}P_{n\cdot\underline{m}}w_{n\cdot(\underline{a},\underline{b})}$ have the form
\begin{equation}\label{3.3}
h=\begin{pmatrix}X_{1,1}&X_{1,2}&\cdots&X_{1,r}\\0&X_{2,2}&\cdots&X_{2,r}\\0&0&\ddots\\0&0&\cdots&X_{r,r}\end{pmatrix},
\end{equation}
where, for $1\leq i\leq j\leq r$,
$$
X_{i,j}=\begin{pmatrix}X^1_{i,j}&X^2_{i,j}\\0&X^4_{i,j}\end{pmatrix}, X^1_{i,j}\in M_{na_i\times na_j},\ X^2_{i,j}\in M_{na_i\times nb_j},\ X^4_{i,j}\in M_{nb_i\times nb_j},
$$
and, for $1\leq i\leq r$,
$$
X^1_{i,i}\in P_{n^{a_i}},\ \ X^4_{i,i}\in P_{n^{b_i}}.
$$
For $h$ as in \eqref{3.3}, we have
\begin{equation}\label{3.4}
w_{n\cdot(\underline{a},\underline{b})}hw_{n\cdot(\underline{a},\underline{b})}^{-1}=\begin{pmatrix}X^1&X^2\\0&X^4\end{pmatrix},
\end{equation}
where, for $e=1,2,4$,
$$
X^e=\begin{pmatrix}X_{1,1}^e&X_{1,2}^e&\cdots&X_{1,r}^e\\0&X_{2,2}^e&\cdots&X_{2,r}^e\\&&\ddots\\0&0&\cdots&X_{r,r}^e\end{pmatrix}.
$$
From \eqref{3.3}, we see that 
$$
P_{n^m}\cap w_{n\cdot(\underline{a},\underline{b})}^{-1}P_{n\cdot\underline{m}}w_{n\cdot(\underline{a},\underline{b})}\backslash P_{n^m}=V_{n^m}\cap w_{n\cdot(\underline{a},\underline{b})}^{-1}P_{n\cdot\underline{m}}w_{n\cdot(\underline{a},\underline{b})}\backslash V_{n^m}(F). 
$$
We take the following subgroup $V_{n^m}^{(\underline{a},\underline{b})}$ as a set of representatives for the last quotient. Its elements have the form
\begin{equation}\label{3.5}
\begin{pmatrix}I_{n(a_1+b_1)}&Y_{1,2}&\cdots&Y_{1,r}\\&I_{n(a_2+b_2)}&\cdots&Y_{2,r}\\\\&&\ddots\\&&&I_{n(a_r+b_r)}\end{pmatrix},
\end{equation}
where, for $1\leq i<j\leq r$,
$$
Y_{i,j}=\begin{pmatrix}0&0\\y_{i,j}&0\end{pmatrix},\ \ y_{i,j}\in M_{nb_i\times na_j}.
$$
Using \eqref{3.4}, \eqref{3.5}, we see that \eqref{3.1} becomes
\begin{equation}\label{3.6}
\sum\limits_{(\underline{a},\underline{b})}\int\limits_{V_{n^m}^{(\underline{a},\underline{b})}(\BA)}f_s^{V_{n^{m_1}}\times V_{n^{m_2}}}(w_{n\cdot(\underline{a},\underline{b})}vg)dv,
\end{equation}
where
\begin{equation}\label{3.7}
f_s^{V_{n^{m_1}}\times V_{n^{m_2}}}(x)=\int\limits_{[V_{n^{m_1}}]\times [V_{n^{m_2}}]}f_s(\diag(v_1,v_2)x)d(v_1,v_2).
\end{equation}
Let us realize the elements of $\Delta(\tau,m_1)^{V_{n^{m_1}}}$ as the residues at $\underline{s}^0=(\frac{m_1-1}{2},...,\frac{1-m_1}{2})$ of the intertwining operator
$$
M_{\underline{s}}:\tau|\cdot|^{s_1}\times\cdots\times\tau|\cdot|^{s_{m_1}}\rightarrow \tau|\cdot|^{s_{m_1}}\times\cdots\times\tau|\cdot|^{s_1},
$$
corresponding to the Weyl element $w_1=\begin{pmatrix}&&&I_n\\&&\cdot\\&\cdot\\I_n\end{pmatrix}\in \GL_{m_1n}(F)$.
Similarly with $\Delta(\tau,m_2)^{V_{n^{m_2}}}$, replacing $\underline{s}$ with $\underline{\zeta}$, $m_1$ by $m_2$; $\underline{\zeta}^0=(\frac{m_2-1}{2},...,\frac{1-m_2}{2})$, and $w_2$ denoting the Weyl element corresponding to $M_{\underline{\zeta}}$. Write \eqref{3.7} as follows. Let $f_{\underline{s}+s\cdot\underline{1},\underline{\zeta}-s\cdot\underline{1}}$ be a smooth, holomorphic section of
$$
\tau|\cdot|^{s_1+s}\times\cdots\times\tau|\cdot|^{s_{m_1}+s}\times \tau|\cdot|^{\zeta_1-s}\times\cdots\times\tau|\cdot|^{\zeta_{m_2}-s}.
$$
Then we realize \eqref{3.7}, for an appropriate $f_{\underline{s}+s\cdot\underline{1},\underline{\zeta}-s\cdot\underline{1}}$,  as $Res_{\underline{s}=\underline{s}^0,\underline{\zeta}=\underline{\zeta}^0}(M_{\underline{s}}\otimes M_{\underline{\zeta}})\circ f_{\underline{s}+s\cdot\underline{1},\underline{\zeta}-s\cdot\underline{1}}$. Now, we view the term in \eqref{3.6}, corresponding to $(\underline{a},\underline{b})$, as
\begin{equation}\label{3.8}
Res_{\underline{s}=\underline{s}^0,\underline{\zeta}=\underline{\zeta}^0}\int\limits_{V_{n^m}^{(\underline{a},\underline{b})}(\BA)}(M_{\underline{s}}\otimes M_{\underline{\zeta}})( f_{\underline{s}+s\cdot\underline{1},\underline{\zeta}-s\cdot\underline{1}}(w_{n(\underline{a},\underline{b})}vg)dv.
\end{equation}
The integral in \eqref{3.8} is the intertwining operator
$$
M_{w^0w_{n\cdot(\underline{a},\underline{b})},\underline{s},\underline{\zeta},s}:\tau|\cdot|^{s_1}\times\cdots\times\tau|\cdot|^{s_m}\rightarrow \tau|\cdot|^{s_{\eta(1)}}\times \tau|\cdot|^{s_{\eta(m)}},
$$
$$
M_{w^0w_{n\cdot(\underline{a},\underline{b})},\underline{s},\underline{\zeta},s}(f_{\underline{s}+s\cdot\underline{1},\underline{\zeta}-s\cdot\underline{1}})(g)=\int\limits_{\tilde{V}_{n^m}^{(\underline{a},\underline{b})}(\BA)}f_{\underline{s}+s\cdot\underline{1},\underline{\zeta}-s\cdot\underline{1}}(w^0w_{n\cdot(\underline{a},\underline{b})}vg)dv,
$$
where $w^0=\diag(w_1,w_2)$, $\tilde{V}_{n^m}^{(\underline{a},\underline{b})}$ is the subgroup of elements of the form
$$
\begin{pmatrix}Y_{1,1}&Y_{1,2}&\cdots&Y_{1,r}\\&Y_{2,2}&\cdots&Y_{2,r}\\&&\ddots\\&&&Y_{r,r}\end{pmatrix},
$$
with the same block division as in \eqref{3.5}, where, for $1\leq i\leq j\leq r$,
$$
Y_{i,j}=\begin{pmatrix}y^1_{i,j}&0\\y^3_{i,j}&y^4_{i,j}\end{pmatrix},\ \ y^1_{i,j}\in M_{na_i\times na_j},\ y^3_{i,j}\in M_{nb_i\times na_j},\ y^4_{i,j}\in M_{nb_i\times nb_j},
$$
and for $1\leq i\leq r$, $y^1_{i,i}\in V_{n^{a_i}},\  y^4_{i,i}\in V_{n^{b_i}}, y^3_{i,i}=0$.
The last integral converges absolutely for $Re(s_i-s_{i+1})\gg 0$, $1\leq i<m_1$, $Re(\zeta_i-\zeta_{i+1})\gg 0$, $1\leq i<m_2$, $2Re((s)+s_{m_1}-\zeta_1)\gg 0$.  
Finally, $\eta$ is the permutation of $1,...,m$ induced by $w^0w_{n\cdot(\underline{a},\underline{b})}$. Summarizing, the constant term \eqref{3.1} is 
\begin{equation}\label{3.9}
E^{V_{n^m}}(f_s)(g)=Res_{\underline{s}=\underline{s}^0,\underline{\zeta}=\underline{\zeta}^0}\sum\limits_{(\underline{a},\underline{b})}M_{w^0w_{n\cdot(\underline{a},\underline{b})},\underline{s},\underline{\zeta},s}(f_{\underline{s}+s\cdot\underline{1},\underline{\zeta}-s\cdot\underline{1}})(g),
\end{equation}
for $f_s=Res_{\underline{s}=\underline{s}^0,\underline{\zeta}=\underline{\zeta}^0}(M_{\underline{s}}\otimes M_{\underline{\zeta}})\circ f_{\underline{s}+s\cdot\underline{1},\underline{\zeta}-s\cdot\underline{1}}$.

\subsection{Analytic properties of ${\bf Res_{\underline{s}=\underline{s}^0,\underline{\zeta}=\underline{\zeta}^0}M_{w^0w_{n\cdot(\underline{a},\underline{b})},\underline{s},\underline{\zeta},s}(f_{\underline{s}+s\cdot\underline{1},\underline{\zeta}-s\cdot\underline{1}})}$}

Examining the inversion set of the permutation $\eta$ (the set of $(i,j)$, $1\leq i<j\leq m$, such that $\eta(i)>\eta(j)$), a calculation, using the Gindikin-Karpelevich formula, shows that
\begin{equation}\label{3.10}
Res_{\underline{s}=\underline{s}^0,\underline{\zeta}=\underline{\zeta}^0}M_{w^0w_{n\cdot(\underline{a},\underline{b})},\underline{s},\underline{\zeta},s}(f_{\underline{s}+s\cdot\underline{1},\underline{\zeta}-s\cdot\underline{1}})(I_{mn})=c_{\underline{a},\underline{b}}(s)N_s(f_{\underline{s}^0+s\cdot\underline{1},\underline{\zeta}^0-s\cdot\underline{1}})(I_{mn}),
\end{equation}
$$
c_{\underline{a},\underline{b}}(s)=\prod\limits_{i=1}^{r-1}\prod\limits_{j=1}^{b_i}\frac{L(\tau\times\hat{\tau},2s-\frac{m}{2}+b_1+\cdots+b_{i-1}+j)}{L(\tau\times\hat{\tau},2s-\frac{m}{2}+a_{i+1}+\cdots+a_r+b_1+\cdots+b_{i-1}+j)}.
$$
Here, $N_s=N_{w^0w_{n\cdot(\underline{a},\underline{b})},\underline{s}^0+s\cdot\underline{1},\underline{\zeta}^0-s\cdot\underline{1}}$ denotes the normalized intertwining operator corresponding to $M_{w^0w_{n\cdot(\underline{a},\underline{b})},\underline{s}+s\cdot\underline{1},\underline{\zeta}-s\cdot\underline{1}}$, at $\underline{s}=\underline{s}^0$, $\underline{\zeta}=\underline{\zeta}^0$. By \cite{MW89}, I.10, it is holomorphic at $\underline{s}=\underline{s}^0$, $\underline{\zeta}=\underline{\zeta}^0$, $2Re(s)+\frac{1-m_1}{2}-\frac{m_2-1}{2}\geq 0$, i.e. $Re(s)\geq \frac{m-2}{4}$, and in particular it is holomorphic at $s=\frac{m}{4}=\frac{m_1+m_2}{4}$. Let us examine the factors of $c_{\underline{a},\underline{b}}(s)$ at $s=\frac{m}{4}$. Assume that $r\geq 2$. Note that in the factor corresponding to $(i,j)$, when $b_i\geq 1$, for $s=\frac{m}{4}$,
$$
2s-\frac{m}{2}+b_1+\cdots+b_{i-1}+j=b_1+\cdots+b_{i-1}+j\geq j\geq 1,
$$
$$
2s-\frac{m}{2}+a_{i+1}+\cdots+a_r+b_1+\cdots+b_{i-1}+j=a_{i+1}+\cdots+a_r+b_1+\cdots+b_{i-1}+j\geq j\geq 1.
$$
We conclude that at $s=\frac{m}{4}$, $c_{\underline{a},\underline{b}}(s)$ has at most a simple pole. This shows that \eqref{3.9} has at most a simple pole at $s=\frac{m}{4}$ This also shows that, for $Re(s)>\frac{m}{4}$, \eqref{3.9} is holomorphic.

\subsection{Proof that $s=\frac{m}{4}$ is a pole of $E(f_s)$, for a certain section}
We will choose a specific section $f_s$, such that $E(f_s)$ has a pole at $s=\frac{m}{4}$. Fix a finite place of $F$, $v_0$. Choose $f_s$, such that its support at $v_0$ is in the open cell $P_{n\cdot\underline{m}}(F_{v_0})\begin{pmatrix}&I_{m_2n}\\I_{m_1n}\end{pmatrix}V_{n^m}(F_{v_0})$, compact modulo $P_{n\cdot\underline{m}}(F_{v_0})$ from the left. This implies that, for $g\in \GL_{mn}(\BA)$, such that $g_{v_0}$ does not lie in this open cell, $f_s(g)=0$. For this section, all terms of \eqref{3.9} are zero, except the term with $r=2,\ a_1=0,\ a_2=m_1,\ b_1=m_2,\ b_2=0$. The corresponding function in \eqref{3.10} is
$$
c_{\underline{a},\underline{b}}(s)=\prod\limits_{j=1}^{m_2}\frac{L(\tau\times\hat{\tau},2s-\frac{m}{2}+j)}{L(\tau\times\hat{\tau},2s-\frac{m}{2}+m_2+j)},
$$
which clearly has a simple pole at $s=\frac{m}{4}$. Finally, it is clear that we can choose $f_s$, as above, such that in \eqref{3.10}, $N_s(f_{\underline{s}^0+s\cdot\underline{1},\underline{\zeta}^0-s\cdot\underline{1}})(I_{mn})$ is nonzero at $s=\frac{m}{4}$. This completes the proof of Theorem \ref{thm 3.1}.

\section{The positive poles of $E(f_{\Delta(\tau,\underline{m}),\underline{s}})$}

In this section, we find the list of poles with positive part of $E(f_{\Delta(\tau,\underline{m}),s})$. We first find a list of such poles as a corollary of Theorem \ref{thm 3.1}. Then we prove that these poles are simple and that there are no other poles. 

\subsection{Poles of $E(f_{\Delta(\tau,\underline{m}),s})$ derived from Theorem \ref{thm 3.1}}

\begin{thm}\label{thm 4.1}
The Eisenstein series $E(f_{\Delta(\tau,\underline{m}),s})$ has poles at \begin{equation}\label{4.1}s=\frac{m}{4}-\frac{i}{2},\ \  0\leq i\leq min\{m_1,m_2\}-1.
\end{equation}
\end{thm}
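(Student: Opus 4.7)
The plan is to prove Theorem \ref{thm 4.1} by induction on $m = m_1 + m_2$, exploiting the descent operation of Theorem \ref{thm 2.2}. Without loss of generality assume $m_1 \geq m_2$. The case $\min(m_1,m_2) = 1$ concerns only $i=0$, which is already covered by Theorem \ref{thm 3.1}, so I may assume $\min(m_1,m_2) \geq 2$ and that Theorem \ref{thm 4.1} holds for all pairs whose total is smaller than $m$.

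The key arithmetic observation is that the candidate pole $s(i) = \frac{m}{4} - \frac{i}{2}$ for $(m_1,m_2)$ coincides with the candidate pole $\frac{m-2}{4} - \frac{i-1}{2}$ for $(m_1-1, m_2-1)$. Thus, for each $i$ with $1 \leq i \leq \min(m_1,m_2) - 1$, the inductive hypothesis produces a section $\tilde f_s$ of $\rho_{\Delta(\tau,\underline{m}-\underline{1}),s}$ whose Eisenstein series has a pole at $s(i)$. By Theorem \ref{thm 2.2},
\[
\mathcal{D}_{\psi,2n}(E(f_{\Delta(\tau,\underline{m}),s}))(r(g)) = E(\Lambda(f_{\Delta(\tau,\underline{m}),s}))(g),
\]
where the right-hand side is the Eisenstein series on $\GL_{(m-2)n}(\BA)$ attached to $|\det\cdot|^{(2n-1)/2}\rho_{\Delta(\tau,\underline{m}-\underline{1}),s}$; the $|\det\cdot|^{(2n-1)/2}$ factor is $s$-independent and does not affect pole locations. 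Since $\mathcal{D}_{\psi,2n}$ is a Fourier coefficient integration over a compact adelic quotient, it cannot create new poles, and Theorem \ref{thm 2.3} guarantees that $\Lambda(f_{\Delta(\tau,\underline{m}),s})$ is itself holomorphic at $s(i)$ (because $2\Re(s(i)) \geq 0 \geq (m_2 - m_1)/2$, so condition \eqref{2.21} is satisfied). Consequently any pole of the descended Eisenstein series at $s(i)$ is forced to come from a pole of $E(f_{\Delta(\tau,\underline{m}),s})$ at $s(i)$.

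It remains to realize such a pole by a suitable choice of $f_{\Delta(\tau,\underline{m}),s}$. This is where Prop. \ref{prop 2.4} is essential: locally at each place, the specializations $\Lambda_v(f_{s,v})$ at $s=s(i)$ are surjective onto the smaller induced space for finite $v$ and have dense image for archimedean $v$. Globalizing via restricted tensor products, the image of the map $f_s \mapsto \Lambda(f_s)|_{s=s(i)}$ is dense in the global smooth induced space of $\rho_{\Delta(\tau,\underline{m}-\underline{1}),s(i)}$. Since the residue at $s(i)$ of the smaller Eisenstein series depends continuously on the section in the relevant Fr\'echet topology, and since by the inductive hypothesis some section $\tilde f_s$ gives a nonzero residue there, density lets me choose $f_{\Delta(\tau,\underline{m}),s}$ so that $\Lambda(f_{\Delta(\tau,\underline{m}),s})$ produces a nonvanishing residue on the descended side. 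This forces $E(f_{\Delta(\tau,\underline{m}),s})$ to have a pole at $s(i)$, completing the induction.

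The main obstacle is precisely this last step: lifting the pole back through the descent requires combining the archimedean density statement of Prop. \ref{prop 2.4} with a continuity/openness argument for the residue functional on sections. The surjectivity at finite places handles those coordinates cleanly; the care is needed to argue that an arbitrarily small perturbation of $\tilde f_s$ in the archimedean factor still yields a nonzero residue, so that the dense image of $\Lambda$ suffices to detect the pole.
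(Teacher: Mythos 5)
Your argument is essentially the paper's own proof: induction on $m=m_1+m_2$ with base case handled by Theorem \ref{thm 3.1}, descent via Theorem \ref{thm 2.2}, holomorphy of $\Lambda(f_{\Delta(\tau,\underline{m}),s})$ at $s(i)$ from Theorem \ref{thm 2.3}, and Proposition \ref{prop 2.4} (surjectivity at finite places, density plus continuity of the residue functional at archimedean ones) to force the pole back up to $E(f_{\Delta(\tau,\underline{m}),s})$. The one caveat is that ``WLOG $m_1\ge m_2$'' is not free for a fixed ordering of the inducing data (the two orderings give genuinely different Eisenstein series, related only through an intertwining operator); the paper instead treats $m_1<m_2$ with the primed descent $\mathcal{D}'_{\psi,2n}$ of Theorems \ref{thm 2.6}, \ref{thm 2.7} and Proposition \ref{prop 2.8}, though one can also check that condition \eqref{2.21} still holds at every $s(i)$ when $m_1<m_2$, since $2s(i)\ge \frac{m_2-m_1}{2}+1$.
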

\begin{proof}
The proof is similar to the proof of Theorem 2.2, part I, in \cite{GS22}. The idea is to apply repeatedly the descent to our Eisenstein series and use Theorem \ref{thm 3.1} at each step. Assume that $m_2\leq m_1$. We prove the theorem by induction on $m=m_1+m_2$. If $m=2$, then $m_1=m_2=1$ and this is a special case of Theorem \ref{thm 3.1}, so there is nothing to prove. Assume that $m>2$ and that the statement of the theorem is true for smaller values of $m$. If $m_2=1$, then the theorem follows from Theorem \ref{thm 3.1}. Assume that $m_2>1$. By Theorem \ref{thm 2.2}, for $g\in \GL_{(m-2)n}(\BA)$, 
\begin{equation}\label{4.2}
\mathcal{D}_{\psi,2n}(E(f_{\Delta(\tau,\underline{m}),s}))(r(g))=|\det\cdot|^{\frac{2n-1}{2}}E(\varphi_{\Delta(\tau,m_1-1)|\cdot|^s\times\Delta(\tau,m_2-1)|\cdot|^{-s}})(g),
\end{equation}
where $E(\varphi_{\Delta(\tau,m_1-1)|\cdot|^s\times\Delta(\tau,m_2-1)|\cdot|^{-s}})$ is an Eisenstein series on $\GL_{(m-2)n}(\BA)$, corresponding to the section of 
$\Delta(\tau,m_1-1)|\cdot|^s\times\Delta(\tau,m_2-1)|\cdot|^{-s}$,
$$
\varphi_{\Delta(\tau,m_1-1)|\cdot|^s\times\Delta(\tau,m_2-1)|\cdot|^{-s}}=\Lambda(f_{\Delta(\tau,\underline{m}),s}).
$$
Let $s(i)=\frac{m}{4}-\frac{i}{2}$, $1\leq i\leq m_2-1$. Then $s(i)=\frac{(m_1-1)+(m_2-1)}{4}-\frac{i-1}{2}$, $0\leq i-1\leq (m_2-1)-1$. By Theorem \ref{thm 2.3}, since we assume that $m_2\leq m_1$,\\ $\varphi_{\Delta(\tau,m_1-1)|\cdot|^s\times\Delta(\tau,m_2-1)|\cdot|^{-s}}$ is holomorphic at $s(i)$. By Prop. \ref{prop 2.4} and the induction hypothesis, $\mathcal{D}_{\psi,2n}(E(f_{\Delta(\tau,\underline{m}),s}))(I_{(m-2)n})$ has a pole at $s(i)$, for some section $f_{\Delta(\tau,\underline{m}),s}$. This implies that $E(f_{\Delta(\tau,\underline{m}),s})$ has a pole at $s(i)$. For $i=0$, this is Theorem \ref{thm 3.1}.

We note again that when $m_1<m_2$, we use Prop. \ref{prop 2.5}, Theorem \ref{thm 2.6}, Theorem \ref{thm 2.7}, and Prop. \ref{prop 2.8}. We get the analog of \eqref{4.2},
\begin{equation}\label{4.3}
\mathcal{D'}_{\psi,2n}(E(f_{\Delta(\tau,\underline{m}),s}))(r'(g))=|\det\cdot|^{\frac{1-2n}{2}}E(\varphi'_{\Delta(\tau,m_1-1)|\cdot|^s\times\Delta(\tau,m_2-1)|\cdot|^{-s}})(g),
\end{equation}
$$
\varphi'_{\Delta(\tau,m_1-1)|\cdot|^s\times\Delta(\tau,m_2-1)|\cdot|^{-s}}=\Lambda'(f_{\Delta(\tau,\underline{m}),s}).
$$
We get in exactly the same way that the points $s=\frac{m}{4}-\frac{i}{2}$, $0\leq i\leq m_1-1$ are all poles of $E(f_{\Delta(\tau,\underline{m}),s})$.	
	
\end{proof}

We remark that we don't know yet that the poles \eqref{4.1} of our Eisenstein series are simple, nor that these are all the poles with positive real part.

\subsection{Proof of simplicity of poles and that there are no other poles with positive real part}

\begin{thm}\label{thm 4.2}
The set \eqref{4.1} is the full set of poles with positive real part of the Eisenstein series $E(f_{\Delta(\tau,\underline{m}),s})$. They are all simple.
\end{thm}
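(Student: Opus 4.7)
The plan is to induct on $m = m_1 + m_2$; the base case $m = 2$ is Theorem \ref{thm 3.1}. Assume the statement for all smaller totals. Without loss of generality take $m_1 \ge m_2$, so that by Theorem \ref{thm 2.3} the section $\Lambda(f_{\Delta(\tau,\underline{m}),s})$ is holomorphic on $\Re(s) \ge 0$ (the required bound $\Re(2s) \ge (m_2 - m_1)/2$ being automatic). Suppose $s_0$ is a pole of order $d \ge 1$ of $E(f_{\Delta(\tau,\underline{m}),s})$ with $\Re(s_0) > 0$, and let $\pi = \pi_{s_0}$ be the automorphic representation of $\GL_{mn}(\BA)$ generated by the leading Laurent coefficients $\lim_{s \to s_0}(s - s_0)^d E(f_{\Delta(\tau,\underline{m}),s})$ as the section varies. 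I would split into two cases according to whether $\mathcal{D}_{\psi,2n}(\pi) = 0$ or not.

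In the first case, $\mathcal{D}_{\psi,2n}(\pi) \ne 0$. By Theorem \ref{thm 2.2}, after restriction through $r$, the descent $\mathcal{D}_{\psi,2n}(E(f_{\Delta(\tau,\underline{m}),s}))$ equals, up to the factor $|\det\cdot|^{(2n-1)/2}$, the Eisenstein series on $\GL_{(m-2)n}(\BA)$ associated to the section $\Lambda(f_{\Delta(\tau,\underline{m}),s})$ of $\rho_{\Delta(\tau,\underline{m}-\underline{1}),s}$; taking leading Laurent coefficients at $s_0$, $\pi$ maps non-trivially to the corresponding leading Laurent coefficient of this smaller Eisenstein series. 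By Proposition \ref{prop 2.4}, as $f$ varies, the sections $\Lambda(f_{\Delta(\tau,\underline{m}),s})$ at $s_0$ cover adelically a dense subspace of $\rho_{\Delta(\tau,\underline{m}-\underline{1}),s_0}$, so $s_0$ is a pole of order at least $d$ of the smaller Eisenstein series. The inductive hypothesis then forces $d = 1$ and $s_0 = \frac{(m_1 - 1) + (m_2 - 1)}{4} - \frac{j}{2} = \frac{m}{4} - \frac{j+1}{2}$ for some $0 \le j \le \min(m_1,m_2) - 2$, i.e. $s_0 = s(i)$ with $1 \le i \le \min(m_1,m_2) - 1$.

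In the second case, $\mathcal{D}_{\psi,2n}(\pi) = 0$. By Proposition \ref{prop 2.1} and Proposition \ref{prop 1.4}, this is equivalent to the vanishing of $\mathcal{F}_{\psi_{(2n,1^{(m-2)n})}}$ on $\pi$. Combined with the general upper bound $\mathcal{O}(\pi) \le ((2n)^{m_2}, n^{m_1-m_2})$ (inherited from the unramified-place computation behind \eqref{2.2}), this forces $\mathcal{O}(\pi) = \{(n^m)\}$; this is the content of the promised Proposition \ref{prop 4.3}, whose proof would rule out every intermediate partition between $(n^m)$ and the upper bound by invoking the maximality in the definition of $\mathcal{O}(\pi)$ and the vanishing above. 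Granted this, Theorem \ref{thm 1.6} gives that $\mathcal{Z}_{\psi,n,m}$ is non-trivial on $\pi$ and supplies the explicit identity \eqref{1.60} expressing $\mathcal{F}_{\psi_{(n^m)}}$ in terms of $\mathcal{Z}_{\psi,n,m}$.

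The idea is then to use the $\delta_n(\SL_m(\BA))$-invariance of Theorem \ref{thm 1.7} to pin down $s_0$. Pushing $\mathcal{F}_{\psi_{(n^m)}}(\varphi)(\delta_n(a) g) = \mathcal{F}_{\psi_{(n^m)}}(\varphi)(g)$ through \eqref{1.60} yields a precise equivariance property of $\mathcal{Z}_{\psi,n,m}(\varphi)$ under $\delta_n(\SL_m(\BA))$. On the other hand, $\mathcal{Z}_{\psi,n,m}$ applied to the leading Laurent coefficient at $s_0$ can be computed directly from the constant-term formula \eqref{3.9}: one takes the constant term along $V_{n^m}$ and then applies block-wise Jacquet/Whittaker integrals on each $\GL_n$-factor, producing an explicit expression involving the normalized intertwining operators $N_s$ and the central characters of $\Delta(\tau,m_i)|\cdot|^{\pm s_0}$. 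Left-translation by $\delta_n(a)$ modifies this expression by a character of $\SL_m(\BA)$ which is a linear function of $s_0$; forcing this character to be trivial (to match the previously derived equivariance) yields the unique solution $s_0 = m/4 = s(0)$, which is a simple pole by Theorem \ref{thm 3.1}. The main obstacle will be twofold: completing the proof of Proposition \ref{prop 4.3} (the sharpening of the orbit bound once the $(2n,1^{(m-2)n})$-coefficient is known to vanish), and the careful scalar bookkeeping required to transfer the $\delta_n(\SL_m(\BA))$-invariance from $\mathcal{F}_{\psi_{(n^m)}}$ to $\mathcal{Z}_{\psi,n,m}$ and to read off, from the constant-term calculation, the exact $s_0$-dependent character on $\SL_m$ that must be trivial.
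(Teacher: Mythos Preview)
Your overall plan matches the paper's proof: induction on $m$, the case split according to whether $\mathcal{D}_{\psi,2n}(\pi)$ vanishes, and in the vanishing case the chain Proposition~\ref{prop 4.3} $\Rightarrow$ Theorem~\ref{thm 1.6} $\Rightarrow$ Theorem~\ref{thm 1.7} feeding into the constant-term formula~\eqref{3.9}. The nonvanishing case is exactly as in the paper.

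Where your sketch diverges from the actual argument is the endgame in the vanishing case. You describe ``a character of $\SL_m(\BA)$ which is a linear function of $s_0$'' whose triviality forces $s_0=m/4$. In fact, after passing through \eqref{1.60} and \eqref{3.9}, the expression $\mathcal{F}_{\psi_{(n^m)}}(\varphi)$ is a \emph{sum} over all pairs $(\underline{a},\underline{b})$ of terms, and translation by diagonal $c\in\SL_m(\BA)$ multiplies the $(\underline{a},\underline{b})$-term by its own character $\chi_{\underline{a},\underline{b},s_0}(c)$. The invariance from Theorem~\ref{thm 1.7} together with linear independence of characters forces every packet of terms with a common nontrivial $\chi_{\underline{a},\underline{b},s_0}$ to sum to zero. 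The paper then checks directly (equations \eqref{4.14}--\eqref{4.17}, computing the exponent $n_1(\underline{a},\underline{b},s_0)$ of $|c_1|$) that \emph{every} $\chi_{\underline{a},\underline{b},s_0}$ is nontrivial when $\Re(s_0)<m/4$, so the whole sum vanishes, contradicting $\mathcal{O}(\pi)=(n^m)$. Since $s_0=m/4$ is already covered by Theorem~\ref{thm 3.1}, this finishes. So the conclusion is reached by contradiction, not by solving a single character equation; your ``scalar bookkeeping'' step needs to be a case-by-case computation over the combinatorial types $(\underline{a},\underline{b})$ showing none of their characters can be trivial.
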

\begin{proof}
The proof is by induction on $m=m_1+m_2$. If $m=2$, then our Eisenstein series corresponds to $\tau|\cdot|^s\times\tau|\cdot|^{-s}$. In this case, the theorem is well known. In fact, the formula \eqref{3.9} becomes very simple. In the same notation,
$$
E^{V_{n^2}}(f_s)=f_s+M_s(f_s),
$$
where $M_s$ is the intertwining operator with respect to $\begin{pmatrix}&I_n\\I_n\end{pmatrix}$. The corresponding term $c(s)$ in \eqref{3.10} is $\frac{L(\tau\times\hat{\tau},2s)}{L(\tau\times\hat{\tau},2s+1)}$. The poles with positive real part of this ratio are those of $L(\tau\times\hat{\tau},2s)$, i.e. $s=\frac{1}{2}$.

Assume, by induction, that the statement of the theorem is corret, for all $m'_1+m'_2<m$. Let $s_0$ be a pole of $E(f_{\Delta(\tau,\underline{m}),s})$, with $Re(s_0)>0$. Consider the Laurent expansion of $E(f_{\Delta(\tau,\underline{m}),s})$ around $s_0$. Assume that $s_0$ is a pole of order $e$. Denote by $\pi=\pi(\tau,\underline{m},s_0)$ the automorphic representation of $\GL_{mn}(\BA)$ generated by the leading terms $A_{-e}(f_{\Delta(\tau,\underline{m}),s_0})$ of the last Laurent expansion, as the section $f_{\Delta(\tau,\underline{m}),s}$ varies. Assume that $\mathcal{D}_{\psi,2n}(\pi)\neq 0$. By Theorems \ref{thm 2.2}, \ref{thm 2.3}, \ref{thm 2.6}, \ref{thm 2.7}, and Propositions \ref{prop 2.4}, \ref{2.8}, $s_0$ is a pole of the descent (via $\mathcal{D}_{\psi,2n}$, or $\mathcal{D}'_{\psi,2n}$) of $E(f_{\Delta(\tau,\underline{m}),s})$, which is, up to a power of $|\det\cdot|$, an Eisenstein series corresponding to $\Delta(\tau,m_1-1)|\cdot|^s\times \Delta(\tau,m_2-1)|\cdot|^{-s}$. By induction, we have that $e=1$, and hence $s_0$ is a simple pole of $E(f_{\Delta(\tau,\underline{m}),s})$. Moreover, there is $0\leq i\leq min\{m_1-1,m_2-1\}-1$, such that 
$$
s_0=\frac{m-2}{4}-\frac{i}{2}=\frac{m}{4}-\frac{i+1}{2}.
$$
Note that $1\leq i+1\leq min\{m_1.m_2\}-1$. Thus, it remains to prove the theorem when $\mathcal{D}_{\psi,2n}(\pi)=0$.

\begin{prop}\label{prop 4.3}
Keeping the notation above, assume that $\mathcal{D}_{\psi,2n}(\pi)=0$. Then
\begin{equation}\label{4.4}
\mathcal{O}(\pi)=(n^m).
\end{equation}
\end{prop}

\begin{proof}
Since $\pi$ is concentrated on $P_{n^m}$, and $\tau$ is generic, $\pi$ admits the nontrivial Fourier coefficient \eqref{1.48}, $\mathcal{Z}_{\psi, n,m}$. By Theorem \ref{thm 1.6}, the Fourier coefficient $\mathcal{F}_{\psi_{(n^m)}}$ is nontrivial on $\pi$. Let $\underline{p}$ be a partition of $mn$, attached to an element of $\mathcal{O}(\pi)$. By \eqref{2.2}, $\underline{p}$ is bounded by $((2n)^{m_2},n^{m_1-m_2})$.	The partition $\underline{p}$ cannot be of the form $((2n)^r,...)$ with $1\leq r\leq m_2$. Indeed, in such a case, it follows, exactly as in \cite{GRS03}, Lemma 2.6, that $\mathcal{F}_{\psi_{(2n,1^{(m-2)n})}}$ is nontrivial on $\pi$. By Prop. \ref{prop 1.4} and Prop. \ref{prop 2.1}, we get that $\mathcal{D}_{\psi,2n}(\pi)\neq 0$, contradicting our assumption. Thus, $\underline{p}$ has the form $(n',...)$, $n'<2n$. Note that $n'\geq n$, since otherwise $\underline{p}<(n^m)$, which is impossible, since $\mathcal{F}_{\psi_{(n^m)}}$ is nontrivial on $\pi$. Assume that $n<n'<2n$. Then, as we just argued, as in \cite{GRS03}, Lemma 2.6, it follows that $\mathcal{F}_{\psi_{(n',1^{mn-n'})}}$ is nontrivial on $\pi$. By Prop. \ref{prop 1.4}, $\mathcal{D}^0_{\psi,n'}(\pi)\neq 0$. The same proof as that of Prop. \ref{prop 2.1} (with $n'$ in place of $kn$) shows that $\mathcal{D}_{\psi,n'}(\pi)\neq 0$. This implies that the constant term along $V_{mn-n',n'}$ is nontrivial on $\pi$. Since $\tau$ is a cuspidal representation of $\GL_n(\BA)$, $n'$ must be a multiple of $n$. This is impossible, since $n<n'<2n$. This shows that $n'=n$. Since $\mathcal{F}_{\psi_{(n^m)}}$ is nontrivial on $\pi$, we conclude that $\underline{p}=(n^m)$.	
	
\end{proof}

The last proposition allows us to use Theorem \ref{thm 1.7}. We will use it for $a=\diag(c_1,...,c_m)$, $c_1,...,c_m\in \BA^*$, $c_1\cdot\dots\cdot c_m=1$, in \eqref{1.61}. Re-denote $c=\diag(c_1,...,c_m)$. For $\varphi\in \pi$,
\begin{equation}\label{4.5}
\mathcal{F}_{\psi_{n^m}}(\varphi)(\delta_n(c)g)=\mathcal{F}_{\psi_{n^m}}(\varphi)(g).
\end{equation}
We may take $\varphi=A_{-e}(f_{\Delta(\tau,\underline{m}),s_0})$. Denote by $B_{-e,\psi_{n^m}}(f_{\Delta(\tau,\underline{m}),s_0})$ the coefficient of $(s-s_0)^{-e}$ in the Laurent expansion around $s_0$ of $\mathcal{F}_{\psi_{n^m}}(E(f_{\Delta(\tau,\underline{m}),s}))$. We denote similarly by $C_{-e,\psi_{n^m}}(f_{\Delta(\tau,\underline{m}),s_0})$ the coefficient of $(s-s_0)^{-e}$ in the Laurent expansion of $\mathcal{Z}_{\psi,n,m}(E(f_{\Delta(\tau,\underline{m}),s}))$ around $s_0$. Then
\begin{equation}\label{4.6}
\mathcal{F}_{\psi_{n^m}}(\varphi)(g)=B_{-e,\psi_{n^m}}(f_{\Delta(\tau,\underline{m}),s_0})(g).
\end{equation}
Let us apply Theorem \ref{thm 1.6}. More precisely, let us use \eqref{1.60}. Using \eqref{4.6}, we get, in the notation of \eqref{1.60},
\begin{equation}\label{4.7}
B_{-e,\psi_{n^m}}(f_{\Delta(\tau,\underline{m}),s_0})(g)=
\int\limits_{Y(\BA)}C_{-e,\psi_{n^m}}(f_{\Delta(\tau,\underline{m}),s_0})(yw_0g)dy.
\end{equation}
Let us use \eqref{3.9}, with its notations, to explicate the r.h.s. of \eqref{4.7}. Denote by $D_{-e,\psi_{n^m},\underline{a},\underline{b}}(f_{\underline{s}^0+s_0\cdot\underline{1},\underline{\zeta}^0-s_0\cdot\underline{1}})$ the coefficient of $(s-s_0)^{-e}$ in the Laurent expansion around $s_0$ of $Res_{\underline{s}=\underline{s}^0,\underline{\zeta}=\underline{\zeta}^0}M^{\psi_{Z^m_n}}_{w^0w_{n\cdot(\underline{a},\underline{b})},\underline{s},\underline{\zeta},s}(f_{\underline{s}+s\cdot\underline{1},\underline{\zeta}-s\cdot\underline{1}})$, where\\
\\
$M^{\psi_{Z^m_n}}_{w^0w_{n\cdot(\underline{a},\underline{b})},\underline{s},\underline{\zeta},s}(f_{\underline{s}+s\cdot\underline{1},\underline{\zeta}-s\cdot\underline{1}})(g)=$

$$
=\int\limits_{[Z_n]^m}M_{w^0w_{n\cdot(\underline{a},\underline{b})},\underline{s},\underline{\zeta},s}(f_{\underline{s}+s\cdot\underline{1},\underline{\zeta}-s\cdot\underline{1}})(\diag(z_1,...,z_m)g)\psi^{-1}_{Z_n}(z_1\cdots z_m)dz.
$$
Then by \eqref{4.5}, we have that
\begin{equation}\label{4.8}
\sum\limits_{(\underline{a},\underline{b})}\int\limits_{Y(\BA)}D_{-e,\psi_{n^m},\underline{a},\underline{b}}(f_{\underline{s}^0+s_0\cdot\underline{1},\underline{\zeta}^0-s_0\cdot\underline{1}})(yw_0\delta_n(c)g)dy
\end{equation}
is independent of $c_1,...,c_m\in \BA^*$, such that $c_1\cdots c_m=1$. 
Note that
$$
w_0\delta_n(c)w_0^{-1}=\diag(c_1I_n,...,c_mI_n):=d_n(c).
$$
Then \eqref{4.8} is equal to
\begin{equation}\label{4.9}
\delta_{B_m}^{-\frac{n(n-1)}{2}}(c)\sum\limits_{(\underline{a},\underline{b})}\int\limits_{Y(\BA)}D_{-e,\psi_{n^m},\underline{a},\underline{b}}(f_{\underline{s}^0+s_0\cdot\underline{1},\underline{\zeta}^0-s_0\cdot\underline{1}})(d_n(c)yw_0g)dy
.
\end{equation}
We have
\begin{equation}\label{4.10}
w^0w_{n\cdot(\underline{a},\underline{b})}d_n(c)w^{-1}_{n\cdot(\underline{a},\underline{b})}(w^0)^{-1}=\diag(\eta_1(c),...,\eta_r(c),\mu_1(c),...,\mu_r(c)),
\end{equation}
$$
\eta_i(c)=\diag(c_{\sum\limits_{j\leq i-1}(a_j+b_j)+a_i}I_n,...,c_{\sum\limits_{j\leq i-1}(a_j+b_j)+2}I_n,c_{\sum\limits_{j\leq i-1}(a_j+b_j)+1}I_n),\ \ 1\leq i\leq r,
$$
$$
\mu_i(c)=\diag(c_{\sum\limits_{j\leq i}(a_j+b_j)I_n},...,c_{\sum\limits_{j\leq i-1}(a_j+b_j)+a_i+2}I_n,c_{\sum\limits_{j\leq i-1}(a_j+b_j)+a_i+1}I_n),\ \ 1\leq i\leq r.
$$
Let $\chi$ denote the following character of the center of $M_{n^m}(\BA)$.\\  For $d_n(t)=\diag(t_1I_n,...,t_mI_n)$, $t_1,...,t_m\in \BA^*$,
$$
\chi(d_n(t))=\prod\limits_{i=1}^{m_1}|t_i|^{n(\frac{2i-1-m_1}{2}+s_0)}\prod\limits_{j=1}^{m_2}|t_{m_1+j}|^{n(\frac{2j-1-m_2}{2}-s_0)}.
$$
Then by \eqref{4.10},
\begin{equation}\label{4.11}
\delta_{B_m}^{-\frac{n(n-1)}{2}}(c)D_{-e,\psi_{n^m},\underline{a},\underline{b}}(f_{\underline{s}^0+s_0\cdot\underline{1},\underline{\zeta}^0-s_0\cdot\underline{1}})(d_n(c)h)=
\end{equation}
$$
=\chi_{\underline{a},\underline{b},s_0}(c)D_{-e,\psi_{n^m},\underline{a},\underline{b}}(f_{\underline{s}^0+s_0\cdot\underline{1},\underline{\zeta}^0-s_0\cdot\underline{1}})(h),
$$
where 
$$
\chi_{\underline{a},\underline{b},s_0}(c)=\delta_{B_m}^{-\frac{n(n-1)}{2}}(c)\delta_{P_{n^m}}^{\frac{1}{2}}(d_n(c))\chi(w^0w_{n\cdot(\underline{a},\underline{b})}d_n(c)w^{-1}_{n\cdot(\underline{a},\underline{b})}(w^0)^{-1})=
$$
$$
=\delta_{B_m}^{\frac{n}{2}}(c)\chi(w^0w_{n\cdot(\underline{a},\underline{b})}d_n(c)w^{-1}_{n\cdot(\underline{a},\underline{b})}(w^0)^{-1}).
$$
Thus,
\begin{equation}\label{4.12}
\sum\limits_{(\underline{a},\underline{b})}\chi_{\underline{a},\underline{b},s_0}(c)\int\limits_{Y(\BA)}D_{-e,\psi_{n^m},\underline{a},\underline{b}}(f_{\underline{s}^0+s_0\cdot\underline{1},\underline{\zeta}^0-s_0\cdot\underline{1}})(yw_0g)dy
\end{equation}
is independent of $c_1,...,c_m\in \BA^*$, $c_1\cdots c_m=1$. It follows that if $\chi_{\underline{a},\underline{b},s_0}$ is nontrivial, then it does not contribute to \eqref{4.12}. Indeed, let $(\underline{\alpha}^i,\underline{\beta}^i)$, $1\leq i\leq k_0$, all the pairs in \eqref{4.12}, such that $\chi_{\underline{\alpha}^i,\underline{\beta}^i,s_0}=\chi_{\underline{a},\underline{b},s_0}$. Then, by linear independence of characters, we must have that
\begin{equation}\label{4.13}
\sum\limits_{i=1}^{k_0}\int\limits_{Y(\BA)}D_{-e,\psi_{n^m},\underline{\alpha}^i,\underline{\beta}^i}(f_{\underline{s}^0+s_0\cdot\underline{1},\underline{\zeta}^0-s_0\cdot\underline{1}})(yw_0g)dy=0.
\end{equation}
Thus, for $(\underline{a},\underline{b})$ to contribute to \eqref{4.12}, it is necessary that $\chi_{\underline{a},\underline{b},s_0}$ be trivial. Note that \eqref{4.12} is nontrivial since, by \eqref{4.5}-\eqref{4.9}, it is equal to $\mathcal{F}_{\psi_{n^m}}(\varphi)$, which is nontrivial on $\pi$. Write
$$
\chi_{\underline{a},\underline{b},s_0}(c)=\prod\limits_{i=1}^{m-1}|c_i|^{n_i(\underline{a},\underline{b},s_0)}.
$$ 
By Theorem \ref{thm 3.1}, we may assume that $Re(s_0)<\frac{m}{4}$. Assume that $a_1\geq 1$, $b_r\geq 1$. Then, looking at \eqref{4.10}, and using that $|c_m|=|c_1\cdots c_{m-1}|^{-1}$, we see that 
\begin{equation}\label{4.14}
n_1(\underline{a},\underline{b},s_0)=n(2s_0+\frac{3m}{2}-(a_1+b_r)). 
\end{equation}
Since $Re(s_0)\geq 0$ and $a_1+b_r\leq m$, $n_1(\underline{a},\underline{b},s_0)\geq \frac{m}{2}$, and hence $\chi_{\underline{a},\underline{b},s_0}$ is nontrivial. Assume that $a_1\geq 1$, $b_r=0$. Then 
\begin{equation}\label{4.15}
n_1(\underline{a},\underline{b},s_0)=n(m_1-(a_1+a_r)+m)\geq mn.
\end{equation}
and hence $\chi_{\underline{a},\underline{b},s_0}$ is nontrivial. Thus, for $\chi_{\underline{a},\underline{b},s_0}$ to be trivial, we must have $a_1=0$. It follows that $b_1\geq 1$. If $b_r\geq 1$, then 
\begin{equation}\label{4.16}
n_1(\underline{a},\underline{b},s_0)=n(m_1-(b_1+b_r)+m)\geq mn. 
\end{equation}
Hence $b_r=0$, and then 
\begin{equation}\label{4.17}
n_1(\underline{a},\underline{b},s_0)=n(\frac{3m}{2}-2s_0-(a_r+b_1)).
\end{equation}
For this to be zero, we must have $2s_0-\frac{m}{2}=m-(a_r+b_1)\geq 0$, and hence $s_0$ is real and $s_0\geq \frac{m}{4}$, contradicting our assumption that $Re(s_0)<\frac{m}{4}$. This concludes the proof of Theorem \ref{thm 4.2}.

\end{proof}

\section{$E(f_{\Delta(\tau,(k,k)),s})$ is zero at $s=0$}
We will use the ideas in the last section to prove
\begin{thm}\label{thm 5.1}
For any smooth holomorphic section $f_{\Delta(\tau,(k,k)),s}$,
$$
E(f_{\Delta(\tau,(k,k)),0})=0.
$$
\end{thm}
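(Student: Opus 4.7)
The plan is to proceed by induction on $k$, following the same dichotomy used in the proof of Theorem \ref{thm 4.2}. Note first that $s=0$ is a regular point of $E(f_{\Delta(\tau,(k,k)),s})$, since by Theorem \ref{thm 4.2} (applied with $m_1=m_2=k$) the poles with $\Re(s)\geq 0$ lie at $s(i)=\frac{k}{2}-\frac{i}{2}>0$ for $0\leq i\leq k-1$. The base case $k=1$ is the result of Keys-Shahidi cited in the introduction. For the inductive step, assume the theorem for $k-1$, and suppose for contradiction that $E(f_{\Delta(\tau,(k,k)),0})$ is nonzero for some section. Let $\pi$ be the automorphic representation of $\GL_{2kn}(\BA)$ generated by these values.

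First I would rule out the case $\mathcal{D}_{\psi,2n}(\pi)\neq 0$. The holomorphy condition \eqref{2.21} reads $\Re(s_1-s_2)\geq (m_2-m_1)/2$, which at $(s_1,s_2)=(0,0)$ and $m_1=m_2=k$ holds with equality; hence $\Lambda(f_{\Delta(\tau,(k,k)),s})$ is holomorphic at $s=0$ by Theorem \ref{thm 2.3}. Theorem \ref{thm 2.2} then identifies $\mathcal{D}_{\psi,2n}(E(f_{\Delta(\tau,(k,k)),s}))\circ r$, at $s=0$, with (a power of $|\det\cdot|$ times) the value at $s=0$ of an Eisenstein series on $\GL_{2(k-1)n}(\BA)$ attached to $\Delta(\tau,k-1)|\cdot|^s\times\Delta(\tau,k-1)|\cdot|^{-s}$ with section $\Lambda(f_{\Delta(\tau,(k,k)),s})$. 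By Proposition \ref{prop 2.4}, as $f_{\Delta(\tau,(k,k)),s}$ varies, $\Lambda(f_{\Delta(\tau,(k,k)),s})|_{s=0}$ exhausts (densely in the archimedean case) the sections of $\rho_{\Delta(\tau,(k-1,k-1)),0}$. The inductive hypothesis forces all such Eisenstein series to vanish at $s=0$, whence $\mathcal{D}_{\psi,2n}(\pi)=0$, a contradiction.

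Thus $\mathcal{D}_{\psi,2n}(\pi)=0$, and Proposition \ref{prop 4.3} gives $\mathcal{O}(\pi)=(n^{2k})$. Now I would transport the $\SL_{2k}$-invariance of $\mathcal{F}_{\psi_{(n^{2k})}}$ from Theorem \ref{thm 1.7} through the explicit formula \eqref{1.60} to the constant-term/Whittaker coefficient $\mathcal{Z}_{\psi,n,2k}$, using the decomposition \eqref{3.9} of the constant term along $V_{n^{2k}}$ and the resulting expression \eqref{4.12}, evaluated at $s_0=0$. As in the proof of Theorem \ref{thm 4.2}, this expression must be independent of $c=\diag(c_1,\dots,c_{2k})$ with $c_1\cdots c_{2k}=1$, so the only pairs $(\underline{a},\underline{b})$ that can contribute are those for which the character $\chi_{\underline{a},\underline{b},0}$ is trivial on the adelic torus of $\SL_{2k}$.

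The core of the argument — and the step I expect to be the main obstacle — is then a direct verification that at $s_0=0$ with $m_1=m_2=k$, no pair $(\underline{a},\underline{b})$ yields a trivial $\chi_{\underline{a},\underline{b},0}$. Working through the four cases of \eqref{4.14}--\eqref{4.17} with $2s_0+\tfrac{3m}{2}=3k$ and the constraints $a_1+b_r,\ a_1+a_r,\ b_1+b_r,\ a_r+b_1\leq 2k$, the exponent $n_1(\underline{a},\underline{b},0)$ of $|c_1|$ is bounded below by $nk>0$ in every case; in particular case \eqref{4.17} would require $a_r+b_1=3k$, which is impossible since $a_r\leq m_1=k$ and $b_1\leq m_2=k$. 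So every $\chi_{\underline{a},\underline{b},0}$ is nontrivial, forcing the whole sum \eqref{4.12} to vanish, hence $\mathcal{F}_{\psi_{(n^{2k})}}$ is identically zero on $\pi$. This contradicts $\mathcal{O}(\pi)=(n^{2k})$ via Theorem \ref{thm 1.6}, completing the inductive step and thereby the proof.
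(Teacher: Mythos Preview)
Your proposal is correct and follows essentially the same approach as the paper's proof: induction on $k$ with the Keys--Shahidi result as base case, the descent $\mathcal{D}_{\psi,2n}$ together with the inductive hypothesis to force $\mathcal{D}_{\psi,2n}(\pi)=0$, then Proposition~\ref{prop 4.3} and the character analysis \eqref{4.14}--\eqref{4.17} at $s_0=0$ to reach a contradiction. Two minor remarks: the appeal to Proposition~\ref{prop 2.4} is unnecessary here (you only need that the descended series \emph{is} an Eisenstein series of the smaller type, not that every such series arises this way), and Proposition~\ref{prop 4.3} is stated for a pole $s_0$ with $\Re(s_0)>0$, so, as the paper does, you should note that its proof goes through verbatim for the value at $s=0$.
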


\begin{proof}
We know that $E(f_{\Delta(\tau,(k,k)),s})$ is holomorphic at $s=0$. This is a general fact, since $\tau$ is unitary. See \cite{MW95}, IV.1.11(b). We will prove the theorem by induction on $k$. If $k=1$, this follows from a result of Keys and Shahidi, stating that, in this case, $M_0=-id$, where $M_s: \tau|\cdot|^s\times\tau|\cdot|^{-s}\rightarrow \tau|\cdot|^{-s}\times\tau|\cdot|^s$ is the intertwining operator corresponding to $\begin{pmatrix}&I_n\\I_n\end{pmatrix}$. See \cite{KS88}, Prop. 6.3. We have the functional equation (\cite{MW95}, IV.1.10)
$$
E(f_{\Delta(\tau,(1,1),s)})=E(M_s(f_{\Delta(\tau,(1,1),s)})).
$$
We conclude that $E(f_{\Delta(\tau,(1,1),0)})=E(M_0(f_{\Delta(\tau,(1,1),0)}))=-E(f_{\Delta(\tau,(1,1),0)})$ and hence $E(f_{\Delta(\tau,(1,1),0)})=0$.

Let $k\geq 2$. Assume that our Eisenstein series is not identically zero, at $s=0$. Let $\pi$ be the representation of $\GL_{2nk}(\BA)$, acting by right translations in the space  generated by all the $E(f_{\Delta(\tau,(k,k)),0})$. Consider $\mathcal{D}_{\psi,2n}(\pi)\circ r$. By Theorem \ref{thm 2.2}, up to $|\det\cdot|^{\frac{2n-1}{2}}$, this is an Eisenstein series at $s=0$, corresponding to $\Delta(\tau,k-1)|\cdot|^s\times \Delta(\tau,k-1)|\cdot|^{-s}$. By induction, $\mathcal{D}_{\psi,2n}(\pi)=0$. The proof of Prop. \ref{prop 4.3} applies here in exactly the same way, and we conclude that $\mathcal{O}(\pi)=(n^{2k})$. Now, we get \eqref{4.5} with $\varphi= E(f_{\Delta(\tau,(k,k)),0})$.	We use the same notation. For all $c_1,...,c_{2k}\in \BA^*$, such that $c_1\cdots c_{2k}=1$, 
\begin{equation}\label{5.1}
\mathcal{F}_{\psi_{n^{2k}}}(\varphi)(\delta_n(c)g)=\mathcal{F}_{\psi_{n^{2k}}}(\varphi)(g).	
\end{equation}
As in \eqref{4.7}, we get	
\begin{equation}\label{5.2}
\mathcal{F}_{\psi_{n^{2k}}}(E(f_{\Delta(\tau,(k,k)),0}))(g)=
\int\limits_{Y(\BA)}\mathcal{Z}_{\psi,n,2k}(E(f_{\Delta(\tau,(k,k)),0}))(yw_0g)dy.
\end{equation}
As in \eqref{4.8}, denote by $D_{0,\psi_{n^{2k}},\underline{a},\underline{b}}(f_{\underline{s}^0,\underline{\zeta}^0})$ the value at $s=0$ of\\ $Res_{\underline{s}=\underline{s}^0,\underline{\zeta}=\underline{\zeta}^0}M^{\psi_{Z^{2k}_n}}_{w^0w_{n\cdot(\underline{a},\underline{b})},\underline{s},\underline{\zeta},s}(f_{\underline{s}+s\cdot\underline{1},\underline{\zeta}-s\cdot\underline{1}})$.
Then 
\begin{equation}\label{5.3}
\mathcal{F}_{\psi_{n^{2k}}}(\varphi)(\delta_n(c)g)=\sum\limits_{(\underline{a},\underline{b})}\int\limits_{Y(\BA)}D_{0,\psi_{n^{2k}},\underline{a},\underline{b}}(f_{\underline{s}^0,\underline{\zeta}^0})(yw_0\delta_n(c)g)dy
\end{equation}
is independent of $c_1,...,c_{2k}\in \BA^*$, such that $c_1\cdots c_{2k}=1$. As in \eqref{4.12},
\begin{equation}\label{5.4}
\sum\limits_{(\underline{a},\underline{b})}\chi_{\underline{a},\underline{b},0}(c)\int\limits_{Y(\BA)}D_{0,\psi_{n^{2k}},\underline{a},\underline{b}}(f_{\underline{s}^0,\underline{\zeta}^0})(yw_0g)dy
\end{equation}
is independent of $c_1,...,c_{2k}\in \BA^*$, such that $c_1\cdots c_{2k}=1$. All the characters (in $c_1,...,c_{2k-1}$) $\chi_{\underline{a},\underline{b},0}$ are nontrivial. This is clear from \eqref{4.14}-\eqref{4.17}, replacing $s_0$ there by zero. For example, in the case of \eqref{4.17}, $n_1(\underline{a},\underline{b},0)=n(\frac{3m}{2}-(a_r+b_1))\geq \frac{mn}{2}$. As we explained in \eqref{4.13}, it follows, by linear independence of characters, that \eqref{5.4} is zero, and then, from \eqref{5.1}- \eqref{5.3}, $\mathcal{F}_{\psi_{n^{2k}}}$ is zero on $\pi$. This is a contradiction, since $\mathcal{O}(\pi)=(n^{2k})$. This proves the theorem.
\end{proof}

\section{Top Fourier coefficients of $Res_{s=\frac{m}{4}-\frac{i}{2}}F(f_{\Delta(\tau,(m_1,m_2)),s})$}

Let $0\leq i\leq min(m_1,m_2)-1$. Denote by $\pi_i=\pi_i^{(m_1,m_2)}(\tau)$ the representation (by right translations) of $\GL_{mn}(\BA)$ in the space generated by the residues $Res_{s=\frac{m}{4}-\frac{i}{2}}E(f_{\Delta(\tau,(m_1,m_2)),s})$. 
\begin{thm}\label{thm 6.1}	
We have
$$
\mathcal{O}(\pi_i)=((2n)^i,n^{m-2i}).
$$
\end{thm}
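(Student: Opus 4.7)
The plan is to prove Theorem \ref{thm 6.1} by induction on $m=m_1+m_2$, treating $i=0$ as a separate base case and using descent for $i\geq 1$.

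\textbf{Case $i=0$.} This case is handled directly, without the induction. At the right-most pole $s=m/4$, the residue representation $\pi_0$ is concentrated on $P_{n^m}$ and the constant term analysis of Section 3.1 shows that its cuspidal support is the Speh-type datum $(\tau\lvert\cdot\rvert^{(m-1)/2},\ldots,\tau\lvert\cdot\rvert^{(1-m)/2})$. The proof that $\mathcal{O}(\pi_0)=(n^m)$ is then exactly the argument of Proposition \ref{prop 4.3}: one first shows that $\mathcal{D}_{\psi,2n}(\pi_0)=0$ (otherwise, by Theorem \ref{thm 2.2} and Proposition \ref{prop 2.4}, $s=m/4$ would be a pole of an Eisenstein series attached to $\rho_{\Delta(\tau,(m_1-1,m_2-1)),s}$, but by Theorem \ref{thm 3.1} that series has right-most pole at $(m-2)/4<m/4$); then every partition $\underline{p}\in\mathcal{O}(\pi_0)$ must be bounded by $(n^m)$, since a part $n'$ with $n<n'<2n$ is ruled out by cuspidality of $\tau$, a part $\geq 2n$ is ruled out by the vanishing of $\mathcal{D}_{\psi,2n}$, and $\underline{p}\geq(n^m)$ holds because $\mathcal{Z}_{\psi,n,m}$ is nontrivial on $\pi_0$ (generic $\tau$ and Theorem \ref{thm 1.6}).

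\textbf{Case $i\geq 1$.} Here I would induct on $m$. Assume the theorem for all $(m_1',m_2')$ with $m_1'+m_2'<m$. Observe the key identity
\begin{equation*}
\frac{m_1+m_2}{4}-\frac{i}{2}\;=\;\frac{(m_1-1)+(m_2-1)}{4}-\frac{i-1}{2},
\end{equation*}
so $s(i)$ for $(m_1,m_2)$ coincides with $s(i-1)$ for $(m_1-1,m_2-1)$. By Theorem \ref{thm 2.2}, Theorem \ref{thm 2.3} (holomorphy of $\Lambda$ in the relevant range, after possibly swapping the roles of $m_1,m_2$ and using Theorems \ref{thm 2.6}, \ref{thm 2.7} as in the proof of Theorem \ref{thm 4.1}), and Proposition \ref{prop 2.4} (surjectivity/density of the section map $f\mapsto\Lambda(f)$), we obtain, after taking residues at $s(i)$,
\begin{equation*}
\mathcal{D}_{\psi,2n}(\pi_i^{(m_1,m_2)}(\tau))\circ r\;=\;\lvert\det\cdot\rvert^{(2n-1)/2}\cdot\pi_{i-1}^{(m_1-1,m_2-1)}(\tau),
\end{equation*}
which is nonzero by Theorem \ref{thm 4.1}. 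By induction, $\mathcal{O}(\pi_{i-1}^{(m_1-1,m_2-1)})=((2n)^{i-1},n^{m-2i})$.

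\textbf{Upper bound on $\mathcal{O}(\pi_i)$.} Any $\underline{p}\in\mathcal{O}(\pi_i)$ is bounded by $((2n)^{\min(m_1,m_2)},n^{\lvert m_1-m_2\rvert})$ by \eqref{2.2}. I would sharpen this to $\underline{p}\leq((2n)^i,n^{m-2i})$ as follows: repeating the exclusion argument of Proposition \ref{prop 4.3}, no part of $\underline{p}$ lies strictly between $n$ and $2n$ (cuspidality of $\tau$), and no part exceeds $2n$ (by Theorem 1.1 of \cite{LX20} applied to the inducing data). If $\underline{p}=((2n)^j,n^{m-2j})$ with $j>i$, then by the ``restriction'' behaviour of Fourier coefficients under $\mathcal{D}_{\psi,2n}$ (proved via root exchange, analogous to Proposition \ref{prop 1.4} and Theorem \ref{thm 1.6}), a nontrivial $\mathcal{F}_{\psi_{((2n)^j,n^{m-2j})}}$ on $\pi_i$ would produce a nontrivial $\mathcal{F}_{\psi_{((2n)^{j-1},n^{m-2j})}}$ on $\mathcal{D}_{\psi,2n}(\pi_i)$, contradicting $\mathcal{O}(\pi_{i-1}^{(m_1-1,m_2-1)})=((2n)^{i-1},n^{m-2i})$ once $j-1>i-1$.

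\textbf{Lower bound.} By the induction hypothesis applied to the descent, the Fourier coefficient for $((2n)^{i-1},n^{m-2i})$ is nontrivial on $\mathcal{D}_{\psi,2n}(\pi_i)$. Using the same exchange-of-roots procedure (a global analog of how Bernstein--Zelevinsky derivatives compose with nilpotent Fourier coefficients), this nontriviality is lifted to nontriviality of $\mathcal{F}_{\psi_{((2n)^i,n^{m-2i})}}$ on $\pi_i^{(m_1,m_2)}$. Combined with the upper bound, this yields $\mathcal{O}(\pi_i)=((2n)^i,n^{m-2i})$.

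\textbf{Main obstacle.} The delicate step is the lifting argument in the lower bound: one needs an explicit identity relating the Fourier coefficient on $\pi_i$ attached to $((2n)^i,n^{m-2i})$ and the one on $\mathcal{D}_{\psi,2n}(\pi_i)\circ r$ attached to $((2n)^{i-1},n^{m-2i})$, together with the fact that maximality of the orbit is preserved under this correspondence. This is precisely the content of Zhang's computation in \cite{Zh22}, which becomes rigorous once combined with the surjectivity/density results for the descended section $\Lambda(f_{\Delta(\tau,\underline{m}),s})$ established in Sections 3.3--3.4 (Propositions \ref{prop 2.4} and \ref{prop 2.8}), ensuring that all of $\pi_{i-1}^{(m_1-1,m_2-1)}$ is seen through the descent.
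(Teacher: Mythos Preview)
Your induction structure, the $i=0$ case, and the lower-bound lifting all match the paper's approach (the paper, for $i=0$, simply notes $\pi_0=|\det\cdot|^{(m_1-m_2)/4}\Delta(\tau,m)$ and cites \cite{G06}, \cite{JL13}; for the lower bound it cites \cite{GRS03}, Lemma~2.6, which is exactly the lifting you describe). The genuine divergence is in the \emph{upper bound}.

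The paper does \emph{not} obtain the bound $\underline{p}\leq((2n)^i,n^{m-2i})$ by descent. Instead it uses a one-line local argument: at a place $\nu$ where $\tau$ is unramified, the two Zelevinsky segments
\[
\Bigl[\tfrac{1-m_1}{2}+s(i),\ \tfrac{m_1-1}{2}+s(i)\Bigr],\qquad
\Bigl[\tfrac{1-m_2}{2}-s(i),\ \tfrac{m_2-1}{2}-s(i)\Bigr]
\]
overlap in exactly $i$ points and rearrange (as a multiset) into one segment of length $m-i$ and one of length $i$. Hence the unramified constituent of $\Delta(\tau_\nu,m_1)|\cdot|^{s(i)}\times\Delta(\tau_\nu,m_2)|\cdot|^{-s(i)}$ equals that of $|\det\cdot|^{(m_1-m_2)/4}\bigl(\Delta(\tau_\nu,m-i)\times\Delta(\tau_\nu,i)|\cdot|^{(m_2-m_1)/2}\bigr)$, and \cite{LX20}, Prop.~5.1 immediately gives $\mathcal{O}(\pi_i)\leq((2n)^i,n^{m-2i})$. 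This bypasses entirely the ``main obstacle'' you identify.

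Your descent-based upper bound, by contrast, has gaps as written. The exclusion argument of Proposition~\ref{prop 4.3} (``no part strictly between $n$ and $2n$'') only treats the \emph{leading} part of $\underline{p}$: it goes through $\mathcal{D}^0_{\psi,n'}$ and the constant term along $V_{mn-n',n'}$, which says nothing about deeper parts. You also do not exclude parts $<n$, so the reduction to the shape $((2n)^j,n^{m-2j})$ is not justified. Finally, even granting that shape, the step ``$((2n)^j,n^{m-2j})$ on $\pi_i$ forces $((2n)^{j-1},n^{m-2j})$ on $\mathcal{D}_{\psi,2n}(\pi_i)$'' is the \emph{downward} direction of the orbit/derivative correspondence, which is not what \cite{GRS03}, Lemma~2.6 gives and is not proved in the paper. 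So your route would need two extra nontrivial ingredients, whereas the paper's local rearrangement of segments gives the upper bound for free and leaves only the easy upward lifting (\cite{GRS03}, Lemma~2.6) for the lower bound.
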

\begin{proof}
The proof is similar to the proof of Theorem 3.2 in \cite{GS22}. We start with the case $i=0$. Here, one can see that
$\pi_0=|\det\cdot|^{\frac{m_1-m_2}{4}}\Delta(\tau,m)$. It is known that $\mathcal{O}(\Delta(\tau,m))=(n^m)$. See Prop. 5.3 in \cite{G06}. See also \cite{JL13}. Let $0\leq i\leq min(m_1,m_2)-1$. Assume that $m_2\leq m_1$. We show, by induction on $m$, that $\mathcal{F}_{\psi_{((2n)^i,n^{m-2i})}}$ is nontrivial on $\pi_i$. If $m=2$, then necessarily $i=0$, and we took care of this case. Assume that $m>2$. If $m_2=1$, then $i=0$ and we took care of this case. Assume that $m_2>1$, $1\leq i\leq m_2-1$. By Theorem \ref{thm 4.1}, $\mathcal{D}_{\psi,2n}(\pi_i)=|\det\cdot|^{\frac{2n-1}{2}}\pi_{i-1}^{(m_1-1,m_2-1)}(\tau)$. By induction and as we did in the proof of Prop. \ref{prop 4.3}, it follows, exactly as in \cite{GRS03}, Lemma 2.6, that the Fourier coefficient corresponding to $(2n,(2n)^{i-1},n^{(m-2)-2(i-1)})=((2n)^i,n^{m-2i})$ supports $\pi_i$. Examining, at almost all places $\nu$, the unramified constituent at $\nu$ of $\Delta(\tau,m_1)|\cdot|^{\frac{m}{4}-\frac{i}{2}}\times \Delta(\tau,m_2)|\cdot|^{-\frac{m}{4}+\frac{i}{2}}$, we find that it coincides with the unramified constituent at $\nu$ of $|\det\cdot|^{\frac{m_1-m_2}{4}}(\Delta(\tau,m-i)\times \Delta(\tau,i)|\cdot|^{\frac{m_2-m_1}{2}})$. This bounds the elements of $\mathcal{O}(\pi_i)$ by $((2n)^i,n^{m-2i})$. The argument is standard and uses one place $\nu$ where $\tau$ is unramified. See, for example, \cite{LX20}, Prop. 5.1. This proves the theorem. If $m_1<m_2$, the proof is similar, using \eqref{4.3}.

\end{proof}


\begin{thebibliography}{}
	
	
	
	\bibitem[BZ77]{BZ77}
	 I.N. Bernstein and A. Zelevinsky. \textit{Induced representations of reductive p-adic groups. I}. 
	 Annales scientifiques de l’É.N.S. 4e série, tome 10, no. 4 (1977), 441–472.	
	
	
	\bibitem[G06]{G06}
	D. Ginzburg. \textit{Certain conjectures relating unipotent orbits to automorphic representations}. 
	Israel. J. Math. 151 (2006), 323–355.
	
	\bibitem [GRS03]{GRS03}
	D. Ginzburg, S. Rallis, and D. Soudry. \textit{On Fourier coefficients of automorphic forms on symplectic groups}. Manuscripta Math. 111, (2003), 1-16.
	
	\bibitem[GRS11]{GRS11}
	D. Ginzburg, S. Rallis, and D. Soudry. \textit{The descent map from
		automorphic representation of $\GL(n)$ to classical groups.} World
	Scientific (2011).
	
		
	\bibitem[GS22]{GS22}
	D. Ginzburg and D. Soudry. \textit{Top Fourier coefficients of residual Eisenstein series on symplectic or
		metaplectic groups, induced from Speh
		representations}. Res. Number Theory 8, no. 1  (2022), paper no. 10, 25 pp.
	
	\bibitem[GGS17]{GGS17} R. Gomez, D. Gourevitch and S. Sahi. \textit{Generalized and degenerate Whittaker models}. Com-
	positio Math. 153 (2017) 223-256.
	
	\bibitem[HM15]{HM15}
	 M. Hanzer and G. Mui´c. \textit{On the images and poles of degenerate eisenstein series for $\GL(n,\BA_{\BQ})$ and $\GL(n,\BR)$}. 	American Journal of Mathematics, 137(4) (2015), 907–951.
	
	\bibitem[JL13]{JL13}
	D. Jiang, B. Li. \textit{On Fourier coefficients of automorphic forms of GL(n)}. IMRN 17 (2013) 4029–4071.	
	
	\bibitem[KS88]{KS88}
	C.D.Keys and F. Shahidi. \textit{Artin $L$-functions and normalization of intertwining operators.} Ann. Sci. $\acute{E}cole$ Norm. Sup. (4) 21:1 (1988), 67-89.
	
	\bibitem [MW87] {MW87} C. Moeglin and J.-L.Waldspurger, \textit{Modeles de Whittaker d$\acute{e}$g$\acute{e}$n$\acute{e}$r$\acute{e}$s pour des groupes p-adiques}. Math. Z. 196 (1987), no. 3, 427-452.
	
	\bibitem[MW89]{MW89}
	C. Moeglin and J.-L. Waldspurger. \textit{Le spectre $r\acute{e}siduel$ de $GL(n)$.} Ann. Sci. $\acute{E}cole$ Norm. Sup. (4) 22:4 (1989), 605–674.
	
	\bibitem[MW95]{MW95}
		C. Moeglin and J.-L. Waldspurger. \textit{Spectral decomposition and Eisenstein series.} Cambridege Tracts in Math. 113 Cambridge University Press (1995).
	
    \bibitem[LX20]{LX20}
    B. Liu and B. Xu. \textit{On top Fourier coefficients of certain automorphic representations of $\GL_n$}. Manuscripta Math. 164, (2020), 1-22.
    
    \bibitem[Ze80]{Ze80}
    a. Zelevinsky. \textit{Induced representations of reductive p-adic groups.II .On
    irreducible representations of $\GL(n)$}.
    Annales scientifiques de l’É.N.S. 4e série, tome 13, no. 2 (1980),  165-210
    
    \bibitem[Zh22]{Zh22}
    Z. Zhang. \textit{Ana analogue of Bernstein-Zelevinsky derivatives to automorphic forms}. arXiv: 2207.11543 (2022).
    
    \bibitem[Zh23]{Zh23}
    Z. Zhang. \textit{Some combinatorics in the cancellation of poles of Eisenstein Series for $\GL(n,\BA_{\BQ})$}.
    arXiv:2201.12413 (2023).
	
	
	
\end{thebibliography}
\end{document}